\numberwithin{equation}{section}
\newcommand{\R}{\ensuremath{\mathbb{R}}}
\newcommand{\eps}{\ensuremath{\varepsilon}}
\DeclareMathOperator{\sech}{sech}
\newcommand{\sgn}{\ensuremath{\mbox{sgn}}}
\title{The dynamics of disappearing pulses in a singularly perturbed reaction-diffusion system with parameters that vary in time and space}
\author{Robbin Bastiaansen and Arjen Doelman}
\begin{document}

\maketitle

\begin{abstract}
We consider the evolution of multi-pulse patterns in an extended Klausmeier equation -- as generalisation of the well-studied Gray-Scott system, a prototypical singularly perturbed reaction-diffusion equation -- with parameters that change in time and/or space. As a first step we formally show -- under certain conditions on the parameters -- that the full PDE dynamics of a $N$-pulse configuration can be reduced to a $N$-dimensional dynamical system that describes the dynamics on a $N$-dimensional manifold $\mathcal{M}_N$. Next, we determine the local stability of $\mathcal{M}_N$ via the quasi-steady spectrum associated to evolving $N$-pulse patterns. This analysis provides explicit information on the boundary $\partial \mathcal{M}_N$ of $\mathcal{M}_N$. Following the dynamics on $\mathcal{M}_N$, a $N$-pulse pattern may move through $\partial \mathcal{M}_N$ and `fall off' $\mathcal{M}_N$. A direct nonlinear extrapolation of our linear analysis predicts the subsequent fast PDE dynamics as the pattern `jumps' to another invariant manifold $\mathcal{M}_M$, and specifically predicts the number $N-M$ of pulses that disappear during this jump. Combining the asymptotic analysis with numerical simulations of the dynamics on the various invariant manifolds yields a hybrid asymptotic-numerical method that describes the full pulse interaction process that starts with a $N$-pulse pattern and typically ends in the trivial homogeneous state without pulses. We extensively test this method against PDE simulations and deduce a number of general conjectures on the nature of pulse interactions with disappearing pulses. We especially consider the differences between the evolution of (sufficiently) irregular and regular patterns. In the former case, the disappearing process is gradual: irregular patterns loose their pulses one by one, jumping from manifold $\mathcal{M}_k$ to $\mathcal{M}_{k-1}$ ($k = N, \ldots, 1$). In contrast, regular, spatially periodic, patterns undergo catastrophic transitions in which either half or all pulses disappear (on a fast time scale) as the patterns pass through $\partial \mathcal{M}_N$. However, making a precise distinction between these two drastically different processes is quite subtle, since irregular $N$-pulse patterns that do not cross $\partial \mathcal{M}_N$ typically evolve towards regularity.
\end{abstract}

\section{Introduction}

The far from equilibrium dynamics of solutions to systems of reaction-diffusion equations -- patterns -- often has the character of interacting localised structures. This is especially the case when the diffusion coefficients of different components -- species -- in the system vary significantly in magnitude. This property makes the system {\it singularly perturbed}. Such systems appear naturally in ecological models; in fact, the presence of processes that vary on widely different spatial scales is regarded as a fundamental mechanism driving pattern formation in spatially extended ecological systems \cite{RvdK08}. Moreover, while exhibiting behaviour of a richness comparable to general -- non singularly perturbed -- systems, the multi-scale nature of singularly perturbed systems provides a framework by which this behaviour can be studied and (partly) understood mathematically.

In this paper, we consider the interactions of singular pulses in an extended Klausmeier model \cite{SH13,Sherratt-origins,Eric-Striped,Eric-Beyond-Turing},
\begin{equation}	
		\begin{cases}
			U_t & = U_{xx} + h_x U_x + h_{xx} U + a - U - UV^2, \\
			V_t & = D^2 V_{xx} - m V + UV^2,
		\end{cases} \label{eq:extKmodel1}
	\end{equation}
sometimes also called the generalised Klausmeier-Gray-Scott system \cite{Lottes,van2013rise}. This model is a generalization of the original ecological model by Klausmeier on the interplay between vegetation and water in semi-arid regions \cite{klausmeier1999regular} -- which was proposed to describe the appearance of vegetation patterns as crucial intermediate step in the desertification process that begins with a homogeneously vegetated terrain and ends with the non-vegetated bare soil state: the desert -- see \cite{deblauwe2012determinants,BookMeron,Retal04} and the references therein for observations of these patterns and their relevance for the desertification process. In \eqref{eq:extKmodel1}, $U(x,t)$ represents (the concentration of) water and $V(x,t)$ vegetation; for simplicity -- and as in \cite{Lottes,SH13,Sherratt-origins,Eric-Beyond-Turing,van2013rise} -- we consider the system in a 1-dimensional unbounded domain, i.e. $x \in \R$; parameter $a$ models the rainfall and $m$ the mortality of the vegetation. Since the diffusion of water occurs on a much faster scale than the diffusion -- spread -- of vegetation, the system is indeed -- and in a natural way -- singularly perturbed: the diffusion coefficient of water is scaled to 1 in \eqref{eq:extKmodel1}, so that the diffusion coefficient of the vegetation $D$ can be assumed to be small, i.e. $0 < D \ll 1$. The topography of the terrain is captured by the function $h : \R \rightarrow \R$. The derivative $h_x$ is a measure of the slope in \eqref{eq:extKmodel1} -- see Appendix~\ref{A:hx} for a derivation of this effect. Unlike in \cite{klausmeier1999regular}, we allow (some of) the parameters of (\ref{eq:extKmodel1}) to {\it vary in time or space}: we consider topography functions $h$ that may vary in $x$, and -- most importantly -- we study the impact of slow variations -- typically decrease -- in time of the rainfall parameter $a$: by considering $a = a(t)$ we incorporate the effect of changing environmental -- climatological -- conditions into the model. It is crucial for all analysis in this work that if $a(t)$ varies with $t$ it {\it decreases}, i.e. that the external conditions worsen -- see also \cite{SH13,Sherratt-origins,Eric-Striped,Eric-Beyond-Turing}.

\begin{figure}
\begin{subfigure}[t]{0.33 \textwidth}
\begin{center}
\includegraphics[width = \textwidth]{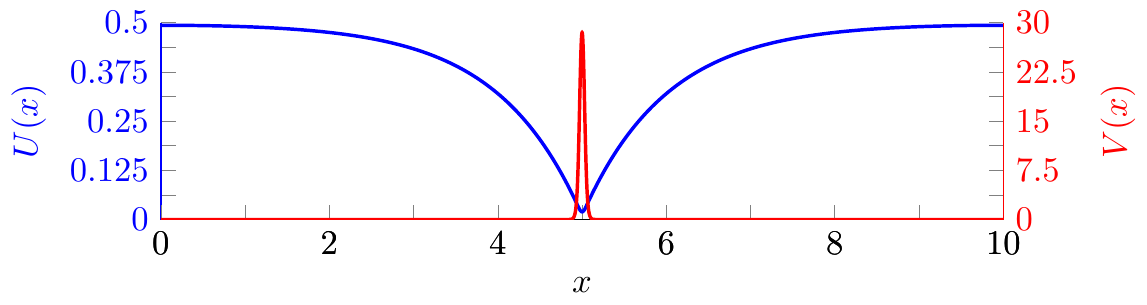}
\caption{A homoclinic $1$-pulse solution.}
\label{fig:intro-Irregular5Pulse}
\end{center}
\end{subfigure}	
\begin{subfigure}[t]{0.33 \textwidth}
\begin{center}
\includegraphics[width = \textwidth]{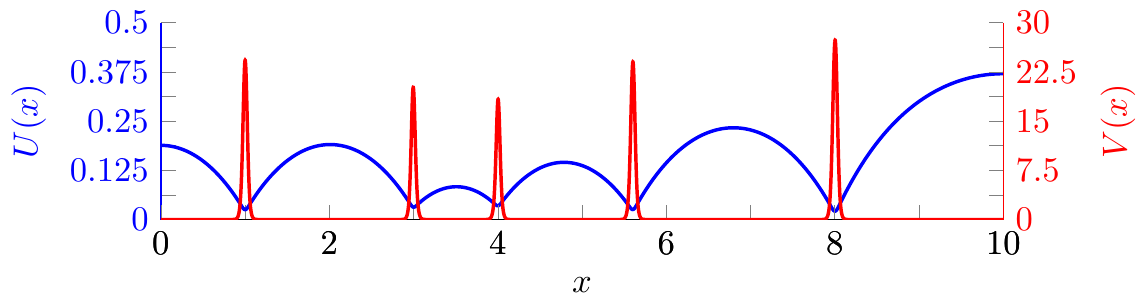}
\caption{An irregular $5$-pulse solution.}
\label{fig:intro-Irregular5Pulse}
\end{center}
\end{subfigure}	
\begin{subfigure}[t]{0.33 \textwidth}
\begin{center}
\includegraphics[width = \textwidth]{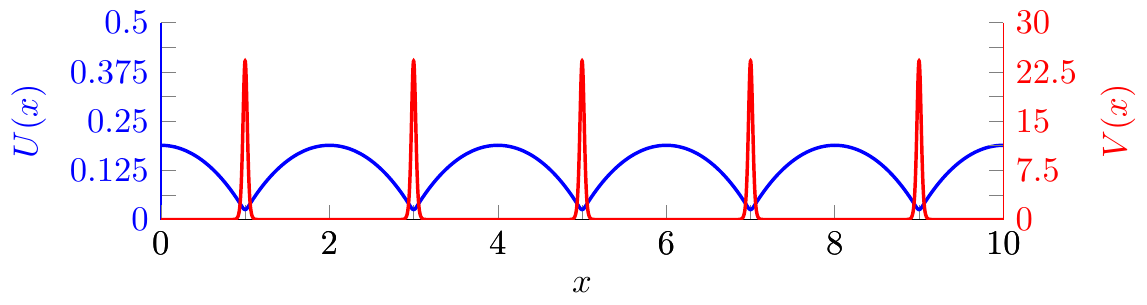}
\caption{A regular $5$-pulse solution.}
\label{fig:intro-Regular5Pulse}
\end{center}
\end{subfigure}	
\caption{Snapshots of several (multi-)pulse solutions of system \eqref{eq:extKmodel1} with $a = 0.5$, $m=0.45$, $h(x) \equiv 0$ and $D = 0.01$.}
\label{fig:intro-PulseSolutions}
\end{figure}

The pulse patterns studied in this paper -- see Figure~\ref{fig:intro-PulseSolutions} for some examples -- correspond directly to localised vegetation `patches'; trivially extending them in an $y$-direction leads to stripe patterns, the dominant structures exhibited by patchy vegetation covers on sloped terrains \cite{deblauwe2012determinants,klausmeier1999regular,Eric-Striped}. The central questions that motivated the research presented in this paper have their direct origins in ecological questions. Nevertheless, this paper focuses on fundamental issues in the dynamics of pulses in singularly perturbed reaction-diffusion systems with varying parameters. The ecological relevance of the insights obtained in the present work are subject of ongoing research. In that sense, the (alternative) name of generalised Klausmeier-Gray-Scott model \cite{Lottes,van2013rise} perhaps is a more suitable name for model (\ref{eq:extKmodel1}) in the setting of the present paper: by setting $h(x) \equiv 0$ -- i.e. in the ecological context of homogeneously flat terrains -- it reduces to the Gray-Scott model \cite{Pea93} that has served as paradigmatic model system for the development of our present day mathematical `machinery' by which pulses in singularly perturbed reaction-diffusion equations can be studied -- see \cite{chen2009oscillatory,dek1siam,dek2siam,doelman1998stability,Kolokolnikov2005PS,Kolokolnikov2005PSinGS} for research on pulse patterns in the Gray-Scott model and \cite{BjornRiccati,Split-Evans,DK03,kolokolnikov2005pulsesplitting,veerman2013pulses} for generalizations.
\\ \\
$N$-pulse patterns are solutions $(U((x,t),V(x,t))$ to~\eqref{eq:extKmodel1}, characterised by $V$-components that are exponentially small everywhere except for $N$ narrow regions in which they `peak': the $N$ pulses -- see Figure \ref{fig:intro-PulseSolutions} and notice that the heights of the pulses typically varies. In the setting of singularly perturbed reaction-diffusion models with constant coefficients, the evolution of $N$-pulse patterns can be regarded -- and studied -- as the {\it semi-strong interaction} \cite{DK03} of $N$ pulses. Under certain conditions -- see below -- the full infinite-dimensional PDE-dynamics can be reduced to an $N$-dimensional system describing the dynamics of the pulse locations $P_1(t) < P_2(t) < \ldots < P_N(t)$ -- see \cite{bellsky2013,chen2009oscillatory,DK03} and the references therein for different (but equivalent) methods for the explicit derivation of this system. Note that the heights of the pulses also vary in time, however, the pulse amplitudes are `slaved' to their (relative) locations. As starting point of our research, we show that this semi-strong pulse interaction reduction method can be -- straightforwardly -- generalised to systems like (\ref{eq:extKmodel1}) in which coefficients vary in time or space. We do so by following the matched asymptotics approach developed by Michael Ward and co-workers -- see \cite{chen2009oscillatory,chen2011stability,Kolokolnikov2005PS,Kolokolnikov2005,Kolokolnikov2005PSinGS,kolokolnikov2005pulsesplitting} and the references therein -- which also means that we apply -- when necessary -- the hybrid asymptotic-numerical approach of \cite{chen2011stability} in which the asymptotic analysis is sometimes `assisted' by numerical methods -- for instance when the `algebra' gets too involved or when a reduced equation can not be solved (easily) `by hand'.

This semi-strong interactions reduction mechanism has been rigorously validated -- by a renormalization group approach based on \cite{Pro02} -- for several specific systems \cite{bellsky2013,DKP,HeijsterFrontInt}. It is established by the approach of \cite{bellsky2013,DKP,HeijsterFrontInt} -- and for the systems considered in these papers -- that there indeed is an approximate $N$-dimensional manifold $\mathcal{M}_N$ (within an appropriately chosen function space in which the full PDE-dynamics takes place) that is attractive and nonlinearly stable and that the flow on $\mathcal{M}_N$ is (at leading order) governed by the equations for the pulse locations $P_j(t)$, $j = 1, ..., N$. However, this validity result only holds if the {\it quasi-steady spectrum} -- see Figure \ref{fig:intro-method:q-s} --  associated to the $N$-pulse pattern can be controlled. The quasi-steady spectrum is defined as the {\it approximate} spectrum associated to a `frozen' $N$-pulse pattern. Due to the slow evolution of the pattern -- and the singularly perturbed nature of the problem -- this spectrum can be approximated explicitly (by methods based on the literature on stationary pulse patterns, see \cite{chen2009oscillatory,veerman2013pulses} and the references therein). By considering (slow) time as a parameter, the elements of the quasi-steady spectrum trace orbits through the complex plane, driven by the pulse locations $P_j(t)$ and, in the case of (\ref{eq:extKmodel1}), by the slowly changing value of $a(t)$. The manifold $\mathcal{M}_N$ is attractive only when this spectrum is in the left half of the complex plane: the proof of the validity result breaks down when there is no spectral gap of sufficient width between the quasi-steady spectrum and the imaginary axis. Thus, the quasi-steady spectrum -- approximately -- determines a boundary of $\mathcal{M}_N$.

\begin{figure}
\centering
\begin{subfigure}[t]{0.27 \textwidth}
\begin{center}
\includegraphics[width = \textwidth]{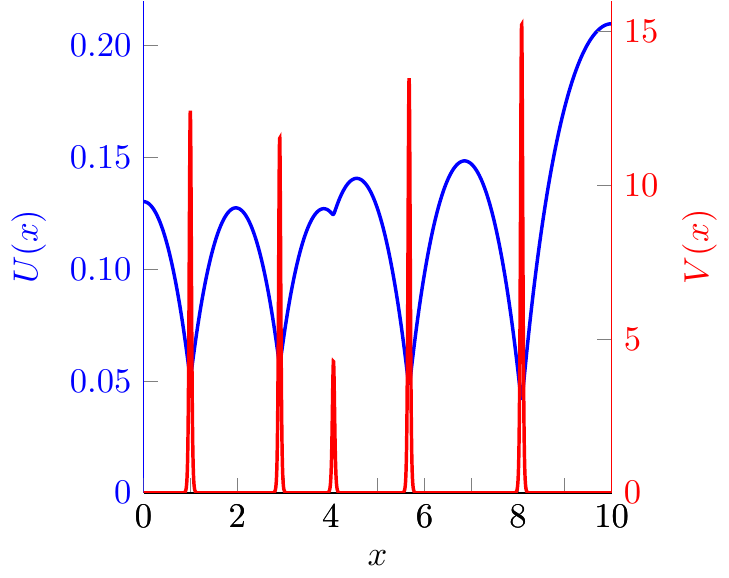}
\caption{Before disappearance.}
\label{fig:intro-method:before}
\end{center}
\end{subfigure}
\begin{subfigure}[t]{0.27 \textwidth}
\begin{center}
\includegraphics[width = \textwidth]{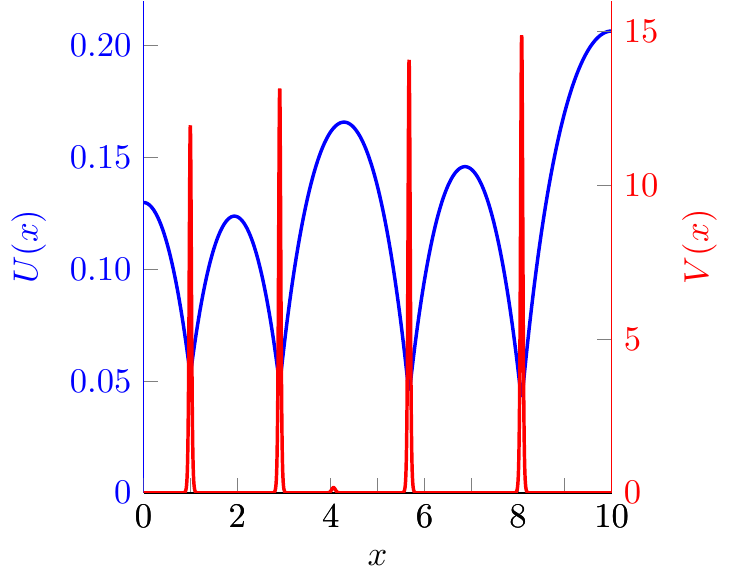}
\caption{After disappearance.}
\label{fig:intro-method:after}
\end{center}
\end{subfigure}
\begin{subfigure}[t]{0.20 \textwidth}
\begin{center}
\includegraphics[width = \textwidth]{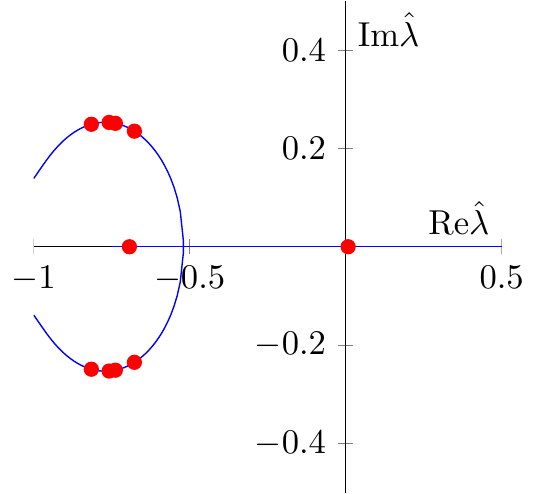}
\caption{Quasi-steady spectrum.}
\label{fig:intro-method:q-s}
\end{center}
\end{subfigure}
\begin{subfigure}[t]{0.24 \textwidth}
\begin{center}
\includegraphics[width = \textwidth]{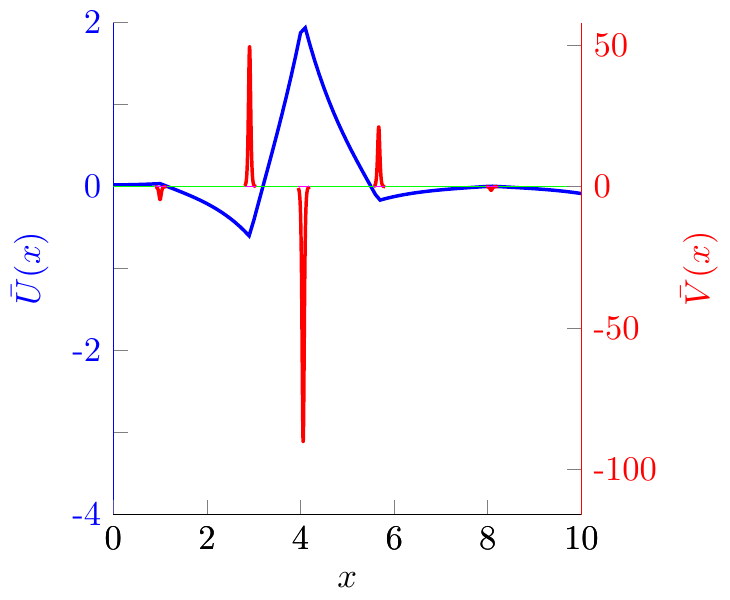}
\caption{The critical eigenfunction.}
\label{fig:intro-method:eigenfunction}
\end{center}
\end{subfigure}
\caption{(a), (b): Simulations of (\ref{eq:extKmodel1}) with $a(t) = 0.5 - 5 \cdot 10^{-4}\ t$, $m = 0.45$, $H = 0$, $L = 10$, $P(0) = (1,3,4,5.6,8)^T$,  $D = 0.01$, just before and after the 3rd pulse disappears (at $a(t) \approx 0.28$). (c): The red dots -- that must travel over the blue `skeleton structure' (section \ref{sec:linstabsections}) -- indicate the analytically determined quasi-steady spectrum associated to the pattern in (a). (d): The (analytically determined) eigenfunction associated to the critical (quasi-steady) eigenvalue in (c).}
\label{fig:intro-method}
\end{figure}

The boundary $\partial \mathcal{M}_N$ in general does not act as a threshold for the flow on $\mathcal{M}_N$; on the contrary, {\it an evolving $N$-pulse pattern may evolve towards -- and subsequently through -- the boundary $\partial \mathcal{M}_N$} -- as elements of the quasi-steady spectrum travel towards the imaginary axis. Or equivalently, in the case of parameters that vary in time, {\it the boundary $\partial \mathcal{M}_N$ may evolve towards the pulse pattern}.
\\ \\
In this paper, we do {\it not} consider the issue of the rigorous validation of the semi-strong reduction method -- although we do remark that the methods of \cite{bellsky2013} a priori seem sufficiently flexible to provide validity results for $N$-pulse dynamics in (\ref{eq:extKmodel1}) with non-homogeneous parameters (in fact, the results of \cite{bellsky2013} already cover specific parameter combinations in (\ref{eq:extKmodel1}) -- with $a$ constant and $h(x) \equiv 0$ -- see Figure \ref{fig:intro-SizeAssumptionsStability}). Here, we explore -- in as much (formal) analytic detail as possible -- the dynamics of $N$-pulse patterns near {\it and beyond} the boundary of the (approximate) invariant manifold $\mathcal{M}_N$. In other words, we intentionally consider situations in which we know that the rigorous theory cannot hold. As noted above, this is partly motivated by ecological issues: the final steps in the process of desertification are -- conceptually -- governed by interacting pulses -- vegetation patches. Under worsening climatological circumstances, these patches may either `disappear' in a gradual fashion -- patches wither and turn to bare soil {\it one by one} -- or  catastrophically -- all patches in a large region disappear {\it simultaneously} -- see \cite{BHM12,HW10,Retal04,Eric-Beyond-Turing} and the references therein. These types of transitions correspond to $N$-pulse patterns crossing through different components of the boundary $\partial \mathcal{M}_N$ of $\mathcal{M}_N$: the nature of these components of $\partial \mathcal{M}_N$ -- and especially the associated dynamics of pulse patterns crossing through the component -- clearly varies significantly. This leads us directly to the mathematical themes we explore here,
\\ \\
{\it Is it possible to analytically follow an $N$-pulse pattern as it crosses the boundary of a manifold $\mathcal{M}_N$? Can we predict the $M$-pulse pattern that emerges as the pattern `settles' on a lower dimensional manifold $\mathcal{M}_{M}$ -- and especially the value $M < N$? More specifically, can we distinguish between $N$-pulse patterns for which $M = 0$ (a catastrophic regime shift), $M = N/2$ (a period doubling) and $M = N-1$ (a gradual decline)?}
\\ \\
The essence of our approach is represented by Figure~\ref{fig:intro-method}. In Figures~\ref{fig:intro-method:before} and~\ref{fig:intro-method:after} two snapshots of a (full) PDE simulation of a (originally) $5$-pulse pattern is shown, just before and just after the 3rd pulse has disappeared, i.e before it `falls off' $\mathcal{M}_5$ and after it `lands' on $\mathcal{M}_4$. In Figure \ref{fig:intro-method:q-s}, the quasi-steady spectrum associated to the $5$-pulse pattern of Figure \ref{fig:intro-method:before} -- i.e. the pattern close to the boundary of is $\mathcal{M}_5$ -- is shown: as expected, a quasi-steady eigenvalue has approached the imaginary axis. The spectral configuration of Figure \ref{fig:intro-method:q-s} is determined by asymptotic analysis, an analysis that simultaneously provides the (leading order) structure of the (critical) eigenfunction associated to the critical eigenvalue -- see section \ref{sec:linstabsections}. This eigenfunction is given in Figure \ref{fig:intro-method:eigenfunction}. By construction, it describes the leading order structure of the (linearly) `most unstable perturbation' that starts to grow as the pattern passes through $\partial \mathcal{M}_5$. The eigenfunction is clearly localised around the -- disappearing -- 3rd pulse: the analytically obtained structure indicates that the unstable perturbation will mainly affect the 3rd pulse. By {\it formally} extrapolating this observation based on the {\it linear} asymptotic analysis -- i.e. the information exhibited by Figures~\ref{fig:intro-method:q-s} and~\ref{fig:intro-method:eigenfunction} that is based on the state of the 5-pulse pattern before it falls off $\mathcal{M}_5$ -- we are inclined to draw the {\it nonlinear} conclusion that the destabilised 3rd pulse will `disappear' as $\partial \mathcal{M}_5$ is crossed, while the other 4 pulses persist: $M = 4 = N-1$. The PDE-simulation of Figure~\ref{fig:intro-method:after} shows that this linear extrapolation indeed correctly predicts the full dynamics of (\ref{eq:extKmodel1}).

We develop a hybrid asymptotic-numerical method that describes the evolution of an $N$-pulse pattern by the  reduced $N$-dimensional system for the pulse locations $P_j(t)$ as long as the pulse pattern is in the interior of (approximate) invariant manifold $\mathcal{M}_N$. With the pulse locations as input, we (analytically) determine the associated (evolving) quasi-steady spectrum, and thus know whether the pulse configuration indeed is in this interior, i.e. bounded away from $\partial \mathcal{M}_N$. As elements of the quasi-steady spectrum approach the imaginary axis -- i.e. as the pattern approaches $\partial \mathcal{M}_N$ -- the method follows the above described -- relatively simple -- extrapolation procedure: based on the (approximate) structure of the critical eigenfunction(s) corresponding to the critical element(s) of the quasi-steady eigenvalues that end up on the imaginary axis, it is -- automatically -- decided which pulse(s) are eliminated and thus what is the value of $M < N$. Next, the process is continued by following the dynamics of the $M$-pulse configuration on $\mathcal{M}_M$, that has the locations of the $M$ remaining pulses as $\partial \mathcal{M}_N$ is crossed as initial conditions. Etcetera. Thus, this method provides a formal way to follow the PDE dynamics of an evolving $N$-pulse pattern throughout the `desertification' process of disappearing pulses, or -- equivalently -- as the pulse pattern falls off and subsequently lands on a sequence of invariant manifolds $\mathcal{M}_{N_i}$ of decreasing dimension $N_i$.

A priori, one would guess that this method cannot work -- even if there would be rigorous validation results on the reduced dynamical systems on the finite-dimensional manifolds $\mathcal{M}_{N_i}$. First, one can in principle not expect that the structure of the most critical eigenfunction always is as clear-cut as in Figure \ref{fig:intro-method:eigenfunction}: a priori one expects that the `automatic' decision on which pulse(s) to eliminate -- and thus how many -- must be incorrect in many situations. Moreover, it is not at all clear that the (fast) nonlinear dynamics that takes the pattern from $\mathcal{M}_N$ to $\mathcal{M}_M$ indeed only eliminates these `most vulnerable pulses'. For instance, if the destabilization is induced by a pair of complex conjugate (quasi-steady) eigenvalues, our method automatically assumes that the associated `quasi-steady Hopf bifurcation' is {\it subcritical} -- i.e. that there is no (stable) periodic oscillating pulse behaviour beyond the bifurcation; in fact, even if the bifurcation is subcritical, our method implicitly assumes that the oscillating process by which the affected pulse disappears is so fast, that it does not influence the other pulses and thus can be completely neglected.

Nevertheless, we found that this method is remarkably successful. Figure~\ref{fig:intro-SixPulsesOnHill:PDE} shows a full PDE simulation of a 5-pulse configuration `moving uphill', i.e. extended Klausmeier model~\eqref{eq:extKmodel1} in the (Klausmeier) setting of a constant slope, $h(x) =x$, on a bounded domain (with homogeneous Neumann boundary conditions). One by one, 3 pulses disappear from the system, eventually leading to a stationary stable 2-pulse pattern. Figure \ref{fig:intro-SixPulsesOnHill:ODE} shows the evolution of the same 5-pulse configuration (at $t=0$) as described by our -- finite-dimensional -- method: the pulse configuration `jumps' from $\mathcal{M}_5$ to $\mathcal{M}_4$ and $\mathcal{M}_3$, eventually settling down in a stable critical point of the 2-dimensional dynamical system that governs the flow on $\mathcal{M}_2$. This is quite a slow -- and nontrivial -- process and it takes quite a long time before the system reaches equilibrium, nevertheless, the ODE reduction method not only provides a qualitatively correct picture, it is remarkably accurate in a quantitative sense.

This latter observation is even more remarkable, since our approach is by an asymptotic analysis and thus based on the assumption that a certain parameter -- or parameter combination -- is `sufficiently small'. Nevertheless our methods remain valid for `relatively large values' of the `asymptotically small parameter'. This is not atypical for asymptotically derived insights. It yields another motivation to indeed set out to obtain rigorous results on the dynamics of systems like (\ref{eq:extKmodel1}): in practice, such results are expected to be relevant way beyond the necessary `for $\eps$ sufficiently small' caveat.

\begin{figure}
\begin{center}
\begin{subfigure}[t]{0.4 \textwidth}
\begin{center}
\includegraphics[width = \textwidth]{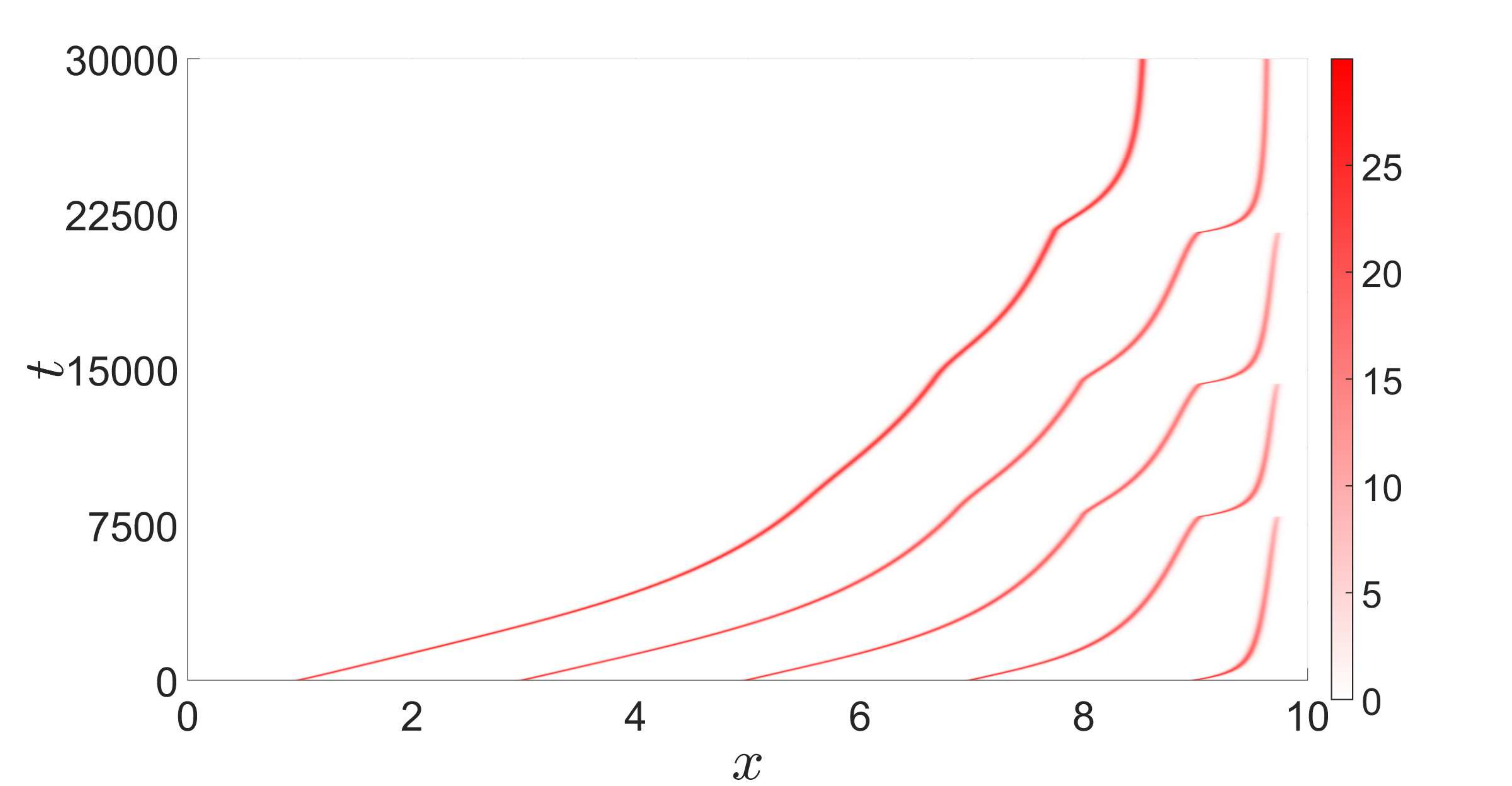}
\caption{PDE simulation.}
\label{fig:intro-SixPulsesOnHill:PDE}
\end{center}
\end{subfigure}
\begin{subfigure}[t]{0.4 \textwidth}
\begin{center}
\includegraphics[width = \textwidth]{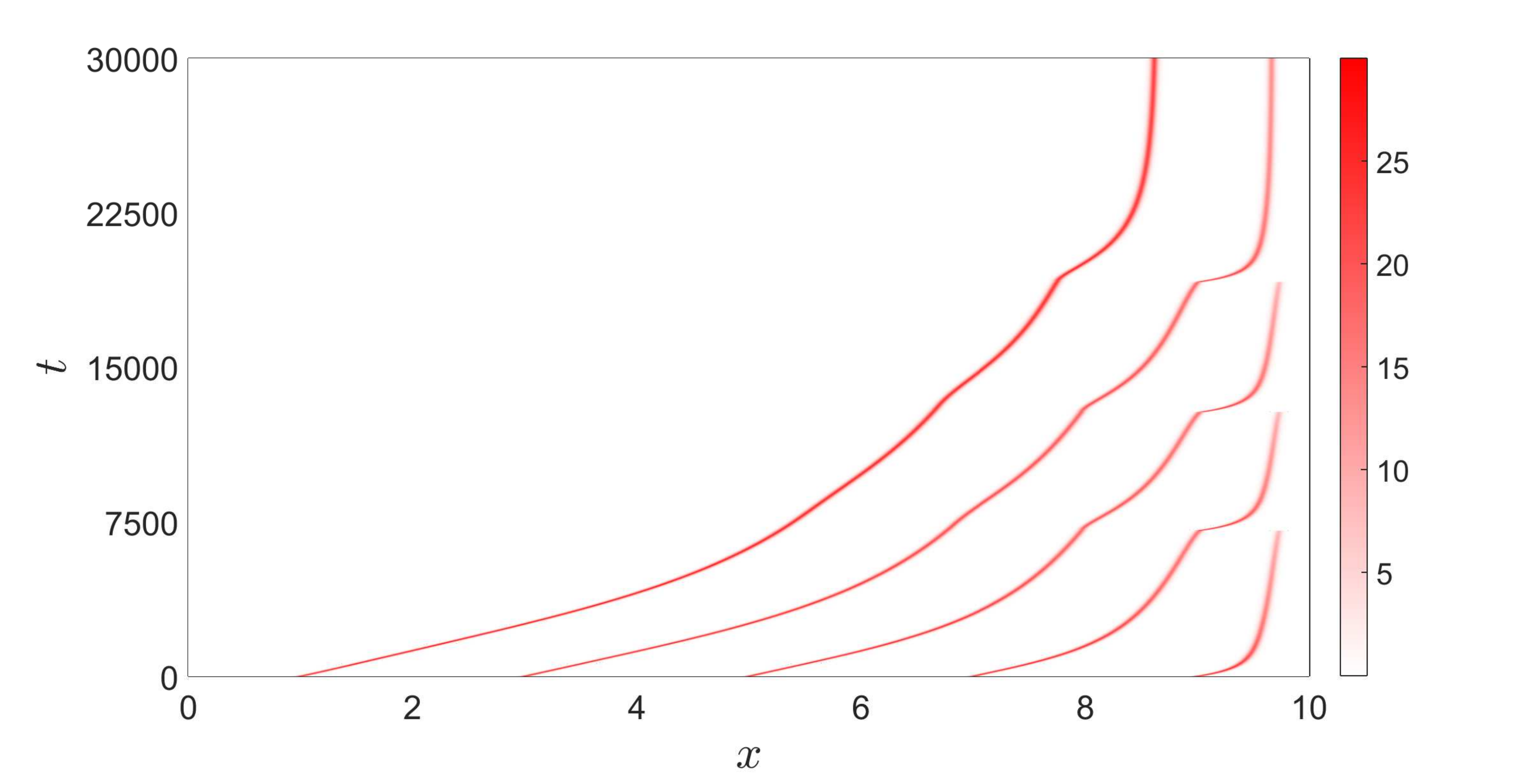}
\caption{ODE simulation.}
\label{fig:intro-SixPulsesOnHill:ODE}
\end{center}
\end{subfigure}
\caption{The evolution of a 5-pulse pattern in the extended Klausmeier model~\eqref{eq:extKmodel1} represented by the locations of the pulses (with $a = 0.5$, $m = 0.45$, $h(x) =x$, $D = 0.01$, $L = 10$). (a) A full PDE simulation. (b) The hybrid asymptotic-numerical ODE method developed in this work.}
\label{fig:intro-SixPulsesOnHill}
\end{center}
\end{figure}

The end-goal of the numerical simulations we present -- see section \ref{sec:NumericalSimulations} -- is to test our method, both to get a (formal) insight in its limitations, as well as to isolate typical behaviour of pulse configurations that may be formulated as conjectures -- i.e. as challenges for the development of the theory. As an example, we mention the `generalised Ni conjecture' \cite{Ddestab,Ni-conjecture} of section \ref{sss:irregular} (for systems with $h(x) \equiv 0$): {\it When a multi-pulse pattern is sufficiently irregular, the localised $V$-pulse with the lowest maximum is the most unstable pulse, and thus the one to disappear first}. In fact, one could claim that at a formal level, the evolution of sufficiently irregular $N$-pulse patterns can be understood by (successive applications of) this conjecture -- and thus be described accurately by our reduction method. However, even when the initial conditions form an irregular $N$-pulse pattern, the situation becomes more complex than that, since the reduced $N$-dimensional dynamics typically evolve towards a critical point on $\mathcal{M}_N$. In fact, our study indicates that $N$-pulse patterns (on bounded domains) always evolve to one specific configuration -- in the Gray-Scott setting of flat terrains, i.e. $h(x) \equiv 0$, this is a regularly spaced (spatially periodic) $N$-pulse pattern. The final pattern is less regular if $h(x) \not \equiv 0$ -- see the stable 2-pulse pattern of Figure~\ref{fig:intro-SixPulsesOnHill}.

The evolution towards spatially periodic patterns induces a mechanism that challenges our method. For irregular patterns, the elements of the quasi-steady spectrum typically `spread out' (over a certain skeleton structure, see Figure \ref{fig:intro-method:q-s} and section \ref{sec:linstabsections}). However, these elements might cluster together as the pattern becomes more and more regular (which agrees with the spectral analysis of spatially periodic patterns in Gray-Scott/Klausmeier type models, see \cite{doelman1998stability,Lottes}). Therefore, it gets harder to isolate the critical (quasi-steady) eigenvalue that induces the destabilization. Moreover, the structure of the associated eigenfunctions also changes significantly: in the irregular setting these have a structure that is centered around one well-defined pulse location (as in Figure \ref{fig:intro-method:eigenfunction}) -- which makes them very suitable for the application of our method; in the periodic case, the eigenfunctions have a more global structure. Nevertheless, as the regularised $N$-pulse pattern approaches the boundary of $\mathcal{M}_N$, two most critical quasi-steady eigenvalues can be distinguished  -- i.e. there typically are two (quasi-steady) eigenvalues that may cause the destabilization. The associated two critical eigenfunctions are also (almost) periodic, either with the same period of the underlying pattern, or with twice that period -- which is in agreement with analytical insights in the destabilization mechanisms of `perfect' spatially periodic patterns \cite{BjornEigenvalues,Ddestab,Hopf-Dances} (see also the two conjectures in section~\ref{NUM-flatTerrain-REG}). These critical eigenfunctions are plotted in Figure~\ref{fig:intro-eigenfunctions-REG} for a stationary regular 2-pulse pattern for $h(x) \equiv 0$ and $a$ fixed near its bifurcation value -- i.e. in the classical constant coefficients setting of (\ref{eq:extKmodel1}). The eigenfunction in Figure \ref{fig:intro-FullCollapse} has the same periodicity as the underlying pattern, it represents the catastrophic `full collapse' scenario in which all pulse disappear simultaneously. Of course, this statement is once again a fully nonlinear extrapolation of completely linear insight, but it is -- once again -- backed up by our numerical simulations: also in the regular case, the linear mechanisms are good predictors for the fast transitions between invariant manifolds.

This nonlinear extrapolation of a linear mechanism also works for the other critical eigenfunction represented by Figure \ref{fig:intro-PeriodDoubling}, which induces a period doubling bifurcation in which half of the pulses of an $N$ pulse pattern disappear. However, in this case -- that is quite dominant in simulations of desertification scenarios \cite{Eric-Striped,Eric-Beyond-Turing} -- our method faces an intrinsic problem, that gets harder the more regular the pattern becomes: if the number of pulses $N$ is odd, our method predicts that `half of the pulses' disappear, but it cannot decide whether the $N$-pulse configuration jumps from $\mathcal{M}_N$ to $\mathcal{M}_{(N+1)/2}$ -- in which all $(N-1)/2$ even numbered pulses disappear -- or from $\mathcal{M}_N$ to $\mathcal{M}_{(N-1)/2}$ -- in which the even numbered pulses are the surviving ones. A similar problem occurs in the jump from $\mathcal{M}_N$ to $\mathcal{M}_{N/2}$ for $N$ even: our method cannot predict whether the even or the odd numbered pulses survive. Nevertheless, also in this case our method is doing better than could be expected; moreover, also in direct PDE simulations, the resolution of this parity issue seems extremely sensitive on initial conditions.
\\ \\
The set-up of this paper is as follows. In section~\ref{sec:existence}, we first perform the PDE to ODE reduction for $N$-pulse patterns in (\ref{eq:extKmodel1}) with -- in its most general setting --  $a=a(t)$ varying in time and $h=h(x)$ varying in space (on unbounded domains and on bounded domains with various kinds of boundary conditions). As a result we obtain explicit expressions for the $N$-dimensional -- or $N-1$-dimensional\footnote{On unbounded domains or domains with periodic boundary conditions the ODE is essentially $N-1$-dimensional, as only the distances between the pulses is relevant, thus reducing the dimension by $1$.} -- systems that describe the evolution of the pulse locations $P_j(t)$, and thus of the $N$-pulse pattern on $\mathcal{M}_N$. Subsequently, the flow on $\mathcal{M}_N$ is studied -- the critical points and their characters are determined analytically; as a consequence, the special role of the spatially periodic patterns -- as attractive fixed points -- can be identified. These results need to be supplemented with an analysis of the stability of the manifold $\mathcal{M}_N$, especially since the analysis of section ~\ref{sec:existence} is not equipped to distinguish the boundaries of $\mathcal{M}_N$ -- i.e. it ignores the process of pulse patterns falling off $\mathcal{M}_N$. This is the topic of section~\ref{sec:linstabsections} in which $N$-pulse solution are frozen and their quasi-steady spectrum -- and thus the boundary of $\mathcal{M}_N$ -- is determined. A central part of the analysis is dedicated to determining the skeleton structure on -- or better: near -- which the quasi-steady eigenvalues must lie (see Figure \ref{fig:intro-method:q-s}). Moreover, the (linearised) nature of the bifurcations that occur when specific components of $\partial \mathcal{M}_N$ are crossed is studied. Next, in section~\ref{sec:NumericalSimulations}, we first numerically check the validity of our asymptotic analysis, then set up our hybrid asymptotic-numerical method -- based on the analysis of sections \ref{sec:existence} and \ref{sec:linstabsections} -- and subsequently extensively test its `predictions' against full PDE-simulations. We find that the asymptotic analysis is correct for parameter values beyond the reaches of current rigorous theory. Moreover, we observe that our method -- that is based on direct extrapolations of linear insights -- works better than a priori could be expected, but also couple this to a search for the limitations of this approach. Based on these tests and simulations, we formulate general conjectures on the nature of multi-pulse dynamics generated by models as (\ref{eq:extKmodel1}). Finally, we briefly discuss the implications of our findings and indicate future lines of research in the concluding section \ref{sec:disc}.

\begin{figure}
\begin{subfigure}[t]{0.49 \textwidth}
\begin{center}
\includegraphics[width = \textwidth]{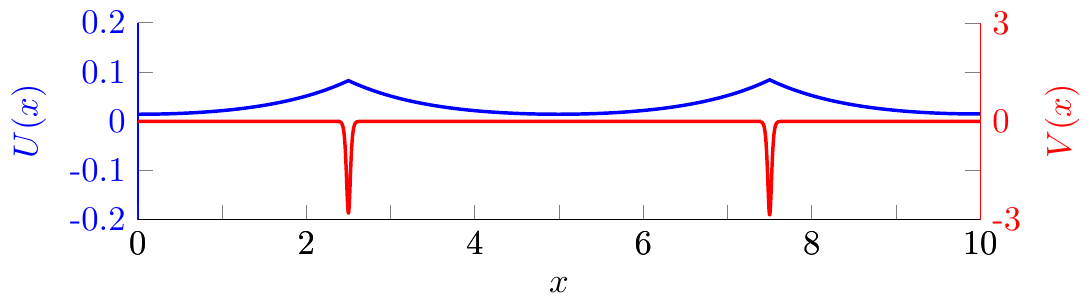}
\caption{A full collapse eigenfunction (eigenvalue $\hat{\lambda} = -0.087$).}
\label{fig:intro-FullCollapse}
\end{center}
\end{subfigure}	
\begin{subfigure}[t]{0.49 \textwidth}
\begin{center}
\includegraphics[width = \textwidth]{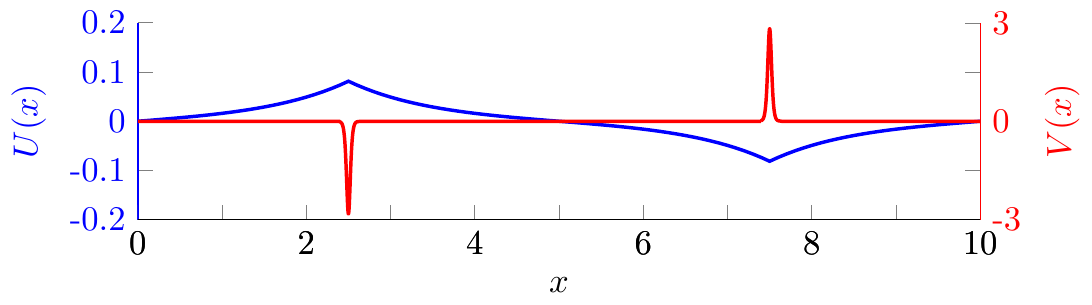}
\caption{A period doubling eigenfunction (eigenvalue $\hat{\lambda} = -0.063$).}
\label{fig:intro-PeriodDoubling}
\end{center}
\end{subfigure}	
\caption{The 2 critical eigenfunctions of a regular $2$-pulse pattern of extended Klausmeier model (\ref{eq:extKmodel1}) on a domain with periodic boundaries with $a \equiv 0.19187$ (near bifurcation), $m = 0.45$, $h(x) \equiv 0$ and $D = 0.01$.}
\label{fig:intro-eigenfunctions-REG}
\end{figure}

\subsection{Size assumptions}

The asymptotic analysis presented in this paper does not hold for all magnitudes of the parameters $a$, $m$, $D$ and all height functions $h$. We therefore need to make several assumptions on the (relative) magnitudes of the parameters in (\ref{eq:extKmodel1}). These assumptions are listed here, together with the type of bifurcation that occurs when these assumptions are violated.

\begin{itemize}
	\item[(A1)] $\frac{a^2}{m^2} \ll 1$ [Pulse Splitting bifurcation]
	\item[(A2)] $\frac{D a^2}{m \sqrt{m}} \ll 1$ [Travelling Wave bifurcation]
	\item[(A3)] $\frac{m \sqrt{m} D}{a^2} \ll 1$ [Saddle-Node bifurcation]
	\item[(A4)] $\frac{m^2 D}{a^2} \ll 1$ [Hopf bifurcation]
	\item[(A5)] $\frac{D m \sqrt{m}}{a^2} h_x(x) \ll 1$ and $\frac{a^2}{m^2} \left(\frac{D m \sqrt{m}}{a^2}\right)^2 h_{xx}(x) \ll 1$ for all $x \in \R$ [Saddle-Node bifurcation]
	\item[(A6)] $\frac{m^2 D}{a^2} h_x(x) \ll 1$ for all $x \in \R$ [Hopf bifurcation].
\end{itemize}

Previous studies of the Gray-Scott system indicate the necessity of three size assumptions to ensure the existence of (one-)pulse solutions~\cite{dek2siam,Lottes,chen2009oscillatory}. The assumptions found in those previous studies can be directly linked\footnote{A handy conversion table between different scalings of the Gray-Scott model can be found in~\cite[section 2.2]{Lottes}.} to our assumptions (A1)-(A3). In Figure~\ref{fig:intro-SizeAssumptionsExistence} we have visualised the assumptions on parameters $a$ and $m$ that follow from (A1)-(A3). Asymptotic stability analysis has shown that a pulse solution is stable if it satisfies an additional fourth size assumption, which corresponds to our assumption (A4). We have also visualised the assumptions on $a$ and $m$ that follow from the assumptions (A1)-(A4) in Figure~\ref{fig:intro-SizeAssumptionsStability}. Finally, the assumptions on the height function $h$ in assumptions (A5) and (A6) are new, and include the case studied in~\cite{Lottes} (but are more general). These guarantee that the height function $h$ does not change too rapidly, i.e. $h$ changes on a slower scale than the $V$-pulse does. This ensures that the standard `flat-terrain' (i.e. $h(x) \equiv 0$) existence theory can be reproduced almost directly.

\begin{figure}
	\centering
		\begin{subfigure}[t]{0.35 \textwidth}
			\centering
			\includegraphics[width=\textwidth]{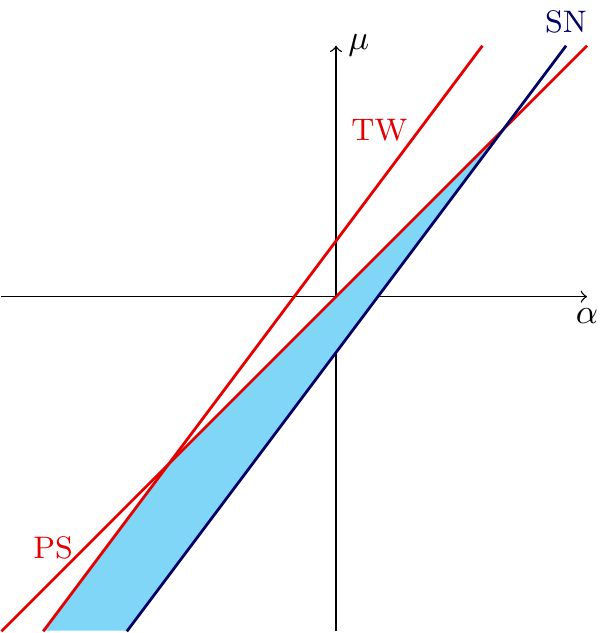}

			\caption{Size assumptions Existence}
			\label{fig:intro-SizeAssumptionsExistence}
		\end{subfigure}
~
		\begin{subfigure}[t]{0.35 \textwidth}
			\centering
			\includegraphics[width=\textwidth]{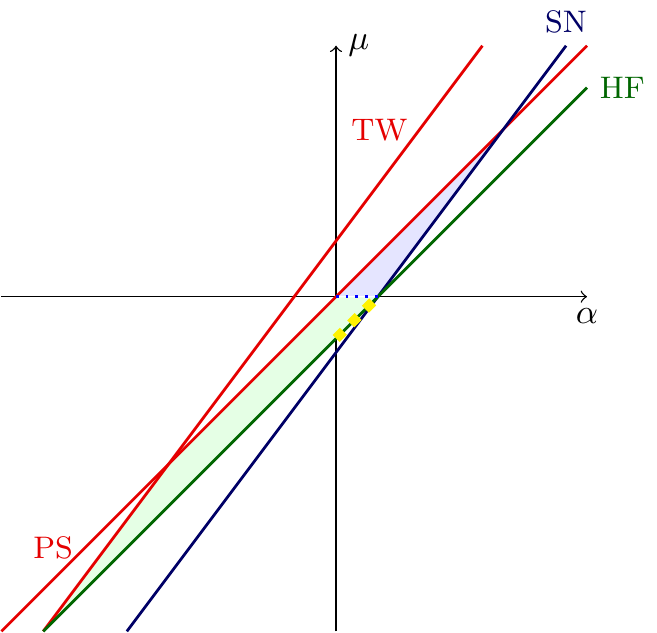}

			\caption{Size assumptions Stability}
			\label{fig:intro-SizeAssumptionsStability}
		\end{subfigure}
	\caption{Graphical summary of size assumptions (A1)-(A3) in (a) and assumptions (A1)-(A4) in (b). Here $\mu$ and $\alpha$ denote the size of $m$ respectively $a$ in order of magnitude of $0 < D \ll 1$. That is, $m = \mathcal{O}(D^\mu)$ and $a = \mathcal{O}(D^\alpha)$. This positions the pulse splitting bifurcation (PS) on the line $\mu = \alpha$, the Travelling-Wave bifurcation (TW) on $\mu = \frac{2}{3}(1+2\alpha)$, the Saddle-Node bifurcation (SN) on $\mu = \frac{2}{3}(2\alpha-1)$ and the Hopf bifurcation (HF) on $\mu = \alpha - \frac{1}{2}$. The coloured-in region in (a) indicate the region in which pulse solutions exist (under the additional assumptions (A5)). The coloured-in regions in (b) indicate the region in which stable pulse solutions exist (under the additional assumptions (A5),(A6)). We have also plotted the line $\mu =0$, which indicates the boundary between the cases $m \gg 1$ and $m \ll 1$ which becomes relevant in the distinction between coupled and decoupled stability problem in our linear stability study in section~\ref{sec:linstabsections}. The dashed yellow line (on the Hopf line) indicates the scaling regime for which validity of the ODE reduction has been proven~\cite{bellsky2013}.}
	\label{fig:intro-SizeAssumptions}
\end{figure}

In principle assumptions (A3) and (A4) can be extended to include the $\mathcal{O}(1)$ cases. In fact, to study the bifurcations that occur when the rainfall $a$ is decreased, it is necessary to include these cases. This leads to the alternative assumptions (A3') and (A4') which are stated below.

\begin{itemize}
	\item[(A3')] $\frac{m \sqrt{m} D}{a^2} \leq \mathcal{O}(1)$
	\item[(A4')] $\frac{m^2 D}{a^2} \leq \mathcal{O}(1)$
\end{itemize}
Note that assumption (A3') corresponds to the so-called `low feed-rate regime' in~\cite{Sun2005, Kolokolnikov2005}

% THE PDE TO ODE REDUCTION

\section{PDE to ODE reduction}\label{sec:existence}

In this section we study the dynamical movement of a $N$-pulse solution to the scaled extended Klausmeier model~\eqref{eq:extKmodel1}. We assume that there are $N$ localised vegetation $V$-pulses at positions $P_1(t) < P_2(t) < \ldots < P_N(t)$, as depicted in Figure~\ref{fig:sketchOuterRegions}. Depending on the domain of our problem we may put additional requirements on the first and last positions (e.g. $0 < P_1(t)$ and $P_N(t) < L$ on the bounded domain $[0,L]$). The positions of the $N$ pulses are not fixed in time. In fact, the $j$-th pulse turns out to move with a time-dependent movement speed $\hat{c}_j(t) = \frac{dP_j(t)}{dt}$ so that its location is given by $P_j(t) = \int_0^t \hat{c}_j(s) ds + P_j(0)$. Our goal is to derive an ODE that describes the evolution of the locations of these pulses, that is, to find expressions for the speeds $\hat{c}_j(t)$. To do so, we first need to find the approximate form of a $N$-pulse solution to~\eqref{eq:extKmodel1}. For this, we divide the domain in several regions: near each pulse we have an inner region and between pulses we have outer regions. Note that in the context of geometric singular perturbation theory these regions are called fast (the inner regions) respectively slow (the outer regions).

We follow the asymptotic approach developed by Michael Ward and co-workers -- see \cite{chen2009oscillatory,chen2011stability,Kolokolnikov2005PS,Kolokolnikov2005,Kolokolnikov2005PSinGS,kolokolnikov2005pulsesplitting} and references therein -- to find approximate solutions in the $N$ inner regions and in the $N+1$ outer regions. In the outer regions we find $V = 0$ and in the inner regions we find $U$ to be constant (both to leading order). A combination of a Fredholm condition and the matching of the inner and outer solution at the pulse locations then gives us the speed of the $j$-th pulse as a function of the solution $U$ in the outer regions \cite{chen2009oscillatory}. The latter is, in the end, determined by $N+1$ linear ODEs that are coupled via internal boundary conditions at all the pulse locations. Therefore we find a pulse-location ODE that depends only on the (current) positions of the pulses. Hence this ODE-description is a reduction of the infinite-dimensional flow of the PDE to a finite-dimensional flow on a $N$-dimensional\footnote{On unbounded domains or domains with periodic boundary conditions this manifold is essentially $N-1$-dimensional, as only the distances between pulses matters, thus reducing the dimension of the manifold by $1$.} manifold $\mathcal{M}_N$ on which $N$-pulses live.

After we have found this ODE description, we study the dynamics of generic $N$-pulse configurations in section~\ref {sec:Pulse-Location-ODE-(A3)} and section~\ref{sec:pulseODE-SaddleNode}. Here the difference between assumption (A3) and (A3') and the need for a hybrid aymptotic-numerics approach becomes apparent: in the former case analytical results can be found, whereas numerics are necessary to study the possibilities in the latter case. Note that assumptions (A4) and (A6) are not needed for the analysis in this section.

\subsection{The inner regions}

We start inspecting the inner regions of the $N$-pulse solution. To zoom in to the $j$-th inner region, close to $x = P_j(t)$, we introduce the stretched traveling wave coordinate centered around $P_j(t)$
\begin{equation}
	\xi_j = \frac{\sqrt{m}}{D} (x-P_j(t)) = \frac{\sqrt{m}}{D} \left( x - P_j(0) - \int_0^t \hat{c}_j(s) ds \right).
\label{eq:ex-inner-scaling-xi}
\end{equation}
Note that by assumptions (A3) and (A1) this is a stretched coordinate since $\frac{D}{\sqrt{m}} \leq \frac{a^2}{m^2} \ll 1$. We will denote this $j$-th inner region by $I_j^{in}$. As is common practice in geometric singular perturbation theory, we explicitly define $I_j^{in}$ by assuming that $\xi_j \in [-\frac{1}{\sqrt{\varepsilon}},\frac{1}{\sqrt{\varepsilon}}]$, with $\varepsilon = \frac{a}{m}$.

Following the scalings introduced in~\cite{chen2009oscillatory, dek2siam, Eric-Striped} we set $\hat{c}_j(t) =  \frac{D a^2}{m \sqrt{m}} c_j(t)$ where $c_j(t) = \mathcal{O}(1)$. By assumption (A2) we thus have $\hat{c}_j(t) \ll 1$, i.e. pulses move only slowly in time. We can thus use a quasi-steady approximation and treat $t$ as a parameter in our analysis (cf.~\cite{dek1siam,dek2siam, Sun2005, chen2009oscillatory}). At the pulse location we also need to scale $U$ and $V$. Again following the previously mentioned scalings~\cite{chen2009oscillatory, dek2siam, Eric-Striped}, it turns out we need to scale these in the inner regions as
\begin{equation}
U = \frac{m\sqrt{m}D}{a} u; \hspace{1cm} V = \frac{a}{\sqrt{m}D} v.\label{eq:ex-inner-scaling}
\end{equation}
Putting in these scalings gives us the following problem for the inner region at the $j$-th pulse:
\begin{equation}
	\begin{cases}
- \frac{a^2}{m^2} \frac{D m \sqrt{m}}{a^2} \frac{D a^2}{m \sqrt{m}} c_j(t) u_j'
& = u_j'' - \frac{a^2}{m^2} u_jv_j^2 + \frac{a^4}{m^4} \frac{D m \sqrt{m}}{a^2} - \frac{a^4}{m^4} \left(\frac{D m \sqrt{m}}{a^2}\right)^2 u_j + \frac{a^2}{m^2} \frac{D m \sqrt{m}}{a^2} h_x\left( P_j + \frac{D}{\sqrt{m}} \xi_j \right) u_j' \\
&\quad + \frac{a^4}{m^4} \left(\frac{D m \sqrt{m}}{a^2}\right)^2 h_{xx}\left(P_j + \frac{D}{\sqrt{m}} \xi_j \right) u_j \\
- \frac{a^2}{m^2} c_j(t) v_j'
& = v_j'' - v_j + u_jv_j^2,
	\end{cases}
\label{eq:inner-equation}
\end{equation}
where the prime denotes derivatives with respect to $\xi_j$ and the subscript $j$ is here to remind us that we are looking for a solution in the $j$-th inner region. To find solutions in the inner region, we use regular expansions for $u$ and $v$. The equations~\eqref{eq:inner-equation} suggest that the main small parameter is $\frac{a^2}{m^2}$ -- which is small by assumption (A1). Hence we look for solutions of the form
\begin{equation}
	\begin{cases}
		u_j & = u_{0j} + \frac{a^2}{m^2} u_{1j} + \ldots \\
		v_j & = v_{0j} + \frac{a^2}{m^2} v_{1j} + \ldots
	\end{cases}
\end{equation}
The leading order problem in the $j$-th inner region is then given by the following set of equations. This system is usually called the fast-reduced system in the context of geometric singular perturbation theory.
\begin{equation}
	\begin{cases}
		0 & = u_{0j}'', \\
		0 & = v_{0j}'' - v_{0j} + u_{0j} v_{0j}^2.
	\end{cases}
\end{equation}
Hence we find $u_{0j}$ to be constant and
\begin{equation}
v_{0j}(\xi) = \frac{3}{2} \frac{1}{u_{0j}} \sech^2(\xi/2). \label{eq:v0j}
\end{equation}
Thus, all $V$-pulses are at leading order given by the same $\sech$-function. However, their amplitudes vary, as these are determined by the values of $u_{0j}$, which are, so far, unknown. Later on, we will see that the values of $u_{0j}$ will be determined by (all) the pulse locations $P_1(t), \ldots, P_N(t)$. Note that the pulses thus influence each other (only) through this mechanism.
By assumptions (A1)-(A3) and (A5) we notice that the next order problem is given by
\begin{equation}
	\begin{cases}
		u_{1j}''  & = u_{0j} v_{0j}^2, \\
		v_{1j}'' - v_{1j} + 2 u_{0j} v_{0j} v_{1j} & = - c_j(t) v_{0j}' - v_{0j}^2 u_{1j},
	\end{cases}
	\label{eq:fast-equation-next-order}
\end{equation}
Unlike the $u$-equation, it is not clear a priori whether the $v$-equation is solvable. We define the self-adjoint operator $\mathcal{L}:= \partial_{\xi}^2  - 1 + 2 u_{0j} v_{0j}$. $\mathcal{L}$ has a non-empty kernel, since $\mathcal{L} v_{0j}' = 0$. Hence the inhomogeneous equation $\mathcal{L}v_{1j} = -c_j(t)v_{0j}' - v_{0j}^2 u_{1j}$ might not be solvable and we need to impose a Fredholm solvability condition
\begin{equation}
	\int_{I_j^{in}} c_j(t) v_{0j}'(\eta)^2 d\eta = \int_{I_j^{in}} - v_{0j}(\eta)^2 u_{1j}(\eta) v_{0j}'(\eta) d\eta.
\end{equation}
Applying integration by parts twice to the right-hand side yields
\begin{align*}
&\ \int_{I_j^{in}} u_{1j}(\eta) v_{0j}(\eta)^2 v_{0j}'(\eta) d\eta = \frac{1}{3} \int_{I_j^{in}} u_{1j}(\eta) \frac{d}{d\eta} [v_{0j}(\eta)^3] d\eta  = \frac{1}{3} \left[ v_{0j}(\eta)^3 u_{1j}(\eta) \right]_{\eta = -\frac{1}{\sqrt{\varepsilon}}}^{\eta = \frac{1}{\sqrt{\varepsilon}}}\ - \frac{1}{3} \int_{I_j^{in}} u_{1j}'(\eta) v_{0j}(\eta)^3 d\eta \\
= &\ - \frac{1}{3} \left[ u_{1j}'(\eta) \int_0^\eta v_{0j}(y)^3 dy \right]_{\eta = -\frac{1}{\sqrt{\varepsilon}}}^{\eta = \frac{1}{\sqrt{\varepsilon}}}\ + \frac{1}{3} \int_{I_j^{in}} u_{1j}''(\eta) \int_0^\eta v_{0j}(y)^3 dy\ d\eta +h.o.t.
\end{align*}
To get from the second to the third line, we have used that $v_{0j}$ gets exponentially small near the boundaries of $I_j^{in}$ and that $u_{1j}$ does not get exponentially large there. We note that $v_{0j}$ is an even function. Therefore $u_{1j}''$ is an even function and $\eta \mapsto \int_0^\eta v_{0j}(y)^3 dy$ is an odd function. So the last integral over the inner region vanishes. Finally, because $v_{0j}^3$ is even, we can reformulate the solvability condition and obtain
\begin{equation}
c_j(t) \int_{I_j^{in}} v_{0j}'(\eta)^2 d\eta = \frac{1}{6} \left[ u_{1j}'\left(\frac{1}{\sqrt{\varepsilon}}\right) + u_{1j}'\left(-\frac{1}{\sqrt{\varepsilon}}\right) \right] \int_{I_j^{in}} v_{0j}(\eta)^3 d\eta.
\end{equation}
The integrals over the inner region can be approximated by integrals over $\R$, because $v_{0j}$ is exponentially small outside $I_j^{in}$. As we know the function $v_{0j}$ explicitly, it is possible to evaluate the integrals in this Fredholm condition explicitly. This gives us an expression for the (scaled) speed of the $j$-th pulse as
\begin{equation}
	c_j(t) = \frac{1}{u_{0j}} \left[ u_{1j}'\left(\frac{1}{\sqrt{\varepsilon}}\right) + u_{1j}'\left(-\frac{1}{\sqrt{\varepsilon}}\right) \right].
\label{eq:speed-eq1}
\end{equation}
It follows from the $u$-equation in~\eqref{eq:fast-equation-next-order} that,
\begin{align}
	u_{1j}'\left(\frac{1}{\sqrt{\varepsilon}}\right) - u_{1j}'\left(-\frac{1}{\sqrt{\varepsilon}}\right)
& = \int_{I_j^{in}} u_{1j}''(\eta) d\eta = \int_{I_j^{in}} u_{0j} v_{0j}(\eta)^2 d\eta  = \int_{-\infty}^\infty u_{0j} v_{0j}(\eta)^2 d\eta + h.o.t. = \frac{6}{u_{0j}} + h.o.t.
\label{eq:jumpcond1}
\end{align}
Combining this with ~\eqref{eq:speed-eq1}, we conclude
\begin{align}
	c_j(t) &
= \frac{1}{6} \left[u_{1j}'\left(\frac{1}{\sqrt{\varepsilon}}\right) + u_{1j}'\left(-\frac{1}{\sqrt{\varepsilon}}\right) \right] \left[u_{1j}'\left(\frac{1}{\sqrt{\varepsilon}}\right) - u_{1j}'\left(-\frac{1}{\sqrt{\varepsilon}}\right) \right]  = \frac{1}{6} \left[u_{1j}'\left(\frac{1}{\sqrt{\varepsilon}}\right)^2 - u_{1j}'\left(-\frac{1}{\sqrt{\varepsilon}}\right)^2 \right].
\label{eq:speed-eq2}
\end{align}
The values $u_{1j}'\left(\pm 1/\sqrt{\varepsilon} \right)$ can be found by matching this inner solution to the outer solutions for $U$. Note that the speed of the $j$-th pulse does not seem to depend explicitly on the other pulses. However, the values of $u_{1j}'$ are not yet determined and we will find that these \emph{do} depend on the location of (all) other pulses.

\subsection{The outer regions}

In the outer regions, the $V$-component should be exponentially small, since $v_{0j}$ gets exponentially small near the boundaries of the inner regions. Since the $V$-equation is automatically solved by $V = 0$, we can set $V = 0$ in the outer regions to acquire a leading order approximation and we thus only need to deal with the $U$-equation. In each of the outer regions, equation~\eqref{eq:extKmodel1} reduces to the ODE
\begin{equation}
	0 = U_{xx} + h_x U_x + h_{xx} U + a - U.
\end{equation}
Since the pulses only travel asymptotically slow, the solutions of these equations are expected to be of order $\mathcal{O}(a)$ because of the forcing term. Therefore we rescale $U$ as $U = a \tilde{U}$, so that
\begin{equation}
	0 = \tilde{U}_{xx} + h_x \tilde{U}_x + h_{xx} \tilde{U} + 1 - \tilde{U}.
\end{equation}
Without explicitly solving these equations, we can already match the outer solutions to the inner solutions. For this we need to recall the scalings in equations~\eqref{eq:ex-inner-scaling-xi} and~\eqref{eq:ex-inner-scaling}. Careful bookkeeping then reveals that
\begin{align*}
	\tilde{U}(P_j) &= \frac{m \sqrt{m} D}{a^2} u_{0j} + h.o.t. \\
	\tilde{U}_x(P_j^\pm) &= \frac{m \sqrt{m} D}{a^2} \frac{\sqrt{m}}{D} \frac{a^2}{m^2} u_{1j}'\left(\pm\frac{1}{\sqrt{\varepsilon}}\right) + h.o.t. = u_{1j}'\left(\pm\frac{1}{\sqrt{\varepsilon}}\right) + h.o.t.
\end{align*}
where $P_j^+$ denotes taking the limit from above, and $P_j^-$ the limit from below.
Thus at this moment we have reduced the full PDE problem to a ODE problem with (undetermined) internal boundary conditions. We thus need to find a function $\tilde{U}$ and constants $u_{0j}$ that simultaneously satisfy the ODE
\begin{align}
	0 = \tilde{U}_{xx} + h_x \tilde{U}_x + h_{xx} \tilde{U} + 1 - \tilde{U}; \hspace{1cm} \tilde{U}(P_j) = \frac{m \sqrt{m} D}{a^2} u_{0j}. \label{eq:Ex-outer1}
\end{align}
\emph{and}, by~\eqref{eq:jumpcond1}, the jump conditions
\begin{equation}
	\tilde{U}_x(P_j^+) - \tilde{U}_x(P_j^-) = \frac{6}{u_{0j}},
\label{eq:jumpcond}
\end{equation}
Note that the ODE should also be accompanied by two boundary conditions, which -- of course -- depend on the type of domain we are interested in. Moreover, the expression~\eqref{eq:speed-eq2} for the speed $c_j(t)$ can be rewritten to
\begin{equation}
	c_j(t) = \frac{1}{6} \left[ \tilde{U}_x(P_j^+)^2 - \tilde{U}_x(P_j^-)^2\right].
	\label{eq:ODEc1}
\end{equation}
Thus the speed of the $j$-th pulse is determined by the (differences of the) squares of the derivative of $\tilde{U}$ at the pulse location. Since we are interested in this pulse movement, our next task is to actually solve the problem given by~\eqref{eq:Ex-outer1}-\eqref{eq:jumpcond}. We separate this problem into two different cases: (i) the case of assumption (A3) and (ii) the case of assumption (A3'), in particular when $\frac{m \sqrt{m} D}{a^2} = \mathcal{O}(1)$. The former case will be significantly simpler as the internal boundaries are approximately zero.

\subsection{Pulse location ODE under assumption (A3)} \label{sec:Pulse-Location-ODE-(A3)}

Under assumption (A3), the internal boundary conditions are approximated by $\tilde{U}(P_j) = 0$ so that $\tilde{U}$ is independent of $u_{0j}$ at leading order,
\begin{equation}
	0 = \tilde{U}_{xx} + h_x \tilde{U}_x + h_{xx} \tilde{U} + 1 - \tilde{U}; \hspace{1cm} \tilde{U}(P_j) = 0. \hspace{2cm} (j = 1, \ldots, N)
	\label{eq:outer-ODE-U-A3}
\end{equation}
This immensely reduces the complexity of the problem, as $\tilde{U}$ in the $k$-th outer region now only depends on the positions $P_{k-1}(t)$ and $P_k(t)$ -- and not on any of the others. It is therefore relatively easy to analytically approximate these expressions -- and the pulse location ODE -- \emph{if} we know the explicit solutions to the ODE. For general $h = h(x)$ it is, however,  in general not possible to find explicit solutions (in closed form) of this ODE. This does not obstruct the fact that also in this case the PDE can be reduced to a finite dimensional system of ODEs. However, to explicitly evaluate the ODE dynamics, we need to turn to numerical boundary value problem solvers. Note that although the value of $u_{0j}$ does not play a leading order role in the outer region expressions $\tilde{U}$, it does play a leading order role in the linear stability analysis -- therefore it is important to (also) still find a leading order expression of $u_{0j}$.

\subsubsection{Terrain with constant slope, i.e. $h(x) = H x$}

\begin{figure}
	\centering
	\includegraphics {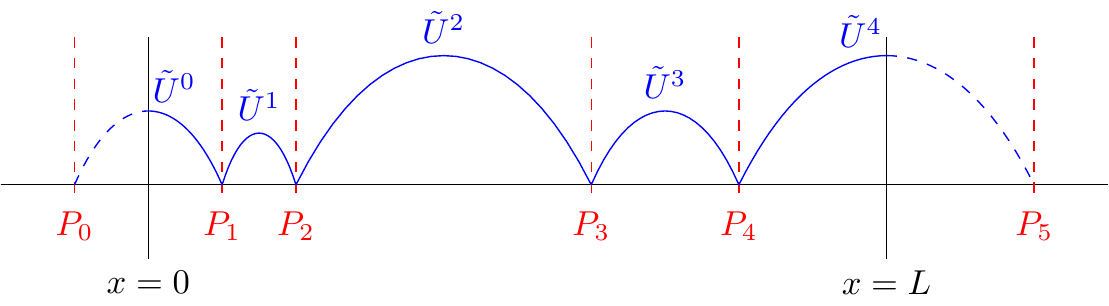}
	\caption{Sketch of the outer regions and the solutions $\tilde{U}^k$ in the corresponding $k$-th outer region.}
	\label{fig:sketchOuterRegions}
\end{figure}

When we consider a terrain with a constant slope, we do have access to explicit solutions for the outer region ODE~\eqref{eq:outer-ODE-U-A3}. Equation~\eqref{eq:outer-ODE-U-A3} then becomes
\begin{equation}
	0 = \tilde{U}_{xx} + H \tilde{U}_x + 1 - \tilde{U}; \hspace{1cm} \tilde{U}(P_j) = 0. \hspace{2cm} (j = 1, \ldots, N)
\end{equation}
The general solution is
\begin{align*}
	\tilde{U}(x) & = 1 + C_1 e^{D_1 x} + C_2 e^{D_2 x},
\intertext{where}
	D_{1,2} &:= \frac{1}{2} \left( - H \pm \sqrt{H^2 + 4} \right).
\end{align*}
We denote the solution in the $k$-th outer region by $\tilde{U}^k$ (see Figure~\ref{fig:sketchOuterRegions}). All, but the first and last, satisfy two internal boundary conditions $\tilde{U}^k(P_k) = 0$ and $\tilde{U}^k(P_{k+1}) = 0$, and are then given by
\begin{equation}
	\tilde{U}^k(x) = 1 + \frac{ \left(1 - e^{D_2 \Delta P_k}\right) e^{D_1(x-P_k)} + \left(e^{D_1 \Delta P_k}-1\right) e^{D_2(x-P_k)}}{e^{D_2 \Delta P_k} - e^{D_1 \Delta P_k}}, \hspace{1cm} (k = 1,\ldots, N-1)
\end{equation}
where $\Delta P_k := P_{k+1}-P_k$ is the distance between the two consecutive pulses.
To derive an expression for the pulse-location ODE, it is necessary to find $\tilde{U}^k_x(P_k)$ and $\tilde{U}^k_x(P_{k+1})$. Direct computation of these derivatives yields after some algebra:
\begin{align}
	\tilde{U}^k_x(P_k)
& = \frac{ D_1 \left(1 - e^{D_2 \Delta P_k}\right) + D_2 \left( e^{D_1 \Delta P_k} - 1\right) }{e^{D_2 \Delta P_k} - e^{D_1 \Delta P_k}} &
& = \frac{H}{2} - \frac{\sqrt{H^2+4}}{2} \frac{e^{H \Delta P_k / 2} - \cosh( \sqrt{H^2+4} \Delta P_k /2 )}{\sinh( \sqrt{H^2+4} \Delta P_k / 2)}, \nonumber \\
	\tilde{U}^k_x(P_{k+1})
& = \frac{D_1\left(1 - e^{D_2\Delta P_k}\right) e^{D_1 \Delta P_k} + D_2 \left( e^{D_1 \Delta P_k} - 1\right) e^{D_2 \Delta P_k}}{e^{D_2 \Delta P_k} - e^{D_1 \Delta P_k}} &
& = \frac{H}{2} + \frac{\sqrt{H^2+4}}{2} \frac{e^{-H \Delta P_k / 2} - \cosh( \sqrt{H^2+4} \Delta P_k / 2)}{\sinh(\sqrt{H^2+4} \Delta P_k / 2)}. \nonumber \\
&&& \hspace{3.5cm} (k = 1,\ldots,N-1) \label{eq:outerRegionDerivatives}
\end{align}
Substitution of these expression in equation~\eqref{eq:ODEc1} gives the movement of pulses on a terrain given by $h(x) = Hx$ as
\begin{multline}
\frac{dP_j}{dt} = \frac{D a^2}{m \sqrt{m}} \frac{1}{6}
\left[ \left( \frac{H}{2} - \frac{\sqrt{H^2+4}}{2} \frac{e^{H\Delta P_j/2} - \cosh\left( \sqrt{H^2+4}\Delta P_j/2\right)}{\sinh \left( \sqrt{H^2+4} \Delta P_j/2\right)} \right)^2 \right. \\
 - \left. \left( \frac{H}{2} + \frac{\sqrt{H^2+4}}{2} \frac{e^{-H\Delta P_{j-1}/2} - \cosh\left( \sqrt{H^2+4} \Delta P_{j-1}/2 \right)}{\sinh \left( \sqrt{H^2+4}\Delta P_{j-1}/2 \right)} \right)^2 \right]. \hspace{1cm} (j = 2, \ldots N-1)
\label{eq:ODEforgenH}
\end{multline}
For completely flat terrains we have a slope $H = 0$ so that the ODE reduces to
\begin{equation}
 \frac{dP_j}{dt} = \frac{D a^2}{m \sqrt{m}} \frac{1}{6} \left[ \tanh(\Delta P_j / 2)^2 - \tanh(\Delta P_{j-1}/2 )^2 \right], \hspace{1cm} (j = 2, \ldots, N-1)
\label{eq:ODEforHzero}
\end{equation}
which is in agreement with~\cite[Equation (2.28)]{chen2009oscillatory}. The values for $u_{0j}$ are obtained by combining the expressions in~\eqref{eq:outerRegionDerivatives} with equation~\eqref{eq:jumpcond}. We obtain
\begin{equation}
	\frac{6}{u_{0j}} = - \frac{\sqrt{H^2+4}}{2} \left[ \frac{e^{H \Delta P_j/2} - \cosh(\sqrt{H^2+4}\Delta P_j/2)}{\sinh(\sqrt{H^2+4}\Delta P_j/2)} + \frac{e^{-H \Delta P_{j-1}/2} - \cosh( \sqrt{H^2+4}\Delta P_{j-1}/2)}{\sinh(\sqrt{H^2+4}\Delta P_{j-1}/2)} \right].\hspace{1cm} (j = 2, \ldots, N-1)
\label{eq:u0jforgenH}
\end{equation}
For $H = 0$ this expression reduces to
\begin{equation}
	\frac{6}{u_{0j}} = \tanh(\Delta P_j /2) + \tanh(\Delta P_{j-1}/2). \hspace{4cm} (j = 2, \ldots, N-1).\label{eq:u0jforHzero}
\end{equation}
Note that in principle these expressions \eqref{eq:ODEforgenH}-\eqref{eq:u0jforHzero} do not hold for $j = 1$ and $j = N$ as these pulses do not have two neighbours. In fact, the solutions $\tilde{U}$ in the first and last outer region do not satisfy the same boundary conditions as the solution in the other regions. One should therefore recompute $\tilde{U}_x^1(P_1)$ and $\tilde{U}^{N+1}(P_N)$ for each type of domain. However, it is possible to introduce the two auxiliary locations $P_0$ and $P_{N+1}$ in such a way that expressions~\eqref{eq:ODEforgenH} and~\eqref{eq:ODEforHzero} still holds true for $j=1$ and $j = N$ (see Figure~\ref{fig:sketchOuterRegions}). Below we inspect several type of domains and explain this reasoning further

\paragraph{Unbounded domains}

On unbounded domains, we only have the requirement that solutions stay bounded as $|x| \rightarrow \infty$. So $\tilde{U}^1$ should satisfy this boundedness requirement, the ODE and the boundary condition $\tilde{U}(P_1) = 0$. From this it follows that $\tilde{U}_x^1(P_1) = \frac{H}{2} - \frac{\sqrt{H^2+4}}{2}$. Similarly, $\tilde{U}_x^{N+1}(P_N) = \frac{H}{2} + \frac{\sqrt{H^2+4}}{2}$. When we introduce $P_0 \rightarrow -\infty$ and $P_{N+1} \rightarrow \infty$ in equation~\eqref{eq:ODEforgenH} we see that the pulse location ODE is given by~\eqref{eq:ODEforgenH}, even for $j = 1$ and $j = N$.

\paragraph{Bounded domains with periodic boundary conditions} When we consider the bounded domain $[0,L]$ with periodic boundary conditions, we set $\tilde{U}(0) = \tilde{U}(L)$. That is, the first pulse has the last pulse as a neighbour. Therefore expression~\eqref{eq:ODEforgenH} is directly applicable when we set $\Delta P_0 = \Delta P_N = L - P_N + P_1$ or -- equivalently -- $P_0 := P_N - L$ and $P_{N+1} := L + P_1$.

\paragraph{Domains with Neumann boundary conditions} When the domain $[0,L]$ has Neumann boundary conditions, we impose the boundary conditions $\tilde{U}_x(0) = 0$ and $\tilde{U}_x(L) = 0$. A similar and straightforward computation then yields
\begin{align*}
	\tilde{U}_x^1(P_1) & = \frac{-2 \sinh(\sqrt{H^2+4} P_1/2)}{H \sinh(\sqrt{H^2+4} P_1/2) + \sqrt{H^2+4} \cosh(\sqrt{H^2+4} P_1/2)}, \\
	\tilde{U}_x^{N+1}(P_N) & =\frac{2 \sinh(\sqrt{H^2+4} P_N/2)}{H \sinh(\sqrt{H^2+4} P_N/2) + \sqrt{H^2+4} \cosh(\sqrt{H^2+4} P_N/2)}.
\end{align*}
The positions of the auxiliary locations $P_0 < 0$ respectively $P_{N+1} > L$ are determined as the negative zero of $\tilde{U}^0$ extended below $x = 0$ respectively the second zero of $\tilde{U}^N$ extended beyond $x = L$. However, for general $H$ there is no simple expression (in closed form) for $P_0$ and $P_{N+1}$, though we find that $\Delta P_0 = P_1 - P_0$ decreases as $P_0$ decreases and $\Delta P_N = P_{N+1}-P_N$ decreases as $L-P_N$ decreases (i.e. as $P_N$ increases). In the specific case $H = 0$ we do find explicit expressions: $P_0 = -P_1$ and $P_{N+1} = 2L - P_N$.

\subsubsection{Fixed points of the pulse-location ODE}\label{sec:ODEFixedPoints}

It is natural to study the fixed points of the pulse-location ODE~\eqref{eq:ODEforgenH}. Whether this ODE has any fixed points depends on the type of domain and boundary conditions. Below we summarise the results we acquired for bounded domains with Neumann boundary conditions, for bounded domains with periodic boundary conditions and for unbounded domains. The proofs of these statements rely on the fact that the derivatives $\Delta \tilde{U}(P_k^\pm)$ strictly increase/decrease as a function of the distance to the neighbouring pulse. The (mostly technical) details of the proofs can be found in appendix~\ref{sec:app-FixedPointProofs}.

Note that the results in this section only consider the behaviour of the pulse-location ODE~\eqref{eq:ODEforgenH} in itself and do \emph{not} take the behaviour of the full PDE into account. Specifically we do not take the stability of the $N$-pulse manifold $\mathcal{M}_N$ into account. It can happen that a fixed point of the ODE is stable under the flow of the ODE, but not under the flow of the complete PDE (as we will see in section~\ref{sec:NumericalSimulations}, e.g. Figure~\ref{fig:NUM-FP-unstable-N10}).

\paragraph{Bounded domains with Neumann boundary conditions}
For these domains the pulse location ODE~\eqref{eq:ODEforgenH} has precisely one fixed point. This fixed point is stable under the flow of the ODE. An example of this is given in Figure~\ref{fig:NUM-FP-unstable-N10}.

\paragraph{Bounded domain with periodic boundary conditions}
On these domains the ODE does not have any fixed points, unless $H = 0$, for which there is a continuous family of fixed points. All of these fixed points are regularly spaced configurations, i.e. $\Delta P_j = L/N$ for all $j$. This family of fixed points is stable under the flow of the ODE.

Moreover, on bounded domains with periodic boundary conditions, the pulse-location ODE~\eqref{eq:ODEforgenH} does have a continuous family of uniformly traveling solutions in which all pulses move with the same speed and the distance between two consecutive pulses is $\Delta P_j = L /N$ for all $j$, i.e. the pulses are regularly spaced. This family of solutions is stable under the flow of the ODE.

\paragraph{Unbounded domains}

In this situation the ODE~\eqref{eq:ODEforgenH} does not have any fixed points and there does not exist any uniformly traveling solution either, unless $N = 1$. In fact, the distance between the first and last pulse, $P_N - P_0$, is ever increasing.

\subsection{Pulse location ODE under assumption (A3')}\label{sec:pulseODE-SaddleNode}

When $\frac{m \sqrt{m} D}{a^2} = \mathcal{O}(1)$, equation~\eqref{eq:Ex-outer1} can no longer be simplified to~\eqref{eq:outer-ODE-U-A3}. Thus we do need to determine the values of $u_{0j}$ directly and we do need to make sure these lead to a solution $\tilde{U}$ that satisfies the jump conditions in equation~\eqref{eq:jumpcond}. More concretely, for a given $\vec{u}_0 := (u_{01},\ldots,u_{0N})^T$, a vector of the values of the internal boundary conditions, the boundary value problem~\eqref{eq:Ex-outer1} is well-posed and has a (uniquely determined) solution $\tilde{U}$ on all subdomains. With this $\tilde{U}$ we can validate the jump conditions~\eqref{eq:jumpcond}. The following quantity defines a way to measure how good the internal boundary conditions $\vec{u}_0$ satisfy the jump conditions
\begin{equation*}
	\vec{F}(\vec{u}_0) := \left( \tilde{U}_x(P_1^+;\vec{u}_0) - \tilde{U}_x(P_1^-;\vec{u}_0) - \frac{6}{u_{01}}, \ldots, \tilde{U}_x(P_N^+;\vec{u}_0) - \tilde{U}_x(P_N^-;\vec{u}_0) - \frac{6}{u_{0N}} \right)^T.
\end{equation*}
The correct internal boundary conditions $\vec{u}_0^*$ should satisfy $\vec{F}(\vec{u}_0^*) = \vec{0}$. If~\eqref{eq:Ex-outer1} has closed-form solutions, the function $\vec{F}(\vec{u}_0^*)$ can be constructed explicitly. Hoever, in general one needs a numerical root-finding scheme to solve $\vec{F}(\vec{u}_0^*) = \vec{0}$. We have used the standard Newton scheme for this. Note that $\vec{F}(\vec{u}_0) = \vec{0}$ does not necessarily have any solution and if it has, those solutions are -- in general -- not unique. Some cases for which we can find the roots explicitly are studied below. For notational convenience we define $\delta := \frac{m \sqrt{m} D}{a^2}$.

\subsubsection{Terrain with constant slope, i.e. $h(x) = Hx$}

The reasoning in section~\ref{sec:Pulse-Location-ODE-(A3)} leading to the pulse-location ODE~\eqref{eq:ODEforgenH} in the case of $\delta \ll 1$, can be repeated here. The only difference is the addition of non-zero internal boundary conditions. The derivatives $\tilde{U}^k_x(P_k)$ and $\tilde{U}^k_x(P_{k+1})$ can be computed in a similar way as before. This time -- when $\delta = \O(1)$ -- we find
{\small
\begin{align*}
	\tilde{U}^k_x(P_k) =
& \ \left(1 - \delta u_{0,k}\right) \frac{H}{2} - \frac{\sqrt{H^2+4}}{2} \frac{ \left(1 - \delta u_{0,k+1}\right)e^{H \Delta P_k/2} - \left(1 - \delta u_{0,k}\right) \cosh(\sqrt{H^2+4} \Delta P_k/2)}{\sinh(\sqrt{H^2+4}\Delta P_k/2)}, \\
	\tilde{U}^k_x(P_{k+1}) =
&\ \left(1 - \delta u_{0,k+1}\right) \frac{H}{2} + \frac{\sqrt{H^2+4}}{2} \frac{ \left(1 - \delta u_{0,k}\right) e^{-H \Delta P_k/2} - \left(1 - \delta u_{0,k+1}\right) \cosh(\sqrt{H^2+4}\Delta P_k/2)}{\sinh(\sqrt{H^2+4}\Delta P_k/2)}. \hspace{1cm} (k = 1, \ldots, N-1)
\end{align*}}
Substitution of these expressions in equation~\eqref{eq:ODEc1} gives the movement of the pulses as
\begin{multline}
\frac{dP_j}{dt} = \frac{D a^2}{m \sqrt{m}} \frac{1}{6}
\left[ \left( \kappa_k \frac{H}{2} - \frac{\sqrt{H^2+4}}{2} \frac{\kappa_{k+1} e^{H\Delta P_j/2} - \kappa_k \cosh\left( \sqrt{H^2+4}\Delta P_j/2\right)}{\sinh \left( \sqrt{H^2+4} \Delta P_j/2\right)} \right)^2 \right. \\
 - \left. \left( \kappa_k \frac{H}{2} + \frac{\sqrt{H^2+4}}{2} \frac{\kappa_{k-1} e^{-H\Delta P_{j-1}/2} - \kappa_k \cosh\left( \sqrt{H^2+4} \Delta P_{j-1}/2 \right)}{\sinh \left( \sqrt{H^2+4}\Delta P_{j-1}/2 \right)} \right)^2 \right],
\label{eq:ODEforgenH-deltaO1}
\end{multline}
where $\kappa_j := 1 - \delta u_{0j}$. However, the $u_{0j}$-values are still unknown at this moment. To obtain these we need to solve $\vec{F}(\vec{u}_0) = \vec{0}$. With the explicit expressions for the derivatives $\tilde{U}^k_x(P_k)$ and $\tilde{U}^k_x(P_{k+1})$ at hand we can express the components of this function explicitly
\begin{equation}
	F^k(\vec{u}_0)
= - \frac{\sqrt{H^2+4}}{2} \left[ \frac{ \kappa_{k+1} e^{H\Delta P_k/2} - \kappa_k \cosh(\sqrt{H^2+4}\Delta P_k/2)}{\sinh(\sqrt{H^2+4}\Delta P_k/2)} \right. \left. + \frac{ \kappa_{k-1} e^{-H \Delta P_{k-1}/2} - \kappa_k \cosh(\sqrt{H^2+4}\Delta P_{k-1}/2)}{\sinh(\sqrt{H^2+4}\Delta P_{k-1}/2)} \right] - \frac{6}{u_{0k}}.
\label{eq:Fk-jumpcondition}
\end{equation}
As before equations~\eqref{eq:ODEforgenH-deltaO1} and~\eqref{eq:Fk-jumpcondition} do not hold true for $j=1$ and $j = N$ because these do not have two neighbour pulses. Again it is possible to derive expressions for $\tilde{U}_x^1(P_1)$ and $\tilde{U}_x^{N+1}(P_N)$ as we did in section~\ref{sec:Pulse-Location-ODE-(A3)} when $\delta \ll 1$. As the procedure is so similar, we refrain from doing that here. In general, one cannot expect to be able to determine the roots of~\eqref{eq:Fk-jumpcondition} explicitly. Therefore we only consider the upcoming one-pulse example explicitly. We refrain from studying the pulse-location ODE analytically and use a numerical root-solving algorithm in section~\ref{sec:NumericalSimulations}.

\subsubsection{A one-pulse on $\R$}

The simplest, explicitly solvable, case is a $1$-pulse on $\R$. The solution of ODE~\eqref{eq:Ex-outer1} is for all $u_{01}$ and given by
\begin{align*}
	\tilde{U}^0(x) & = 1 + \left( \delta u_{01} - 1 \right) e^{D_1(x-P_1)}, &
	\tilde{U}^1(x) & = 1 + \left( \delta u_{01} - 1 \right) e^{D_2(x-P_1)}.
\end{align*}
Thus the function $F$ is given by
\begin{equation*}
	F(u_{01}) = \sqrt{H^2+4} \left(1 - \delta u_{01} \right) - \frac{6}{u_{01}}.
\end{equation*}
so that $F(u_{01}) = 0$ is solved by
\begin{equation}
	\left(u_{01}\right)_\pm = \frac{1}{2} \frac{ 1 \pm \sqrt{1 - 24 \delta / \sqrt{H^2+4}}}{\delta}.
	\label{eq:EX-u0-homoclinic}
\end{equation}
This expression agrees with the expressions found in the literature~\cite{dek1siam,Lottes}. It is also clear from this expression that there are two solutions as long as $\delta < \delta_c := \frac{1}{24} \sqrt{H^2+4}$. So for $H = 0$ we find $\delta_c = \frac{1}{12}$, again in correspondence with the literature~\cite[page 8]{dek1siam}. When $\delta = \delta_c$ a saddle-node bifurcation occurs where the two solutions coincide and for $\delta > \delta_c$ solutions no longer exist. The pulse-location ODE for this situation is given by
\begin{equation}
	\frac{dP_1}{dt} = \frac{D a^2}{m \sqrt{m}} H \sqrt{H^2+4}\ \left( 1 - \delta u_{01} \right)^2.
	\label{eq:EX-speed-homoclinic}
\end{equation}
In the asymptotic limit $\delta \ll 1$, equation~\eqref{eq:EX-u0-homoclinic} yields two solutions, given to leading order by
\begin{align*}
	(u_{01})_+ & = \frac{1}{\delta} + \mathcal{O}(1), 	&
	(u_{01})_- & = \frac{6}{\sqrt{H^2+4}} + \mathcal{O}(\delta).
\end{align*}
In section~\ref{sec:Pulse-Location-ODE-(A3)}, in equation~\eqref{eq:u0jforgenH} we found only one value for $u_{0j}$. Carefully taking the limit $\Delta P_0 \rightarrow - \infty$ and $\Delta P_1 \rightarrow \infty$ of~\eqref{eq:u0jforgenH} reveals that only $(u_{01})_-$ is found. This is because $(u_{01})_+ \gg 1$ in this asymptotic limit and it therefore does not satisfy the (implicit) assumption that $u_{01} = \mathcal{O}(1)$. This focus on $(u_{01})_-$ is justified; if one were to study the other possibility, i.e. pulses that have the internal boundary condition $(u_{01})_+$, one would quickly find out that these pulses are always unstable~\cite{doelman1998stability}.

\section{Linear Stability}\label{sec:linstabsections}

In this section, we look at perturbations of $N$-pulse solutions and study the associated quasi-steady spectrum.  For this we freeze the $N$-pulse solution and (at leading order) its time-dependent movement on the manifold $\mathcal{M}_N$. We then linearise around this $N$-pulse configuration to obtain a quasi-steady eigenvalue problem, which can be solved along the very same lines as the existence problem. This gives us quasi-steady eigenvalues and eigenfunctions. We can compute these for any given time $t$ and as such these quasi-steady eigenvalues and eigenfunctions are parametrised by time $t$ (via the pulse locations $P_j(t)$) -- see also~\cite{DKP, HeijsterFrontInt, bellsky2013}). Although our approach in principle works in a general setting -- thus for instance with a general topography $h(x)$ -- both its interpretation and its presentation are significantly facilitated when we restrict ourselves to pulse-solutions of the extended Klausmeier model~\eqref{eq:extKmodel1} for terrains with constant slope, i.e. $h(x) = Hx$. For other kind of terrains the PDE has space-dependent coefficients and explicit expressions are not present in general. Here other techniques need to be used~\cite{varyingTerrainArticle}.

We start with the classical case of a single pulse (i.e. $N = 1$) on $\mathbb{R}$ in section~\ref{sec:stab-1-pulse}. This illustrates the concepts and shows how it generalises to other boundaries or multiple pulses, which we will study subsequently in section~\ref{sec:stab-N-pulses}. In both sections we find essential differences between the asymptotic cases $m \gg 1 + H^2/4$ and $m \ll 1 + H^2/4$. In the former case ($m \gg + H^2/4$) we find Hopf bifurcations. Moreover, we find that pulses in the stability problem are far apart such that the eigenfunctions decouple and can be studied per pulse. In the latter case ($m \ll 1 + H^2/4$) we find saddle-node bifurcations. However, in this situation the eigenfunctions are coupled, which leads to a more involved eigenvalue problem and more involved eigenfunctions~\cite{chen2009oscillatory}.

The first step in the stability analysis consists of linearizing the extended Klausmeier model around a (frozen) $N$-pulse solution. We denote the $N$-pulse configuration of this equation by $(U_p^N, V_p^N)$ and set $(U,V) = (U_p^N,V_p^N) + e^{\lambda t} (\bar{U},\bar{V})$ to study its linear stability. Following the scalings in~\cite{chen2009oscillatory, dek2siam} we scale the eigenvalue as $\lambda = m \hat{\lambda}$ to study the so-called large eigenvalues that correspond to perturbations non-tangent to the manifold $\mathcal{M}_N$ of $N$-pulse solutions. Thus we obtain the quasi-steady eigenvalue problem
\begin{equation}
	\begin{cases}
		0 & = \bar{U}_{xx} + H \bar{U}_x - (1 + m \hat{\lambda} + (V_p^N)^2) \bar{U} - 2 U_p^N V_p^N \bar{V} \\
		0 & = D^2 \bar{V}_{xx} + (2 U_p^N V_p^N - m - m \hat{\lambda} ) \bar{V} + (V_p^N)^2 \bar{U}.
	\end{cases}
\label{eq:stab-eigenvalueProblem}
\end{equation}
Our aim is to find the values $\hat{\lambda}$ for which we can solve this eigenvalue problem. To find these eigenvalues $\hat{\lambda}$ we can exploit the inner and outer regions of our previously obtained $N$-pulse solution. Because $V_p^N$ is localised near the pulse locations, we see that in the outer regions this problem reduces in leading order to
\begin{equation}
	\begin{cases}
		0 & = \bar{U}_{xx} + H U_{x} - (1 + m \hat{\lambda}) \bar{U} \\
		0 & = - (m+m \hat{\lambda}) \bar{V}.
	\end{cases}
\label{eq:stability-ODE-outer}
\end{equation}
Hence $\bar{V} = 0$ in the outer regions; $\bar{V}$ is also concentrated around the pulse locations in the stability problem.

Our approach now essentially boils down to the following. We first solve the $\bar{U}$-equation in the outer regions for general $\hat{\lambda}$. We then need to glue these solution together at the pulse locations. For this we require continuity of $\bar{U}$ and we additionally obtain a $\hat{\lambda}$-dependent jump condition for $\bar{U}_x$ at each pulse location, which is imposed by the solution in the inner regions. The correct eigenvalues $\hat{\lambda}$ are then those values that allow solutions $\bar{U}$ which satisfy the boundary conditions at both ends of the domain. This method thus also immediately gives us the form of the eigenfunction as well.

\subsection{Stability of homoclinic pulses on $\mathbb{R}$}\label{sec:stab-1-pulse}

We first consider the case of a homoclinic pulse on $\mathbb{R}$ that is located at $x = P_1$. In this setting we have one inner region, $I_1^{in}$, and two outer regions, $I_{1,2}^{out}$. Since we are working on $\mathbb{R}$ we do not have boundary conditions, but only require solutions in the outer regions to be bounded. Solving the homogeneous ODE~\eqref{eq:stability-ODE-outer} in the outer regions gives the solutions $\bar{U}_1$ in the first outer field and $\bar{U}_2$ in the second outer region as
\begin{equation*}
	\bar{U}_1(x) = C_1 e^{\frac{1}{2} [-H + \sqrt{H^2 + 4(1+m\hat{\lambda})}](x-P_1)}, \hspace{2cm}
	\bar{U}_2(x) = C_2 e^{\frac{1}{2} [-H - \sqrt{H^2 + 4(1+m\hat{\lambda})}](x-P_1)},
\end{equation*}
where $C_1$ and $C_2$ are some constants. To satisfy the continuity condition on $\bar{U}$, we set $C_1 = C_2 = \rho_1$. We then only need to impose a jump condition on the derivative $\tilde{U}_x$ at the pulse location. With respect to the outer regions the jump in $\bar{U}_x$ is given by
\begin{equation}
	\Delta_{out} \bar{U}_x(P_1) := \bar{U}_x(P_1^+) - \bar{U}_x(P_1^-) = - \rho_1 \sqrt{H^2+4(1+m\hat{\lambda})}.
\label{eq:stab-homoclinic-outerjump}
\end{equation}
This must be the same as the total change in $\bar{U}_x$ generated by the dynamics in the inner region. In the inner domain the system is given by
\begin{equation}
	\begin{cases}
		0 & = \frac{m}{D^2} \bar{U}'' + \frac{\sqrt{m}}{D} \bar{U}' - (1+m\hat{\lambda})\bar{U} - 2 U_p^N V_p^N \bar{V} - (V_p^N)^2 \bar{U}; \\
		0 & = m \bar{V}'' - (m+ m\hat{\lambda}) \bar{V} + 2 U_p^N V_p^N \bar{v} + (V_p^N)^2 \bar{U},
	\end{cases}
\end{equation}
where primes again denote derivatives with respect to the stretched coordinate $\xi_1$. From equation~\eqref{eq:v0j} and the scalings of~\eqref{eq:ex-inner-scaling} we know the approximate form of $V_p^1$ and $U_p^1$ in the inner region. For notational convenience we write $\omega(\xi):= \frac{3}{2} \sech(\xi/2)^2$. Moreover we note that $\bar{U} \approx \rho_1$ in the inner region by matching with the solutions in the outer region. Therefore in the inner region the stability problem reduces to
\begin{equation}
	\begin{cases}
		\bar{U}'' + \frac{D H}{\sqrt{m}} \bar{U}' - \frac{D^2}{m} (1+m\hat{\lambda}) \rho_1 - 2 D^2 \omega \bar{V} - \frac{a^2}{m^2} \frac{\rho_1}{u_{0j}^2} \omega^2 & = 0;\\
		\bar{V}'' - (1 + \hat{\lambda}) \bar{V} + 2 \omega \bar{V} & = - \frac{a^2}{m^2} \frac{1}{D^2} \frac{\rho_1}{ u_{01}^2} \omega^2.
	\end{cases}
\end{equation}
The $\bar{V}$-equation indicates that we need to scale $\bar{V}$ as
\begin{equation}
\bar{V} = - \frac{a^2}{m^2} \frac{1}{D^2} \frac{\rho_1}{u_{01}^2} V_{in},
\end{equation}
where $V_{in}$ thus satisfies
\begin{equation}
\left(\mathcal{L}_f(\zeta) - \hat{\lambda} \right) V_{in} := V_{in}'' - (1 + \hat{\lambda}) V_{in} + 2 \omega V_{in} = \omega^2. \label{eq:VINdifeq}
\end{equation}
We write the $\bar{U}$-equation as
\begin{align*}
\bar{U}'' + \frac{D H}{\sqrt{m}} \bar{U}' - \frac{D^2}{m} (1+m\hat{\lambda}) \bar{U} - \frac{a^2}{m^2} \left( 2 \omega V_{in} - \omega^2 \right) & = \\
\bar{U}'' + \frac{a^2}{m^2} \frac{D m \sqrt{m} H}{a^2} \bar{U}' - \frac{a^4}{m^4} \left( \frac{D m \sqrt{m}}{a^2} \right)^2 (1+m\hat{\lambda}) \bar{U} - \frac{a^2}{m^2} \left( 2 \omega V_{in} - \omega^2 \right) & = 0 .
\end{align*}
Because of assumptions (A1), (A3) and (A5) we find the leading order change of $\bar{U}'$ in the inner region to be
\begin{equation}
\Delta_{in} \bar{U}'_1:= \int_{I_{1}^{in}} \bar{U}''(\zeta) d\zeta = \frac{a^2}{m^2} \frac{\rho_1}{u_{01}^2} \int_{-\infty}^\infty (\omega^2 - 2 \omega V_{in}) d\zeta + h.o.t.
\end{equation}
For notational simplicity we write
\begin{equation}
\hat{C}(\hat{\lambda}) := \int_{-\infty}^\infty (\omega(\zeta)^2 - 2\omega(\zeta) V_{in}(\zeta;\hat{\lambda}) ) d\zeta.
\label{eq:stab-definition-Clambda}
\end{equation}
Because $\bar{U}_x = \frac{\sqrt{m}}{D} \bar{U}'$, we find the total jump in $\bar{U}_x$ over $I_{1}^{in}$,
\begin{equation}
\Delta_{in} \bar{U}_x(P_1) = \frac{a^2}{m\sqrt{m}D} \frac{\rho_1}{u_{01}^2} \hat{C}(\hat{\lambda}). \label{eq:weak-jump-1}
\end{equation}
Combining the outer and inner approximations of $\Delta \bar{U}_x$ in equations~\eqref{eq:stab-homoclinic-outerjump} and~\eqref{eq:weak-jump-1} yields
\begin{equation}
\frac{a^2}{m \sqrt{m} D} \frac{\rho_1}{u_{01}^2} \hat{C}(\hat{\lambda}) = -\sqrt{H^2 + 4 (1+m\hat{\lambda})}\rho_1.
\end{equation}
Since $\rho_1 = 0$ corresponds to the trivial solution of the eigenvalue problem~\eqref{eq:stab-eigenvalueProblem}, we take $\rho_1 \neq 0$ and find
\begin{equation}
\frac{a^2}{m\sqrt{m}D} \frac{1}{u_{01}^2} \hat{C}(\hat{\lambda}) = - \sqrt{H^ 2 + 4(1+m\hat{\lambda})}. \label{eq:LinStabWeak1}
\end{equation}
Now, $\hat{\lambda}$ is an eigenvalue when this expression holds true. This procedure can be followed for any given, fixed value of the parameters $a$ and $u_{01}$. Both $a$ and $u_{01}$ may vary (slowly) as a function of time, while we treat the time as an additional parameter. Therefore it is more insightful to rewrite~\eqref{eq:LinStabWeak1} into the equivalent, but more convenient form:
\begin{equation}
	m^2 D \frac{u_{01}^2}{a^2} = \frac{ \int_{-\infty}^\infty \omega V_{in} d\zeta - 3}{\sqrt{\hat{\lambda} + \frac{H^2 + 4}{4m}}}
\label{eq:weaklycondition}
\end{equation}
where we have used~\eqref{eq:stab-definition-Clambda} and $\int_{-\infty}^\infty \omega(\zeta)^2 d\zeta = 6$. We can only get a detailed understanding of the eigenvalues $\hat{\lambda}$ of this problem, once we understand the form of the right-hand side of~\eqref{eq:weaklycondition}, which boils down to studying the integral
\begin{equation}
	\mathcal{R}(\hat{\lambda}) := \int_{-\infty}^\infty \omega(\zeta) V_{in}(\zeta;\hat{\lambda}) d\zeta,
	\label{eq:definitionR}
\end{equation}
where $V_{in}(\zeta;\hat{\lambda})$ is a bounded function that solves~\eqref{eq:VINdifeq}.

\subsubsection{Properties of the integral $\mathcal{R}(\hat{\lambda})$}\label{sec:RlambdaProperties}

\begin{figure}[t]
	\centering
		\begin{subfigure}[t]{0.25\textwidth}
			\centering
			\includegraphics[width=\textwidth]{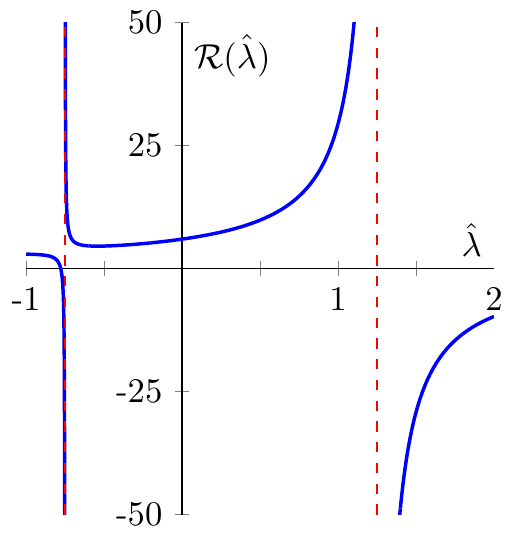}
			\caption{}
		\end{subfigure}
~
		\begin{subfigure}[t]{0.25\textwidth}
			\centering
			\includegraphics[width=\textwidth]{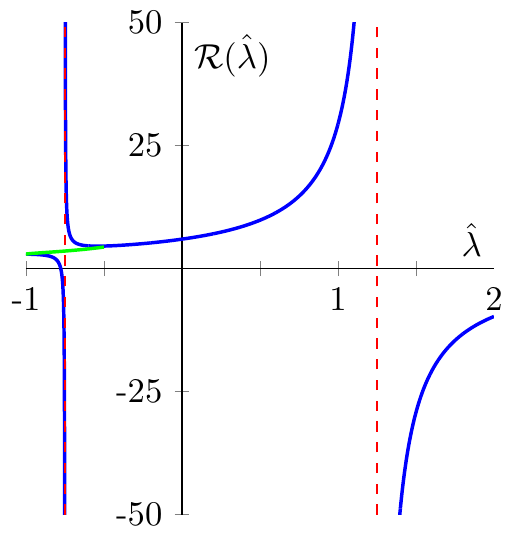}
			\caption{}
		\end{subfigure}
	\caption{The function $\mathcal{R}(\hat{\lambda})$. In (a) we show the form of $\mathcal{\R}(\hat{\lambda})$ only for real-valued $\hat{\lambda}$, whereas in (b) we also show the complex values of $\hat{\lambda}$ that lead to $\mathcal{R}(\hat{\lambda})$ that do not have an imaginary part (shown in green). In both figures the poles at $\hat{\lambda} = -3/4$ and $\hat{\lambda} = 5/4$ are indicated with dashed red lines.}
	\label{fig:RlambdaPlots}
\end{figure}

To get a detailed understanding of $R(\hat{\lambda})$, we need to solve~\eqref{eq:VINdifeq}. It is possible to transform this differential equation to a hypergeometric differential equation. The details of this procedure can be found in~\cite[section 5]{doelman1998stability} and~\cite[section 5.2]{Split-Evans} -- see Figure~\ref{fig:RlambdaPlots} for evaluations of $R(\hat{\lambda})$ based on this procedure. For several specific values of $\hat{\lambda}$ it is possible to get a direct grip on $\mathcal{R}(\hat{\lambda})$. Foremost, $V_{in}$ is only uniquely defined for $\hat{\lambda}$ that are not eigenvalues of the operator $\mathcal{L}_f$. When $\hat{\lambda}$ is an eigenvalue of $\mathcal{L}_f$, the solution $V_{in}$ is either not defined or not uniquely defined. When $V_{in}(\xi;\hat{\lambda})$ does not exist, the function $\mathcal{R}(\hat{\lambda})$ has a pole for this value of $\hat{\lambda}$. When $V_{in}(\xi;\hat{\lambda})$ is not uniquely defined for an eigenvalue $\hat{\lambda}$ of $\mathcal{L}_f$, the value of $\mathcal{R}(\hat{\lambda})$ is still uniquely defined~\cite{Split-Evans}.

The operator $\mathcal{L}_f$ is well-studied. The eigenvalues are known to be $\hat{\lambda} = \frac{5}{4}$, $\hat{\lambda} = 0$ and $\hat{\lambda} = - \frac{3}{4}$ and the essential spectrum is $(-\infty,-1)$ \cite{doelman2002homoclinic}. It turns out that $\mathcal{R}(\hat{\lambda})$ has poles for $\hat{\lambda} = \frac{5}{4}$ and for $\hat{\lambda} = - \frac{3}{4}$. For $\hat{\lambda} = 0$ one can verify that $V_{in}$ is given by $V_{in}(\xi;0) = C \omega'(\xi) + \omega(\xi)$, where $C$ is a constant. Direct substitution in~\eqref{eq:definitionR} shows that this constant $C$ drops out and we obtain
\begin{equation}
	\mathcal{R}(0) = 6. \label{eq:Rlambda0}
\end{equation}
The derivative of $\mathcal{R}$ at $\hat{\lambda} = 0$ can also be determined. For this, we first observe that $\mathcal{R}'(\hat{\lambda}) = \int_{-\infty}^\infty \omega(\xi) \partial_{\hat{\lambda}} V_{in}(\xi;\hat{\lambda}) d\xi$, where $\partial_{\hat{\lambda}} V_{in}(\xi;\hat{\lambda})$ satisfies
\begin{equation}
	\mathcal{L}_f \left( \partial_{\hat{\lambda}} V_{in}(\xi;\hat{\lambda}) \right) = V_{in}(\xi;\hat{\lambda}).
\end{equation}
Thus we must solve $\mathcal{L}_f \left( \partial_{\hat{\lambda}} V_{in}(x;0) \right) = \omega$. This yields $\partial_{\hat{\lambda}} V_{in}(\xi;0) = C\omega'(\xi) + \omega(\xi) + \frac{1}{2} \xi \omega'(\xi)$ and hence
\begin{equation}
	\mathcal{R}'(0) = \frac{9}{2}. \label{eq:Rlambdader0}
\end{equation}
Finally, at $\hat{\lambda} = -1$, the boundary of the essential spectrum, the differential equation for $V_{in}(\xi;-1)$ has a family of bounded solutions given as
\begin{equation}
	V_{in}(\xi;-1) = \omega(\xi) - \frac{1}{2} + C \left( 2 \tanh(\xi/2) + \frac{10}{3} \omega'(\xi) \right)
\end{equation}
and thus
\begin{equation}
	\mathcal{R}(-1) = 3.
\end{equation}
The above properties are the most important properties of $\mathcal{R}$ for the analysis in this article. A more extensive study of the properties of $\mathcal{R}$ is presented in~\cite[section 4.1]{doelman2002homoclinic} and~\cite[section 5]{Split-Evans}\footnote{Be aware though, that the $\mathcal{R}$ in this article has a different factor in front of it and is defined in terms of $\hat{\lambda}$, whereas the cited articles define it as function of $P := 2 \sqrt{1 + \hat{\lambda}}$.}.

\subsubsection{Finding eigenvalues}

There is also a square root in the right-hand side of~\eqref{eq:weaklycondition}. Thus, real solutions are only possible when $\hat{\lambda} > \hat{\lambda}^H := \frac{H^2 + 4}{4m}$. Moreover, this term can create an additional pole at $\hat{\lambda} = \hat{\lambda}^H$. Depending on the value of $\hat{\lambda}^H$ one of three things can happen.
\begin{itemize}
	\item $\hat{\lambda}^H \leq -1$: The new pole falls in the essential spectrum and the whole form of $\mathcal{R}(\hat{\lambda})$ is visible.
	\item $-1 < \hat{\lambda}^H < -\frac{3}{4}$: The new pole is seen, in addition to the two poles of $\mathcal{R}(\hat{\lambda})$.
	\item $-\frac{3}{4} \leq \hat{\lambda}^H$: The new pole at $\hat{\lambda} = \hat{\lambda}^H$ `replaces' the pole of $\mathcal{R}(\hat{\lambda})$ that is located at $\hat{\lambda} = - \frac{3}{4}$.
\end{itemize}
All three cases lead to different forms for the right-hand side of~\eqref{eq:weaklycondition} -- see Figure~\ref{fig:NLEPconditions}.

Now that we understand the right-hand side, we can determine the eigenvalues for our problem with a simple procedure. For this we compute the (current) value of the left-hand side of~\eqref{eq:weaklycondition} and then we see which values of $\hat{\lambda}$ lead to the same value on the right-hand side. Note that the value for $u_{01}$ is thus crucial in our stability problem. In section~\ref{sec:Pulse-Location-ODE-(A3)} and section~\ref{sec:pulseODE-SaddleNode} we determined $u_{01}$ and thus how it changes in time. When we let the rainfall parameter $a$ decrease over time, we typically see that $\frac{u_{01}}{a}$ increases. From this observation it is natural to study what happens to the eigenvalues when the left-hand side of~\eqref{eq:weaklycondition} increases.

\begin{figure}[t]
		\centering
		\begin{subfigure}[t]{0.25\textwidth}
			\centering

				\includegraphics[width=\textwidth]{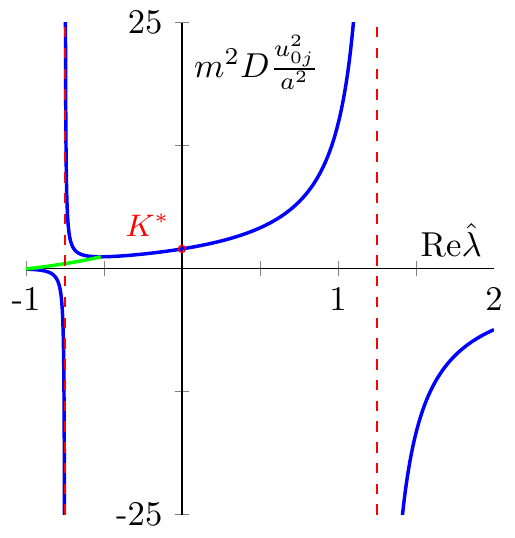}

			\caption{$H=0$, $m=0.45$}\label{fig:NLEPconditions-SN}
		\end{subfigure}
~
		\begin{subfigure}[t]{0.25\textwidth}
			\centering

				\includegraphics[width=\textwidth]{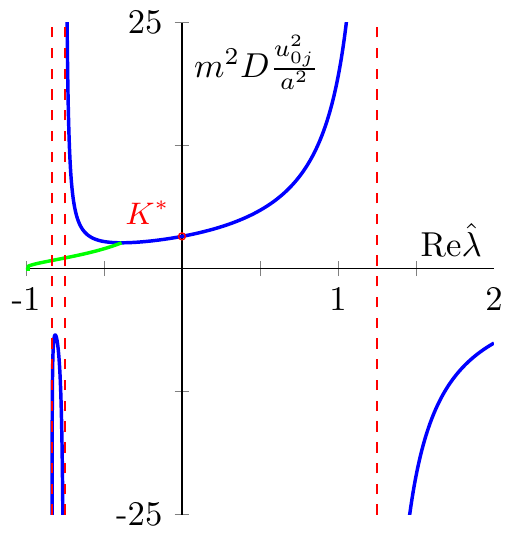}

			\caption{$H=0$, $m = 1.2$}
		\end{subfigure}
~
		\begin{subfigure}[t]{0.25\textwidth}
			\centering

				\includegraphics[width=\textwidth]{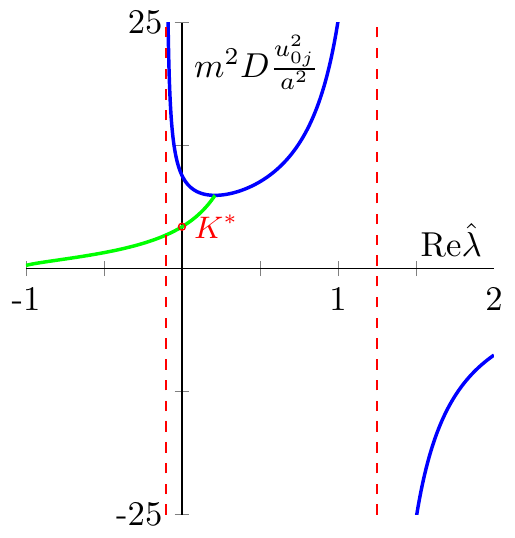}

			\caption{$H=0$, $m = 10$}\label{fig:NLEPconditions-hopf}
		\end{subfigure}	

	\caption{The right-hand side of~\eqref{eq:weaklycondition} for different possible values of $\hat{\lambda}^H$: in (a) $\hat{\lambda}^H = -20 / 9 < -1$, in (b) $\hat{\lambda}^H = - 5 / 6 \in (-1, -3/4)$ and in (c) $\hat{\lambda}^H = - 1 / 10 > - 3/4$.The location of the poles are indicated with dashed red lines. Additionally we show the complex $\hat{\lambda}$ that lead to real values in green and the value of $K^*(m,H)$, see~\eqref{eq:definitionKstar}.}
	\label{fig:NLEPconditions}
\end{figure}

The left-hand side of~\eqref{eq:weaklycondition} is always real-valued and positive. Therefore the right-hand side needs to be as well. Thus for given $H$ and $m$ only a specific set of $\hat{\lambda}$ are possible eigenvalues of the eigenvalue problem~\eqref{eq:weaklycondition} -- precisely those $\hat{\lambda}$ that lead to a real-valued and positive right-hand side in~\eqref{eq:weaklycondition}. This leads to a skeleton in $\mathbb{C}$ on which all eigenvalues necessarily lie. These skeletons come in three qualitatively different forms, which we show in Figure~\ref{fig:skeletons}. The difference between those skeletons is the place where the complex eigenvalues land on the real axis. For a critical value $m_c(H)$ they land precisely on $\hat{\lambda} = 0$. For $m > m_c(H)$ they land to the right of the imaginary axis and for $m < m_c(H)$ they land to the left of it.

The point where the complex eigenvalues land on the real axis, needs to be a local minimum\footnote{Otherwise there is a range of left-hand side values that have four eigenvalues, which is impossible as indicated by a winding number argument.} of the right-hand side of~\eqref{eq:weaklycondition}. Therefore the critical value $m_c(H)$ must be such that this minimum is attained at $\hat{\lambda} = 0$. Differentiating~\eqref{eq:weaklycondition} and setting the result to zero then indicates that $m_c(H)$ must satisfy
\begin{equation*}
	2 \mathcal{R}'(0) \frac{H^2 + 4}{4 m_c(H)}  - \mathcal{R}(0) + 3 = 0.
\end{equation*}
Substitution of~\eqref{eq:Rlambda0} and~\eqref{eq:Rlambdader0} then yields the critical value
\begin{equation}
	m_c(H) = 3 \left( 1 + \frac{H^2}{4} \right).
	\label{eq:stab-mcritical}
\end{equation}
The eigenvalues of~\eqref{eq:weaklycondition} can now simple be read of, and depend on the value of the left-hand side. For small values the eigenvalues approach the points $A_{1,2}$ in Figure~\ref{fig:skeletons}. When the left-hand side is increased, we follow the skeletons and see that the pair of complex eigenvalues changes into two real eigenvalues, points $B_{1,2}$ in Figure~\ref{fig:skeletons}. Increasing the value even further we end up close to the poles, points $C_{1,2}$ in Figure~\ref{fig:skeletons}.

Somewhere along this trajectory a bifurcation has occurred, when an eigenvalue $\hat{\lambda}$ gets a positive real part. For $m < m_c(H)$ this happens for one eigenvalue that has no imaginary part. Thus here we find a saddle-node bifurcation; the corresponding eigenfunction is shown in Figure~\ref{fig:stab-EF-SN}. For $m > m_C(H)$ a pair of complex eigenvalues enters the right-half plane and we thus have a Hopf bifurcation; the corresponding eigenfunction is shown in Figure~\ref{fig:stab-EF-HF}. Finally for $m = m_C(H)$ a codimension 2 Bogdanov-Takens bifurcation occurs. In all of these situations we find that there is a critical value $K^*(m,H)$ of the right-hand side. For values below $K^*(m,H)$ the pulse is stable and for values above it the pulse is unstable. This critical value is given by
\begin{equation}
	K^*(m,H) := \frac{ \mathcal{R}(\hat{\lambda}^*) - 3}{\sqrt{\hat{\lambda}^* + \frac{H^2+4}{4m}}}.
	\label{eq:definitionKstar}
\end{equation}
For $m \leq m_C(H)$ the critical eigenvalue is $\hat{\lambda}^* = 0$ and therefore $K^*(m,H) = 6 \sqrt{\frac{m}{H^2+4}}$. Also note that $K^*(m_c(H),H) = 3 \sqrt{3}$. For $m > m_C(H)$ there is no explicit expression, but for given parameters $m$ and $H$ it is not hard to obtain it by numerical evaluation. Note that this value necessarily needs to be smaller than $6 \sqrt{\frac{m}{H^2+4}}$.

\begin{figure}
\centering
		\begin{subfigure}[t]{0.25\textwidth}
			\centering

				\includegraphics[width=\textwidth]{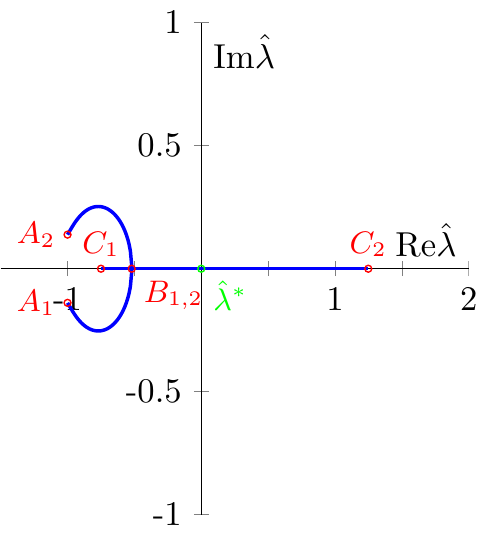}

			\caption{$H=0$, $m=0.45$ ($m < m_c$)}\label{fig:skeletonm045}
		\end{subfigure}
~
		\begin{subfigure}[t]{0.25\textwidth}
			\centering

				\includegraphics[width=\textwidth]{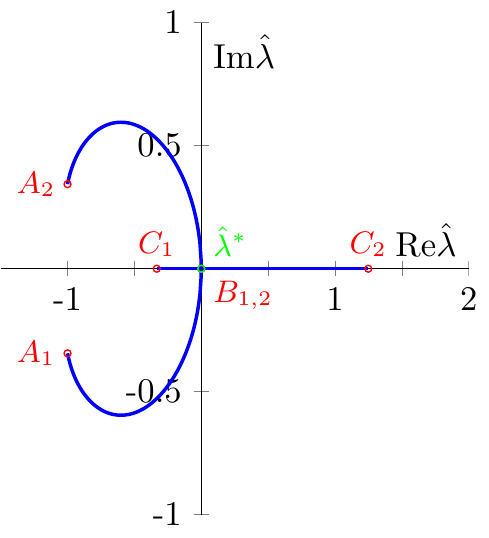}

			\caption{$H=0$, $m = 3$ ($m = m_c$)}\label{fig:skeletonm3}
		\end{subfigure}
~
		\begin{subfigure}[t]{0.25\textwidth}
			\centering

				\includegraphics[width=\textwidth]{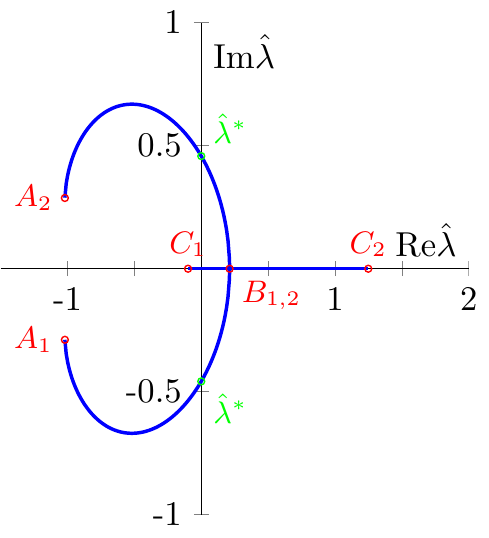}

			\caption{$H=0$, $m = 10$ ($m > m_c$)}\label{fig:skeletonm10}
		\end{subfigure}
	\caption{Plots of the skeletons on which eigenvalues of~\eqref{eq:weaklycondition} necessarily lie. In (a) $H=0$, $m=0.45$ ($m < m_c$), in (b) $H = 0$, $m = 3$ ($m=m_c$) and in (c) $H = 0$, $m = 10$ ($m > m_c$). When the right-hand side of~\eqref{eq:weaklycondition} is small -- e.g. for a high rainfall parameter $a$ -- the eigenvalues are located at $A_{1,2}$ and when the right-hand side is big -- e.g. for a low rainfall parameter $a$ -- the eigenvalues are located at $C_{1,2}$. In between they follow the pictured skeleton, changing from a pair of complex eigenvalues to two real eigenvalues at $B_{1,2}$. The critical, destabilizing eigenvalue $\hat{\lambda}^*$ is also depicted in these figures. Note that in (a) eigenvalues cross the imaginary axis by a Hopf bifurcation and in (c) only by a saddle-node bifurcation; (b) corresponds to a Bogdanov-Takens bifurcation.}
	\label{fig:skeletons}
\end{figure}

\subsubsection{Asymptotic considerations}

Although we now understand the eigenvalue problem~\eqref{eq:weaklycondition} completely for any set of parameters, it is useful to still study the asymptotic cases. There are two parameter regimes that will play a role in the analysis of multi-pulse solutions, (1) $\frac{H^2+4}{4m} \gg 1$ (i.e. $m \ll 1 + H^2/4$) and (2) $\frac{H^2+4}{4m} \ll 1$ (i.e. $m \gg 1 + H^2/4$).

\begin{itemize}
	\item[(1)] In the first case, we see that~\eqref{eq:weaklycondition} reduces to
\begin{equation*}
	\frac{m^2 D}{a^2} u_{01}^2 = \frac{\mathcal{R}(\hat{\lambda}) - 3}{\sqrt{ \frac{H^2+4}{4m}}}.
\end{equation*}
Rewriting this gives the condition
\begin{equation*}
	\sqrt{1 + H^2/4}\ \frac{m \sqrt{m} D}{a^2} u_{01}^2 = \mathcal{R}(\hat{\lambda}) - 3.
\end{equation*}
From our previous analysis we know that the eigenvalues $\hat{\lambda}$ have negative real parts, when the left-hand side is small enough. Thus assumptions (A3) and (A5) now guarantee that the left-hand side is (asymptotically) small and therefore that the pulse solution is stable. Only when these size assumptions are violated is it possible for the pulse to become unstable, in this regime. Moreover, destabilisation happens through a saddle-node bifurcation in this regime.
	\item[(2)] In the second case, equation~\eqref{eq:weaklycondition} reduces to
\begin{equation*}
	\frac{m^2 D}{a^2} u_{01}^2 = \frac{\mathcal{R}(\hat{\lambda})-3}{\sqrt{\hat{\lambda}}}.
\end{equation*}
This time we see that assumptions (A4) and (A6) indicate that the left-hand side is (asymptotically) small and therefore that the pulse is stable. When these size assumptions are violated the pulse may become unstable, via a Hopf bifurcation.
\end{itemize}

\begin{figure}
	\centering
	\begin{subfigure}[t]{0.3 \textwidth}

		\centering

			\includegraphics[width=\textwidth]{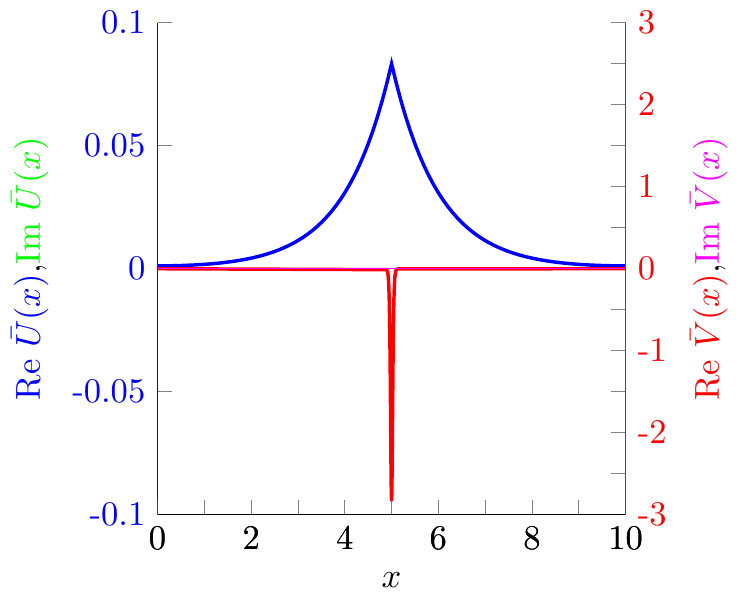}

		\caption{}
		\label{fig:stab-EF-SN}
	\end{subfigure}	
~
	\begin{subfigure}[t]{0.3 \textwidth}
		\centering

			\includegraphics[width=\textwidth]{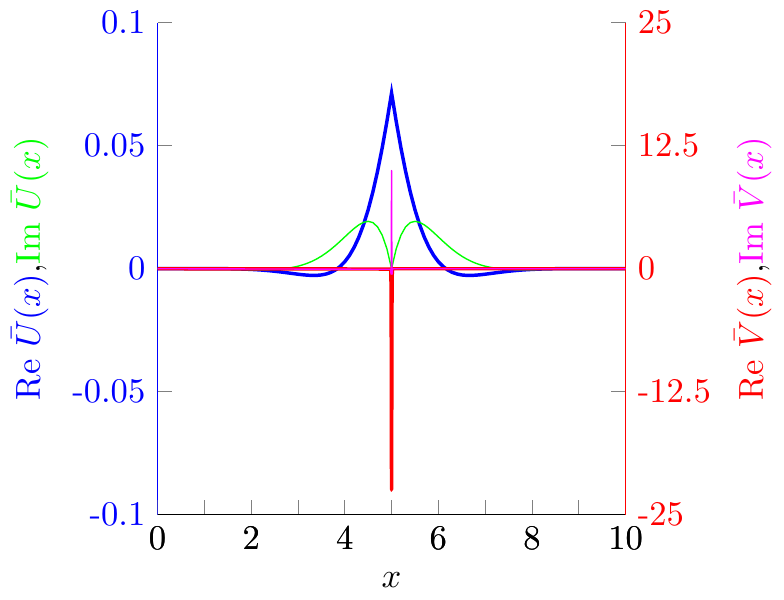}

		\caption{}
		\label{fig:stab-EF-HF}

	\end{subfigure}

	\caption{Approximation of an analytically obtained eigenfunction of a (single) pulse when we encounter a saddle-node bifurcation (a) or a Hopf bifurcation (b). The real part of the $U$-value is given in blue, the imaginary part in green, the real part of the $V$-pulse in red and its imaginary part in pink. Parameter values $H = 0$, $D = 0.01$, $L = 10$ and $m = 0.45$, $a = 0.19032$ (in a) and $m = 10$, $a = 2.1065$ (in b).}
	\label{fig:STAB-weak-eigenfunction}
\end{figure}

\subsection{Stability of $N$-pulse solutions}\label{sec:stab-N-pulses}

This section is devoted to the stability of multi-pulse solutions and pulse-solutions on bounded domains. The pulses in these solutions interact with each other and the boundary and are therefore moving in space, see section~\ref{sec:existence}. In the stability problem these interactions can show up as well, leading to a more involved stability problem than in the previous section. We consider $N$-pulse solutions, with pulses located at $P_1(t) < \ldots < P_N(t)$. Similar to the existence problem of these solutions, we again have an inner region, $I_j^{in}$, near each pulse and outer regions $I_{j}^{out}$ between each pulses and between the first/last pulse and the boundary.

The stability problem in the outer region is again described by equation~\eqref{eq:stability-ODE-outer}. Here we again see an important distinction between the $m \gg 1 + H^2/4$ and the $m \ll 1 + H^2/4$ situations. In the former case the eigenvalue $\hat{\lambda}$ has a leading order role in the outer problem, whereas in the latter case it only has a higher order role. Moreover, in the situation with $m \gg 1 + H^2/4$ the pulses are far apart in the stability problem. As a result the background state $(\bar{U},\bar{V}) = (0,0)$ is approached in between pulses (to leading order). Therefore there is no direct interaction between the pulses in the stability problem in this regime. This leads to a decoupled stability problem in which we can treat the stability of each pulse separately. In the other situations, when $m \leq \mathcal{O}(1 + H^2/4)$, this effect does not occur and the stability problem of all pulses is coupled. We will consider these situations separately.

\subsubsection{$m \gg 1 + H^2/4$ -- decoupled stability problem}\label{sec:DSP}

Solving the homogeneous ODE in the outer region, equation~\eqref{eq:stability-ODE-outer}, for $\tilde{U}$ gives the general solution
\begin{equation}
	\bar{U}(x) = C_1\ e^{\frac{1}{2} [-H - \sqrt{H^2+4(1+m\hat{\lambda})}] x} + C_2\ e^{\frac{1}{2}[-H + \sqrt{H^2+4(1+m\hat{\lambda})}] x},
\end{equation}
where $C_1$ and $C_2$ are some constants. For easier notation in the forthcoming computations, we let the solution in the outer region between the $j$-th and the $j+1$-th pulse be denoted, equivalently, by
\begin{equation}
	\bar{U}_j(x) = S_{1j}\ e^{ \frac{1}{2}\left[-H-\sqrt{H^2+4(1+m\hat{\lambda})}\right](x-P_j)} + S_{2j}\ e^{\frac{1}{2}\left[-H+\sqrt{H^2+4(1+m\hat{\lambda})}\right](x-P_{j+1})}, \label{eq:STAB-outerregions}
\end{equation}
where $S_1$ and $S_2$ are constants. We can also define the solution in the outer regions with $x < P_1$ and $x > P_N$ in a consistent manner with the definition of $P_0$ and $P_{N+1}$ as described in section~\ref{sec:Pulse-Location-ODE-(A3)}. Since $m \gg 1 + H^2/4$, we see that $- H \pm \sqrt{H^2 + 4 (1 + m \hat{\lambda})} \gg 1$, regardless of the size of $H$ compared to $m$. Therefore $\bar{U}_j(P_j) = S_{1j}$ and $\bar{U}_j(P_{j+1}) = S_{2j}$ to leading order. Thus we can approximate the outer solutions by setting the constants $S_{1j}$ and $S_{2j}$ as follows:
\begin{equation} S_{1j} = \rho_j;\hspace{1cm} S_{2j} = \rho_{j+1};\hspace{1cm} S_{10} = 0;\hspace{1cm} S_{2N} = 0,\label{eq:STAB-decoupled-consts}\end{equation}
where $\rho_j$ is (an approximation of) the value $\bar{U}_j(P_j)$. Note that the thus constructed outer solution $\bar{U}$ automatically is continuous in each pulse location, again to leading order. Similar to the $1$-pulse case, we need to impose jump conditions on the derivative $\bar{U}_x$ at each pulse location. In the outer regions this jump is approximated by
\begin{equation}
	\Delta_{out} \tilde{U}_x(P_j) := \tilde{U}_x(P_j^+) - \tilde{U}_x(P_j^-) = - \rho_j \sqrt{H^2 + 4(1+m\hat{\lambda})}.
\end{equation}
Note the similarities with equation~\eqref{eq:stab-homoclinic-outerjump}. The jump in the inner region can be computed at each pulse. This computation is identical as for the homoclinic pulse in section~\ref{sec:stab-1-pulse}. Hence we obtain (see equation~\eqref{eq:weak-jump-1}):
\begin{equation}
\Delta_{in} \bar{U}_x(P_j) = \frac{a^2}{m\sqrt{m}D} \frac{\rho_j}{u_{0j}^2} \hat{C}(\hat{\lambda}). \label{eq:weak-jump-N}
\end{equation}
where $C(\hat{\lambda})$ is defined in equation~\eqref{eq:stab-definition-Clambda}. Equating both descriptions of the jump gives us $N$ equation that a solution of the stability problem should satisfy:
\begin{equation}
	\frac{a^2}{m\sqrt{m}D} \frac{\rho_j}{u_{0j}^2} \hat{C}(\hat{\lambda}) = - \rho_j \sqrt{H^2 + 4 (1+m\hat{\lambda})}.
\end{equation}
This condition is immediately satisfied when $\rho_j = 0$. After division by $\rho_j$ it is clear that the left-hand side depends on the pulse number $j$, whereas the right side does not. Therefore, we know the eigenfunctions of the linear stability problem generically have one $n \in \{1,\ldots, N\}$ such that $\rho_n \neq 0$ and $\rho_j = 0$ for all $j \neq n$. Thus the pulses are decoupled in the stability problem and eigenfunctions are always localised near a single pulse. A solution with $\rho_n \neq 0$ is only a solution to the stability problem if the jump condition is satisfied, i.e. if it satisfies
\begin{equation}
\frac{a^2}{m\sqrt{m}D} \frac{1}{u_{0n}^2} \int_{-\infty}^\infty (\omega^2 - 2 \omega V_{in}) ds = - \sqrt{H^ 2 + 4(1+m\hat{\lambda})}. \label{eq:LinStabWeakN}
\end{equation}
Note that this is precisely the same condition as we found for the stability of a homoclinic pulse in equation~\eqref{eq:LinStabWeak1}. Thus we can use the conclusions from that case here. That is, eigenvalues necessarily need to lie on the skeleton given in Figure~\ref{fig:skeletonm10}. Moreover, the $n$-th eigenfunction has an eigenvalue with positive real part when $K_n := \frac{m^2 D}{a^2} u_{0n}^2 > K^*(m,H)$. Therefore when $K_j < K^*(m,H)$ for all $j \in \{1,\ldots,N\}$ we know that the solution is stable. However, if $K_k > K^*(m,H)$ for some $k$, we know that the $N$-pulse solution is unstable. More specifically we know that the corresponding eigenfunction has $\rho_k \neq 0$ and consists of a localised pulse, located at $x = P_k$. This linear reasoning now suggests that, as the pattern is destabilised, the $k$-th pulse should disappear.

In degenerate cases, it is possible that multiple pulses have the same $K_j$-value, say the value $\bar{K}$. If that happens, then there exists eigenfunctions that have more than one non-zero $\rho_j$-value. That is, the eigenspace corresponding to the corresponding eigenvalue $\hat{\lambda}$ is multidimensional. To really get a grip on what's happening at a bifurcation in these cases, we need to zoom in on the corresponding eigenvalues $\hat{\lambda}$, where the stability problem becomes a coupled stability problem once again. This has already been done in the case of spatially periodic pulse configurations~\cite{Hopf-Dances}, where Floquet theory has been used to find the form of the possible eigenfunctions. From this we know that in these situations -- when there are multiple pulses with the same $K_j$-value -- the eigenvalues are asymptotically close together, though still separated. Moreover, the eigenfunctions become combinations of the single-pulse eigenfunctions that we have already encountered. In fact, in~\cite{Hopf-Dances}, it is found that the most unstable eigenfunction will always be a period-doubling Hopf bifurcation (when $\gamma = -1$) or a full desertification bifurcation (when $\gamma = 1$). At present, it is not clear how we get from the simple, one-pulse eigenfunctions to these more involved (periodic) eigenfunctions as patterns evolve towards regularity. These two types of destabilisations are intertwined in an involved way, which is explained by the appearance of `Hopf dances'~\cite{Hopf-Dances,Ddestab}. We refrain from going in the details here.

\subsubsection{$m \leq \mathcal{O}(1 + H^2/4)$ -- coupled stability problem}\label{sec:CSP}

As before, we can use the outer solution~\eqref{eq:STAB-outerregions}. However, we can no longer use the approximations in~\eqref{eq:STAB-decoupled-consts}, which leads to more involved eigenfunctions that have localised structures at all pulse locations. To find eigenfunctions, we need to understand when a function is an eigenfunction of this (now) coupled stability problem. Foremost, we need to have continuity of $\bar{U}$ at each pulse location, i.e. $\bar{U}_j(P_j) = \bar{U}_{j-1}(P_j)$. Secondly at each pulse location there will be -- as before -- a jump in the derivative $\bar{U}_x$, of size $\Delta_{in} \bar{U}_x(P_j) = \frac{a^2}{m \sqrt{m} D} \frac{\rho_j}{u_{0j}^2} \hat{C}(\hat{\lambda})$, where $\rho_j := \bar{U}(P_j)$ and $\hat{C}$ as in~\eqref{eq:stab-definition-Clambda}.

With these two conditions it is possible to find the value for the constants $S_{1j+1}$ and $S_{2j+1}$ when we are given the values of $S_{1j}$ and $S_{2j}$ and the eigenvalue $\hat{\lambda}$. Thus, when given the value $S_{10}$ and $S_{20}$ that satisfy the left boundary condition, it is possible to deduce the constants $S_{1N}$ and $S_{2N}$ by using the algebraic relations coming from the continuity of $\bar{U}$ and the jump in $\bar{U}_x$ at each pulse location. The concept of finding the eigenfunctions is now simple: the eigenvalues $\hat{\lambda}$ are precisely those values that lead to constants $S_{1N}$ and $S_{2N}$ that satisfy the right boundary conditions. Note that when we are using periodic boundary conditions things get a bit more involved. In this case we can only fix either $S_{10}$ \emph{or} $S_{20}$. Say we've fixed $S_{10}$. This time we must then find a \emph{combination} of $S_{20}$ and $\hat{\lambda}$ that lead to $S_{1N}$ and $S_{2N}$ that are identical to $S_{10}$ and $S_{20}$ respectively.

We recall that $V_{in}$ can be found explicitly, as function of $\hat{\lambda}$ with the use of hypergeometrical functions~\cite{doelman1998stability, Split-Evans}, as we have seen before in section~\ref{sec:RlambdaProperties}. Therefore it is possible to find good approximations of the eigenfunctions -- and the corresponding eigenvalues -- using this outlined method. Depending on the precise configuration of the $N$ pulses and the parameters of the model, the form of the eigenfunctions changes. Because these eigenfunctions have localised structures at all of the pulse locations -- unlike in the case $m \gg 1 + H^2/4$ -- it is in general hard to draw strong conclusions about the dynamics of the pattern beyond the linear destabilisation, i.e. what happens when an eigenvalue crosses the imaginary axis and the solution `falls' off the manifold $\mathcal{M}_N$. In section~\ref{sec:NumericalSimulations} we will see that there essentially are two distinct possibilities: when pulses are irregularly arranged and when the pulses form a regular pattern.

\subsubsection*{Eigenvalues when $m \ll 1 + H^2/4$}

Even for the most simple $N$-pulse configurations it is hard to find the correct values for $S_{1j}$ and $S_{2j}$ by hand. It is, however, possible to say something about the \emph{eigenvalues} in the asymptotic case $m \ll 1 + H^2/4$. When $m \ll 1 + H^2/4$ we see that the exponents in~\eqref{eq:STAB-outerregions} become independent of $\hat{\lambda}$. To be more precise we find $\bar{U}_j$ is given up to exponentially small errors by
\begin{equation*}
	\bar{U}_j(x) = S_{1j} e^{ \frac{1}{2} [-H + \sqrt{H^2 + 4}]} + S_{2j} e^{\frac{1}{2} [-H - \sqrt{H^2+4}]}.
\end{equation*}
Therefore the jump of the derivative $\bar{U}_x$ at each pulse location, as dictated by the stability problem in the outer regions, becomes independent of $\hat{\lambda}$ as well:
\begin{align*}
	\Delta_{out}\bar{U}_x (P_j) =
&\frac{1}{2} \left( H (S_{2j-1}-S_{1j}) - \sqrt{H^2+4}(S_{1j}+S_{2j-1}) \right) + \frac{1}{2}  H \left( S_{1j-1} e^{\frac{1}{2}[-H-\sqrt{H^2+4}]\Delta P_{j-1}} - S_{2j} e^{-\frac{1}{2}[-H+\sqrt{H^2+4}]\Delta P_j} \right) \\
& + \frac{1}{2} \sqrt{H^2+4} \left( S_{1j-1} e^{\frac{1}{2}[-H-\sqrt{H^2+4}]\Delta P_{j-1}} + S_{2j} e^{-\frac{1}{2}[-H+\sqrt{H^2+4}]\Delta P_j} \right)
\end{align*}
As always we need this jump to be equal to the jump as indicated by the fast, inner regions. That is, we need to have
\begin{equation*}
\Delta_{out} \bar{U}_x(P_j) = \Delta_{in} \bar{U}_x(P_j) = \frac{a^2}{m \sqrt{m} D} \frac{\rho_j}{u_{0j}^2} \hat{C}(\hat{\lambda}),
\end{equation*}
where -- as before in~\eqref{eq:STAB-decoupled-consts} -- $\rho_j = \bar{U}(P_j)$; however this time $\rho_j$ does not (implicitly) depend on $\hat{\lambda}$. Note that the only place where $\hat{\lambda}$ comes into play is in the term $\hat{C}(\hat{\lambda})$. This enables us to rearrange the terms such that we find the eigenvalue condition
\begin{equation*}
\frac{m \sqrt{m} D}{a^2} \frac{u_{0j}^2}{\rho_j} \Delta_{out} \bar{U}_x(P_j) = \hat{C}(\hat{\lambda}).
\end{equation*}
Now we note that the right-hand side of this expression does depend only on $\hat{\lambda}$ and not on the pulse $j$ and the left-hand side does only depend on the pulse $j$ and not on $\hat{\lambda}$. Since we have a similar jump condition at all of the pulse locations, we know that the constants $S_{1j}$ and $S_{2j}$ of an eigenfunction must be chosen such that the left-hand side of this equation is the same for all $N$ pulses. That is, we can define
\begin{equation}
	\hat{C}^* = \frac{u_{0j}^2}{\rho_j} \Delta_{out} \bar{U}_x(P_j).
\end{equation}
An eigenvalue must now satisfy the equation
\begin{equation}
	- \frac{ m \sqrt{m} D}{a^2} \frac{\hat{C}^*}{2} = \mathcal{R}(\hat{\lambda}) - 3.
	\label{eq:NPulseConditionSN}
\end{equation}
The right-hand side of this equation is similar to the condition~\eqref{eq:weaklycondition} that we studied for the stability of homoclinic pulses in the limit $m \ll 1 + H^2/4$. Therefore the right-hand side of~\eqref{eq:NPulseConditionSN} is represented by Figure~\ref{fig:NLEPconditions-SN} up to a multiplicative constant and eigenvalues necessarily need to lie on a skeleton, see Figure~\ref{fig:skeletonm045}. The reasoning of said section can be applied here immediately as well: if $-\frac{m \sqrt{m} D}{a^2} \frac{\hat{C}^*}{2}$ is small enough the pulse configuration is stable and when it is too big the configuration becomes unstable. The destabilisation now occurs via a saddle-node bifurcation.

Finally we notice that the left-hand side of~\eqref{eq:NPulseConditionSN} is of order $\O\left( \frac{m\sqrt{m}D}{a^2}\right)$. Therefore if $\frac{m\sqrt{m}D}{a^2} \ll 1$ we know that the configuration necessarily is stable and if $\frac{m\sqrt{m}D}{a^2} \gg 1$ it is unstable. When $\frac{m \sqrt{m}D}{a^2} = \O(1)$ the stability can change and a (saddle-node) bifurcation occurs. A precise computation of the value $C^*$ is necessary to establish stability.

%%%%%%%%%%%
%% SECTION ABOUT NUMERICAL SIMULATIONS
%%%%%%%%%%%

\section{Numerical Simulations} \label{sec:NumericalSimulations}

In this section, we study the behaviour of pulse solutions using the methods developed in the previous sections. We employ our method -- in the form of a Matlab code -- to determine the dynamics of pulses via the ODE as explained in section~\ref{sec:pulseODE-SaddleNode} -- note that this thus does \emph{not} assume $U(P_j) = 0$. Simultaneously, we determine the evolution of the quasi-steady spectrum associated to the evolving multi-pulse configuration. Thus we check whether the pulse configuration approaches the boundary of the $N$-pulse manifold $\mathcal{M}_N$ beyond which it is no longer attracting in the PDE flow -- see section~\ref{sec:linstabsections}. When this happens, we deduce from the eigenfunction analysis which specific pulse -- or pulses -- of the multi-pulse configuration destabilises and in our method we then simply cut out these pulses. This essentially means that we have to assume that the associated quasi-steady bifurcation is subcritical, and thus that the pulse/pulses annihiliate at a fast time scale. Note that this is based on numerical observation in all literature on pulse dynamics in Gray-Scott and Gierer-Meinhardt type models, see~\cite{veerman2015breathing} for a mathematical analysis of this bifurcation in the homoclinic $1$-pulse context (that establishes the subcritical nature of the bifurcation in the Gierer-Meinhardt setting) and~\cite{veerman2013pulses, veerman2015breathing} for a more thorough discussion and examples of systems that do not satisfy this condition.

In our code, the determination of the quasi-steady spectrum can be done in two different ways:
\begin{itemize}
	\item[(DSP)] We treat the quasi-steady spectral problem as if it were a decoupled stability problem, see section~\ref{sec:DSP};
	\item[(CSP)] We treat the quasi-steady spectral problem as if it were a coupled stability problem, see section~\ref{sec:CSP}.
\end{itemize}
There are pros and cons to both methods. The main benefit of (DSP) is that is easy to determine which pulse disappears when a bifurcation happens. On the other hand, this simplification is only valid in the asymptotic region in which $m \gg 1 + H^2/4$ (and when pulses are distinguishable, see section~\ref{sec:DSP}). However, we will see in this section that it also provides useful information when $m \leq \mathcal{O}(1+H^2/4)$. The other method, (CSP), does hold true for all $m$ (and all configurations). However, the eigenfunctions are no longer restricted to a single pulse and can become quite involved. This makes it significantly harder to determine which -- and especially how many -- pulses annihilate as we will see later in this section. Moreover, the (CSP) approach becomes unreliable when the eigenfunctions get large spikes at one pulse location (i.e. for $m \gg 1 + H^2/4$) and when eigenvalues are close, as the underlying root-finding Newton scheme cannot easily distinguish these closely packed eigenvalues.

In our numerical studies in this section we employ our aforementioned approach and test it against direct simulations of the full PDE. We will show that our method is in general good -- even in situations for which our analysis should normally not hold -- but we will also point out its limitations. The outcome of these endeavours will be captures in several conjectures throughout the text. Our numerical study starts with pulse solutions on flat terrains ($h(x) \equiv 0$) in section~\ref{sec:NUM-flatTerrain}. We focus here on the difference between irregular and regular configurations. Subsequently, in section~\ref{sec:NUM-slopedTerrain}, we investigate the effect of topography. Here we encounter downhill movement -- which a priori is counter intuitive from the ecological point of view -- and we study the infiltration of vegetation into bare soil among other things.

In all of our simulations -- both the simulations using our method and the simulations of the full PDE -- we found Hopf bifurcations when $m$ was large and saddle-node bifurcations when $m$ was small. In cases of a Hopf bifurcation, the PDE simulations show a (fast) vibration of the pulses height. In cases of saddle-node bifurcation this vibration was absent. Moreover, the computation of the $u_{0j}$-values, as explained in section~\ref{sec:pulseODE-SaddleNode} was slower. This indicates that the Jacobian determinant is very small, which happens near a (existence) bifurcation -- precisely as expected with a saddle-node bifurcation.

\begin{figure}
	\centering
		\begin{subfigure}[t]{0.4 \textwidth}
			\centering
			\includegraphics[width = \textwidth]{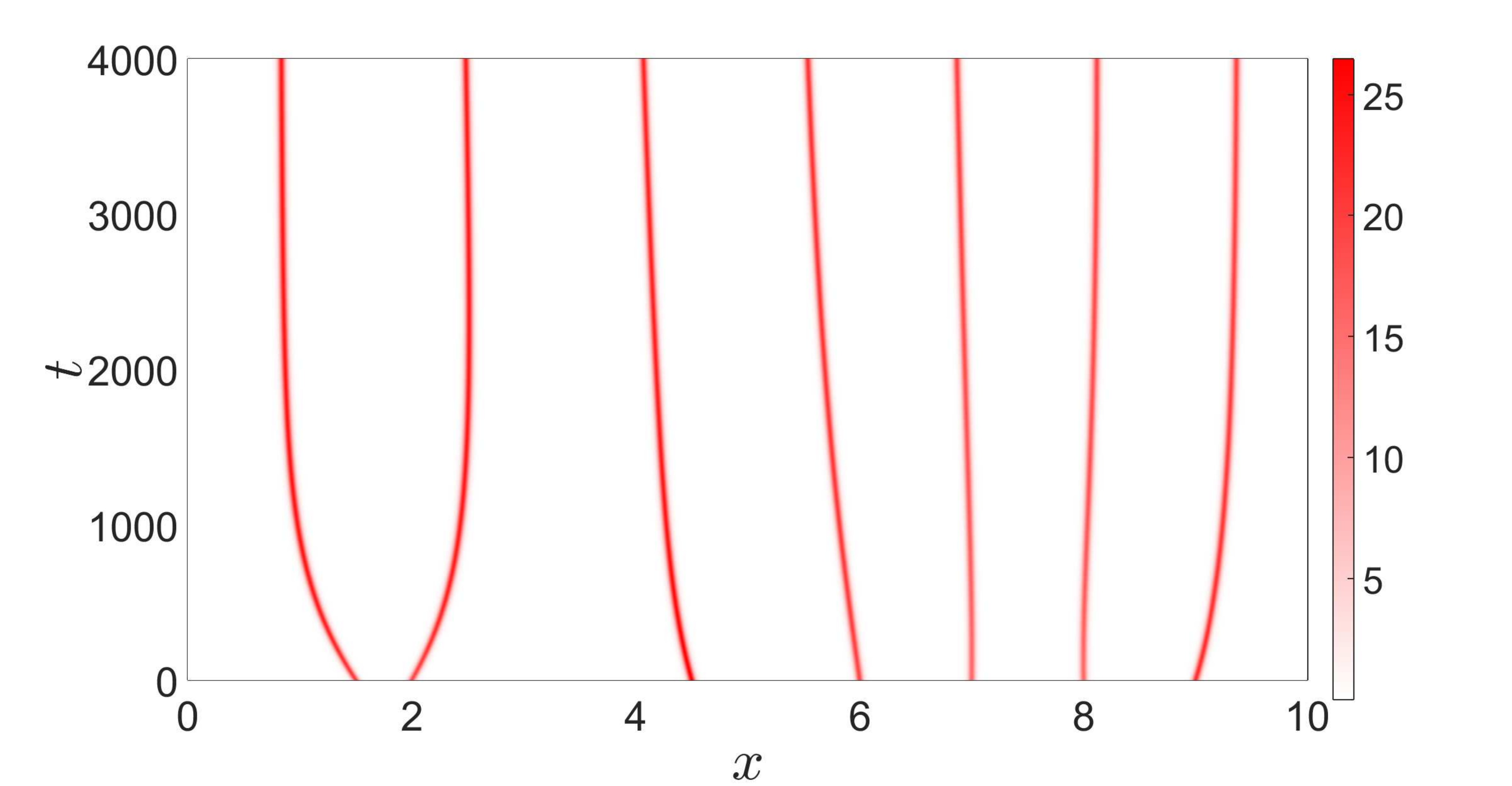}
		\caption{ODE}
		\end{subfigure}
~
		\begin{subfigure}[t]{0.4 \textwidth}
			\includegraphics[width = \textwidth]{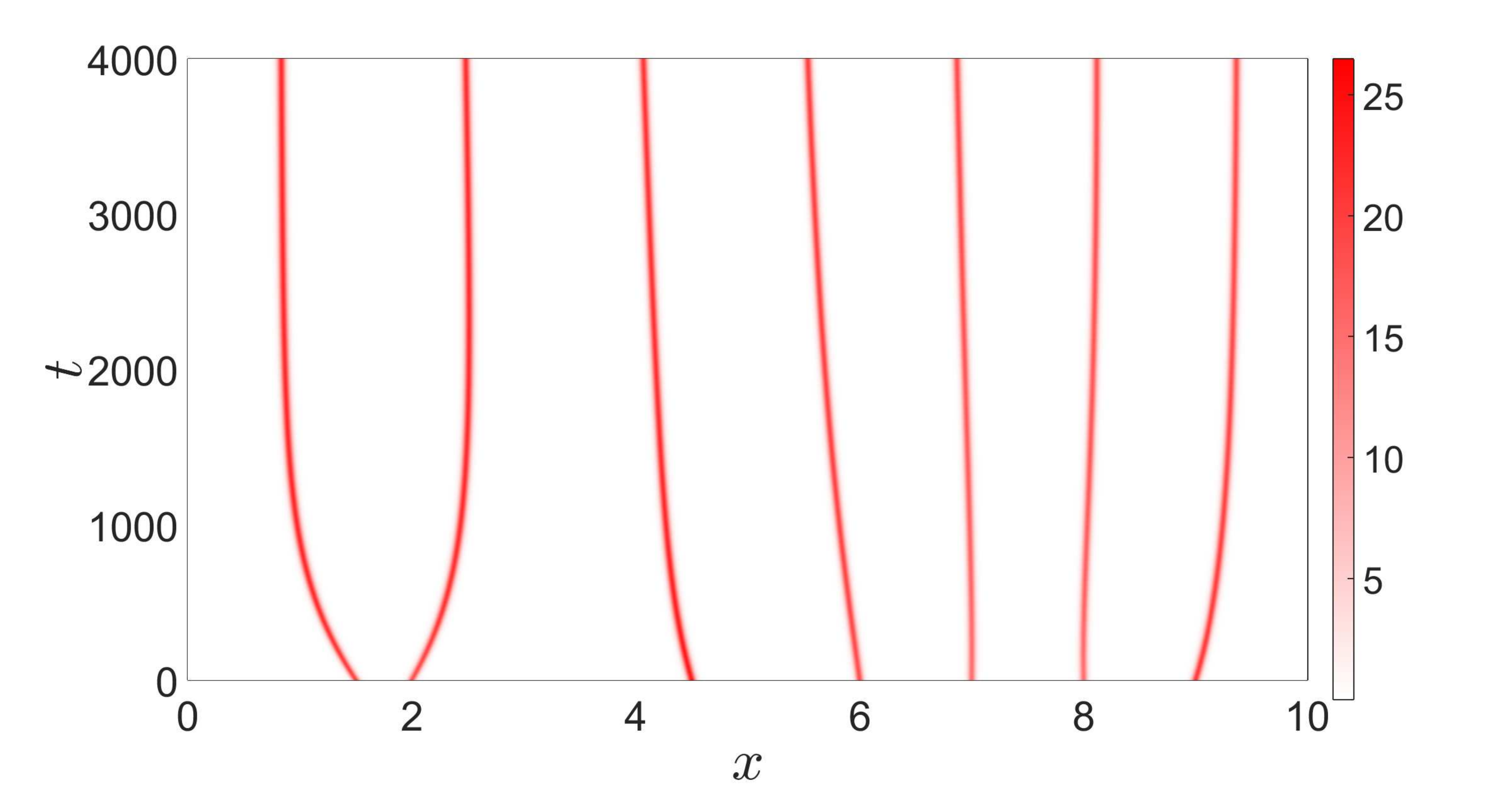}
		\caption{PDE}
		\end{subfigure}
	\caption{Plot of the vegetation $V$ obtained from simulations using the pulse-location ODE (in a) and the full PDE (in b). In these plots the shade of red indicates the concentration of vegetation, with darker meaning more vegetation is located at that position. In both simulations we have used a constant height function $h(x) \equiv 0$ and parameters $m = 0.45$ and $a = 0.5$, $D = 0.01$ and $L = 10$ (ecological relevant parameter values~\cite{klausmeier1999regular, Eric-Beyond-Turing}), and the starting configurations are the same. From these plots it is clear that the ODE and the PDE simulation agree to a great extend, and that the seven pulses evolve to a equally distributed seven pulse solution.}
	\label{fig:FLAT-irrToReg}
\end{figure}
\subsection{Flat terrains}\label{sec:NUM-flatTerrain}

On flat terrains on a bounded domain $[0,L]$ our asymptotic analysis in section~\ref{sec:ODEFixedPoints} -- valid for $\frac{m \sqrt{m} D}{a^2} \ll 1$ -- indicates that a regularly spaced configuration is a stable fixed point of the pulse-location ODE~\eqref{eq:ODEc1}. Both the direct PDE simulation as simulations using our method indicate that these regular patterns are still fixed points and that all $N$-pulse solutions evolve to these regular configurations -- even when $\frac{m \sqrt{m} D}{a^2} = \mathcal{O}(1)$. In Figure~\ref{fig:FLAT-irrToReg} we give an example of this for the situation of a $7$-pulse solution starting from an irregular configuration. So the dynamical movement drives pulse solutions to a regularly spaced configuration (on flat terrains). At the same moment, the flow of the PDE determines the boundaries of the manifold $\mathcal{M}_N$, where $N$-pulse solutions stop to exist and pulses may disappear. We want to understand the bifurcations that occur when a pulse configuration becomes unstable. For this we took the rainfall parameter $a$ as our main bifurcation parameter. In our simulations we let the rainfall parameter decrease such that a bifurcation occurs\footnote{For irregular patterns we need to make sure that the bifurcation occurs fast enough that the pulses have not moved to form a regular pattern yet.}. Our study shows a significant difference between destabilisations of irregular patterns and regular patterns.

\subsubsection{Irregular patterns -- irregular arranged pulses}
\label{sss:irregular}

Two typical configurations with irregularly placed pulses are shown in Figures~\ref{fig:STAB-SS-random-ex} and~\ref{fig:STAB-SS-random-ex-Hopf}. In these configurations we see that the $V$-pulses have varying heights. Consequently the values for $u_{0j}$ differ, with the highest $V$-pulses having the lowest values $u_{0j}$. We have determined the eigenfunction near the bifurcation point for these situations, using the (CSP) method. In all our studies of similar irregular configurations, we have found that the eigenfunctions always look the same (see Figures~\ref{fig:STAB-SS-random-stab} and~\ref{fig:STAB-SS-random-stab-Hopf}): there is a big $\bar{V}$-peak at the location of the pulse with the highest $u_{0j}$-value and the neighbouring pulses have a smaller $\bar{V}$-pulse in the opposite direction. If we -- for a moment -- assume that the pulses are not coupled (like was the case in section~\ref{sec:DSP}), it is clear that the pulse with the highest $u_{0j}$-value is the most unstable one. Indeed, this pulse has the highest value $K_j = m^2 D \frac{u_{0j}^2}{a^2}$, indicating that it is the most unstable one. The corresponding eigenfunction has a single $\bar{V}$-pulse located at this pulses location. When the pulses in the stability problem are coupled, they are relatively close-packed. Consequently, we find (relatively small) $\bar{V}$-pulses for the neighbouring pulses as well. Nevertheless this suggests that such kind of eigenfunctions leads to the death of the pulse with the highest $u_{0j}$-value. Note that linear stability theory does not guarantee this (at all): a priori it cannot be excluded that the neighbouring pulses (also) disappear.

\begin{figure}[t!]
	\centering

		\begin{subfigure}[t]{0.27 \linewidth}
			\centering

			\includegraphics[width=\textwidth]{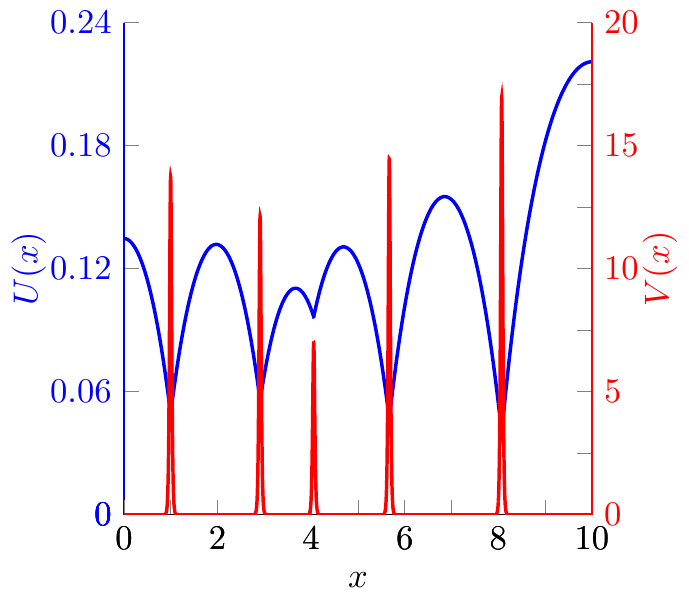}

			\caption{sideview}\label{fig:STAB-SS-random-ex}
		\end{subfigure}
		\begin{subfigure}[t]{0.27 \linewidth}
			\centering

			\includegraphics[width=\textwidth]{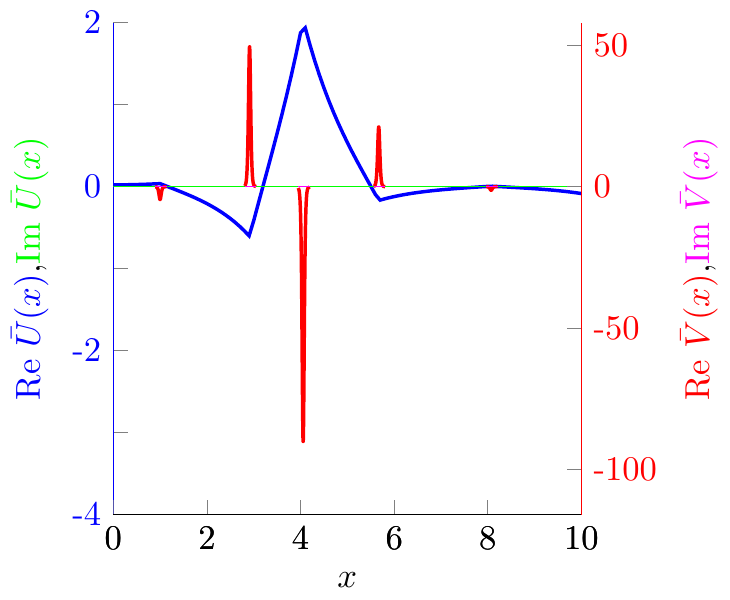}

			\caption{destabilising eigenfunction}\label{fig:STAB-SS-random-stab}
		\end{subfigure}
		\begin{subfigure}[t]{0.4\textwidth}
			\centering
			\includegraphics[width=\linewidth]{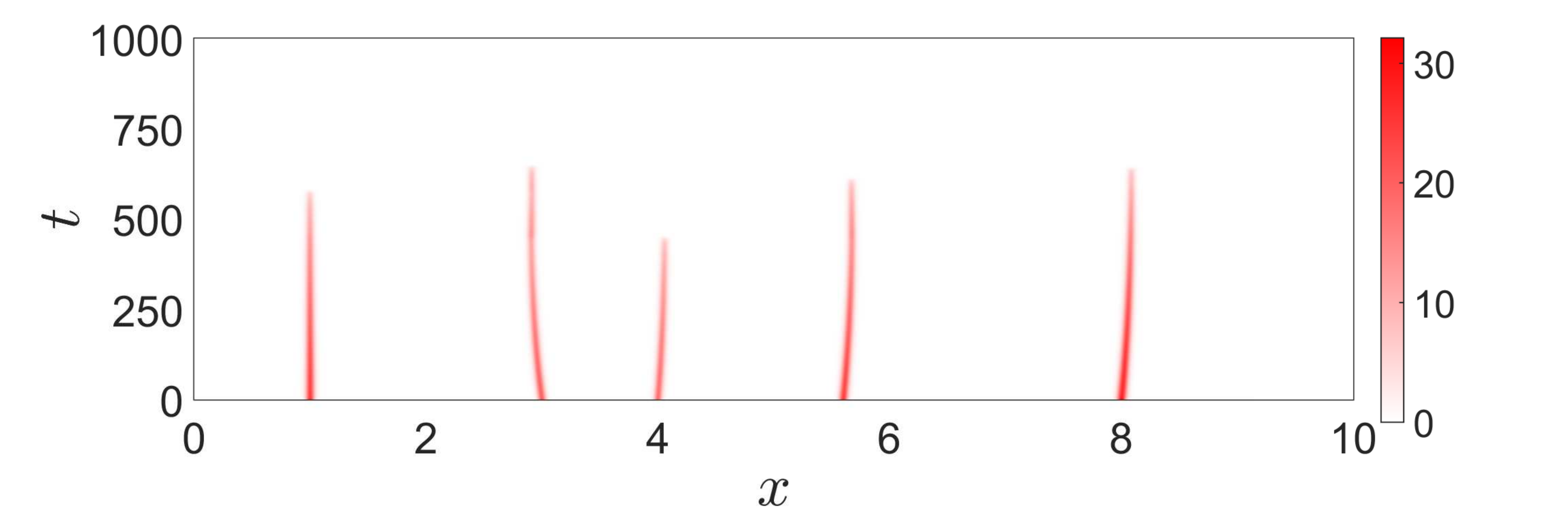}
			\caption{simulation of the full PDE}\label{fig:STAB-SS-random-patternplot}
		\end{subfigure}

	\caption{Sideview (a) of a $5$-pulse configuration near a bifurcation and the destabilising eigenfunction (b) for this bifurcation (on a bounded domain with Neumann boundary conditions) and a simulation of the full PDE (c) with $m = 0.45$, $h(x) \equiv 0$, $D = 0.01$, $L = 10$ and $a_{bif} = 0.296$. The analytically determined quasi-steady eigenvalue is $\hat{\lambda} \approx 0$, suggesting a saddle-node bifurcation (in agreement with our theory for $m < m_c(H)$, see~\eqref{eq:stab-mcritical} and the surrounding text). Here we can clearly see that the most unstable pulse is the pulse with the lowest $V$-peak. This is also found in the eigenfunction plot where this pulse has the highest peak. In the simulation of the PDE we see that our prediction was correct: the third pulse is the first to become unstable. In the PDE simulation we let the rainfall parameter $a$ decrease starting from $a = 0.5$.}
	\label{fig:STAB-SS-random}
\end{figure}

\begin{figure}[t!]
	\centering
		\begin{subfigure}[t]{0.27 \linewidth}
			\centering

			\includegraphics[width=\textwidth]{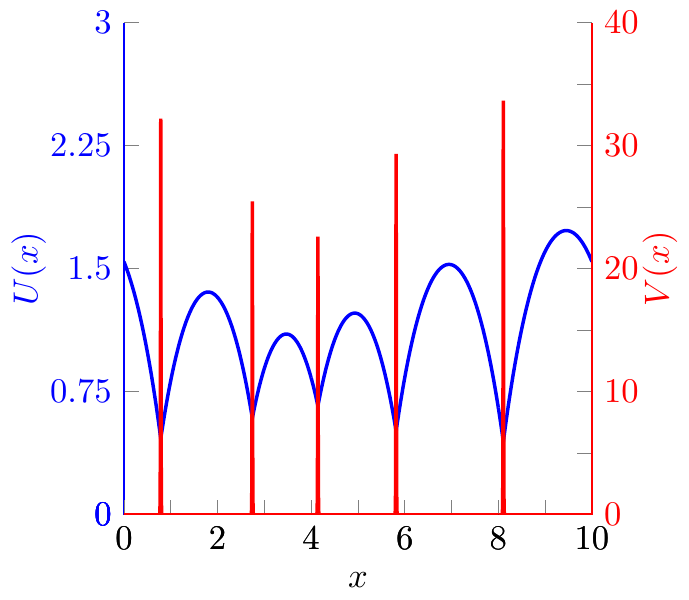}

			\caption{sideview}\label{fig:STAB-SS-random-ex-Hopf}
		\end{subfigure}
~
		\begin{subfigure}[t]{0.27\linewidth}
			\centering

			\includegraphics[width=\textwidth]{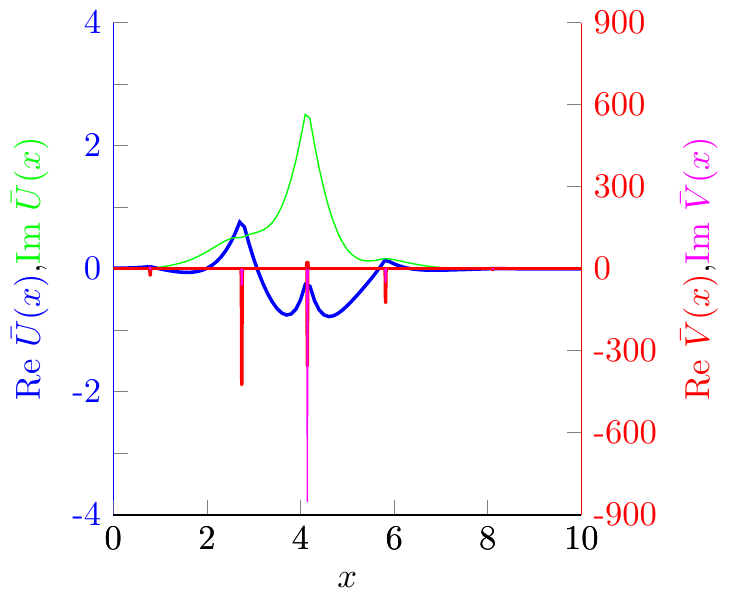}

			\caption{destabilising eigenfunction}\label{fig:STAB-SS-random-stab-Hopf}
		\end{subfigure}
~
		\begin{subfigure}[t]{0.4\textwidth}
			\centering
			\includegraphics[width=\linewidth]{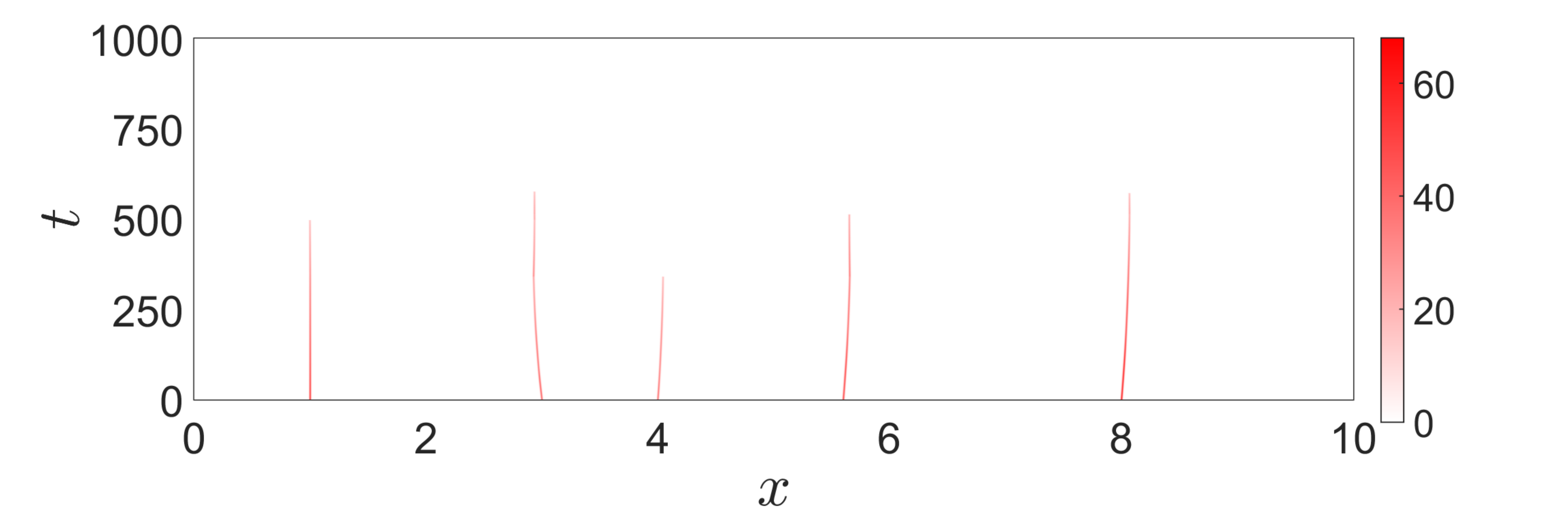}
			\caption{simulation of the full PDE}\label{fig:STAB-SS-random-patternplot-Hopf}
		\end{subfigure}
	\caption{Sideview (a) of a $5$-pulse configuration near a bifurcation and the destabilising eigenfunction (b) for this bifurcation (on a bounded domain with Neumann boundary conditions) and a simulation of the full PDE (c) with $m = 10$, $H = 0$, $D = 0.01$, $L = 10$ and $a_{bif} = 2.96$. The quasi-steady eigenvalue here is $\hat{\lambda} \approx 0.018 \pm 0.472 i$, suggesting a Hopf bifurcation (in agreement with our theory for $m > m_c(H)$, see~\eqref{eq:stab-mcritical} and the surrounding text). Here we again see that the most unstable pulse is the pulse with the lowest $V$-peak, as is also shown in the eigenfunction plot where this pulse has the highest peak. The simulation of the full PDE shows that the third pulse is indeed the first one to become unstable, as was predicted by the linear stability analysis. In the PDE simulation we let the rainfall parameter $a$ decrease starting from $a = 5$.}
	\label{fig:STAB-SS-random-Hopf}
\end{figure}

In numerous PDE simulations we have only ever seen the pulses disappear that have the highest $u_{0j}$-values (i.e. lowest $V$). We have tried to find situations for which this reasoning does not hold, but were unable to find those. Interestingly enough this rule of thumb is good, even when the destabilising eigenfunction does not have an easily recognisable biggest peak. In Figure~\ref{fig:NUM-IRR-whichDies} we encounter such a case. Here one could think from the eigenfunction that pulse 3 should annihilate. However pulse 2 -- the one with the lowest peak in $V$ -- is the one to disappear (and pulse 4 quickly follows).

\begin{figure}[t!]
		\centering
		\begin{subfigure}[t]{0.27 \linewidth}
			\centering

			\includegraphics[width=\textwidth]{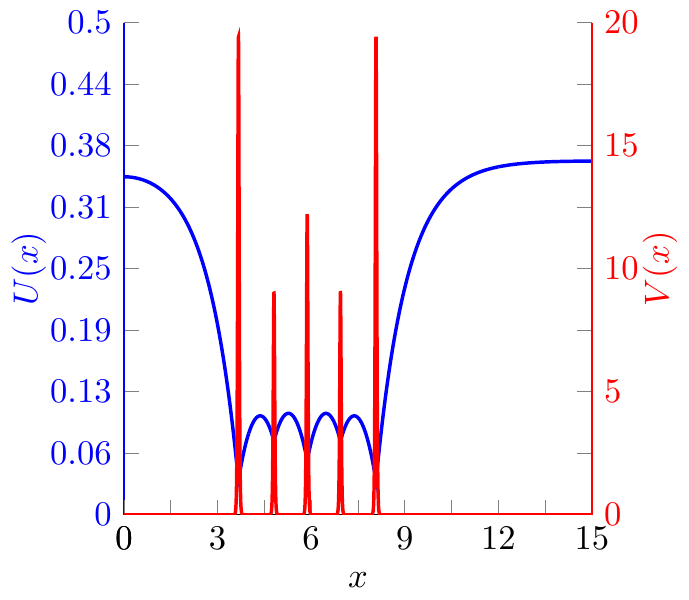}

			\caption{sideview}
		\end{subfigure}
~
		\begin{subfigure}[t]{0.27\linewidth}
			\centering

			\includegraphics[width=\textwidth]{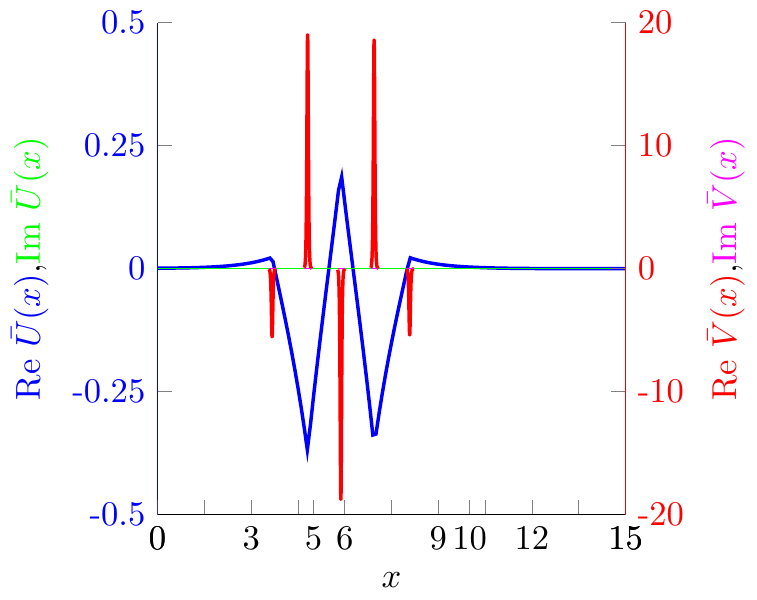}

			\caption{destabilising eigenfunction}
		\end{subfigure}
~
		\begin{subfigure}[t]{0.4\textwidth}
			\centering
			\includegraphics[width=\linewidth]{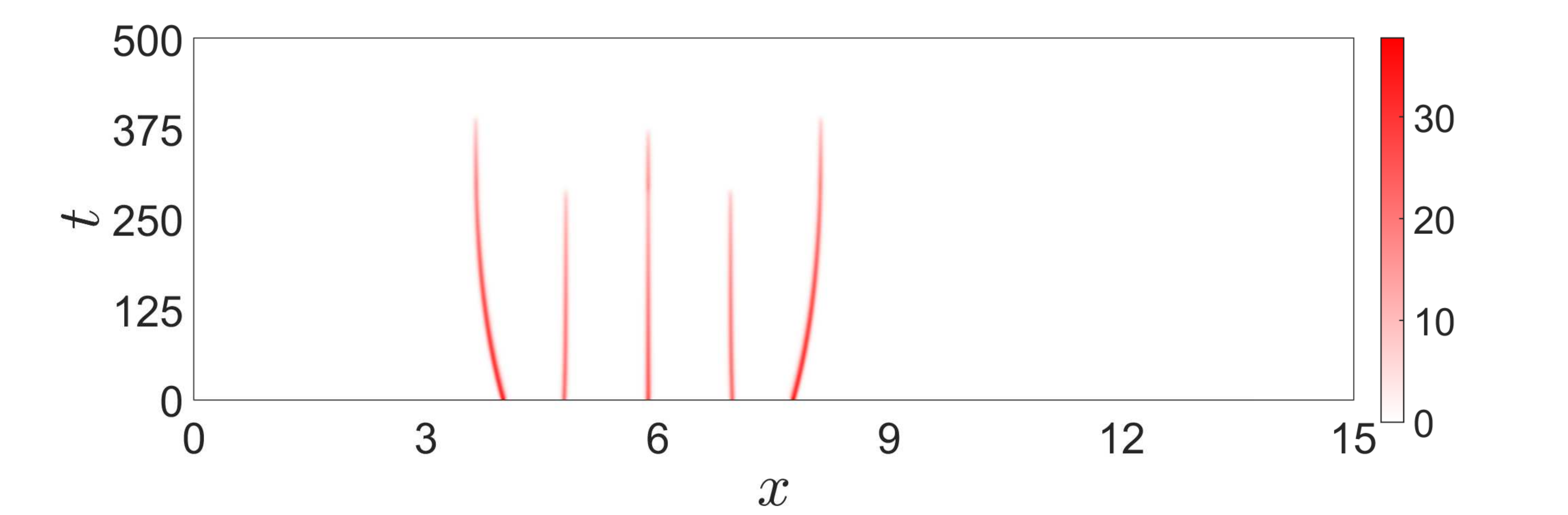}
			\caption{simulation of the full PDE}\label{fig:NUM-IRR-wd-patternplot}
		\end{subfigure}

	\caption{Sideview (a) of a specifically constructed $5$-pulse configuration near a bifurcation and the corresponding destabilising eigenfunction (b) for this bifurcation (on a bounded domain with Neumann boundary conditions) and a simulation of the full PDE (c) with $m = 0.45$, $h(x) \equiv 0$, $D = 0.01$, $L = 15$ and $a_{bif} = 0.36$. The quasi-steady eigenvalue is $\hat{\lambda} \approx 0$, suggesting a saddle-nodee bifurcation. Here we can see that the most unstable pulse is still the pulse with the lowest $V$-peak (pulse 2), although this cannot be seen easily from the eigenfunction here. In the PDE simulation we let the rainfall parameter $a$ decrease starting from $a = 0.75$. Note that another bifurcation, in which pulse 4 dies follows quickly after the disappearing of pulse 2 in the PDE simulation.}
	\label{fig:NUM-IRR-whichDies}
\end{figure}

This all give rise to the following conjecture on the stability of (irregular) $N$-pulse configurations.
\newtheorem{conj}{Conjecture/Observation}
\begin{conj}[Generalised Ni]
	When a multi-pulse pattern is sufficiently irregular, the localised $V$-pulse with the lowest maximum (highest $u_{0j}$-value) is the most unstable pulse, and thus the one to disappear first.
\end{conj}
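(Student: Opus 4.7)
The plan is to justify the conjecture by combining the linear stability machinery of Section~\ref{sec:linstabsections} with a perturbation argument anchored at a ``fully decoupled'' limit. Recall from~\eqref{eq:v0j} that the amplitude of the $j$-th pulse equals $\frac{3}{2u_{0j}}$, so the pulse with the lowest $V$-maximum is exactly the pulse with the largest $u_{0j}$. The target, then, is to show that this pulse is the first to carry the destabilising quasi-steady eigenvalue across the imaginary axis and that the associated critical eigenfunction concentrates at its location.

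First, I would dispose of the decoupled regime $m \gg 1 + H^2/4$, where the argument is essentially already contained in Section~\ref{sec:DSP}. The eigenvalue condition~\eqref{eq:LinStabWeakN} factors into $N$ independent conditions of the form $K_j := \frac{m^2 D}{a^2} u_{0j}^2 = K^{\ast}(m,H)$, each admitting an eigenfunction with a single non-zero $\rho_n$. Because $K^{\ast}(m,H)$ is a shared threshold and $K_j$ is monotone in $u_{0j}$, the largest $u_{0j}$ triggers the instability first, and the corresponding eigenfunction has by construction its $\bar{V}$-peak at that pulse -- in agreement with the conjecture.

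For the coupled regime $m \le \mathcal{O}(1 + H^2/4)$, I would argue perturbatively from the ``most irregular'' limit in which one $u_{0,j_0}$ is much larger than the rest. Using the eigenvalue condition~\eqref{eq:NPulseConditionSN} together with the requirement that $\hat{C}^{\ast} = u_{0j}^2 \Delta_{\text{out}}\bar{U}_x(P_j)/\rho_j$ holds simultaneously at every pulse, the $N$ algebraic constraints can be cast as a homogeneous linear system in the $(S_{1j}, S_{2j})$ whose determinant must vanish. In the distinguished limit $u_{0,j_0} \to \infty$ with the other $u_{0j}$ bounded, the $j_0$-th row dominates the system and the null vector localises at $P_{j_0}$; the associated value of $\mathcal{R}(\hat{\lambda}) - 3$ is then the first to cross the stability threshold. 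A continuity argument in the ratios $u_{0,j_0}/u_{0j}$ -- controlled by the monotonicity properties of $\Delta_{\text{out}}\bar{U}_x$ established in Section~\ref{sec:Pulse-Location-ODE-(A3)} -- would then extend the localisation statement to any configuration in which $u_{0,j_0}$ is sufficiently separated from the remaining $u_{0j}$, providing a concrete meaning for ``sufficiently irregular''.

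The main obstacle is twofold. Analytically, the perturbation argument in the coupled regime becomes delicate precisely near the bifurcation, since the relevant Jacobian degenerates there; tracking the null space through this degeneracy while maintaining localisation at $P_{j_0}$ requires a careful Lyapunov--Schmidt-type reduction rather than a naive expansion. More fundamentally, the conjecture is about which pulse \emph{disappears}, not merely about which eigenvalue crosses the imaginary axis, and the passage from linear instability to nonlinear annihilation lies outside the scope of the asymptotic analysis developed here: it demands that the quasi-steady bifurcation be subcritical (as for the homoclinic pulse analysed in~\cite{veerman2015breathing}) and that the fast nonlinear dynamics essentially respect the support of the critical eigenfunction. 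At present this last step is supported only by the numerical evidence of Section~\ref{sec:NumericalSimulations}, which is the reason the statement is formulated as a conjecture rather than a theorem.
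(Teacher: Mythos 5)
The paper offers no proof of this statement -- it is explicitly formulated as a Conjecture/Observation, supported only by the linear analysis of section~\ref{sec:linstabsections} and the simulations of section~\ref{sss:irregular} -- so the relevant comparison is between your supporting argument and the paper's. In the decoupled regime $m \gg 1 + H^2/4$ your reasoning coincides with the paper's own (section~\ref{sec:DSP}): condition~\eqref{eq:LinStabWeakN} yields a per-pulse quantity $K_j = \frac{m^2 D}{a^2} u_{0j}^2$ measured against the common threshold $K^*(m,H)$, so the pulse with the largest $u_{0j}$ destabilises first and the critical eigenfunction is localised there by construction. For the coupled regime the paper attempts no analytical localisation result at all: it simply reports that the numerically computed (CSP) eigenfunctions always carry their dominant $\bar{V}$-peak at the pulse with the largest $u_{0j}$, and it even exhibits a case (Figure~\ref{fig:NUM-IRR-whichDies}) in which the eigenfunction has no clearly dominant peak yet the lowest $V$-pulse still disappears -- a warning that eigenfunction localisation alone is not the whole story. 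Your proposed perturbation from the distinguished limit $u_{0,j_0}\to\infty$, continued by a Lyapunov--Schmidt-type argument to give ``sufficiently irregular'' a quantitative meaning, is therefore a genuine addition beyond the paper; it is a plausible programme, but it remains a sketch, and the Jacobian degeneracy you flag at the bifurcation is precisely where it would have to be carried out. Finally, you correctly identify the irreducible gap that the paper also leaves open: the conjecture concerns which pulse \emph{disappears}, a nonlinear statement that no quasi-steady linear analysis can deliver, and which the paper supports only by PDE simulations together with the assumed subcriticality of the bifurcation (cf.\ \cite{veerman2015breathing}). In short, your proposal matches the paper's level of rigour, reproduces its decoupled-regime argument, and adds an unexecuted but sensible plan for the coupled regime.
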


This conjecture can be seen as a generalisation of Ni's conjecture~\cite{Ni-conjecture}. The value of $u_{0j}$ is determined through the distance between pulses. When pulses are far apart the value of $u_{0j}$ decreases. Consequently the homoclinic pulse, the solitary $V$-pulse, is furthest away from any other pulses and has the lowest $u_{0j}$-value. It should therefore be the most stable configuration, as stated by Ni~\cite{Ni-conjecture, Hopf-Dances,Ddestab}.

This conjecture also helps in the search for the most stable $N$-pulse configuration. Judging from our conjecture, the quasi-steady stability (in the PDE sense) of a $N$-pulse configuration is determined by the maximum of all $u_{0j}$-values, i.e. by $\max_{j\in\{1,\ldots,N\}} u_{0j}$. Therefore the most stable $N$-pulse configuration is the configuration in which all pulses have the same value for $u_{0j}$. Put differently, as long as the manifold $\mathcal{M}_N$ exist, it contains the regularly spaced configuration -- which only becomes unstable under the PDE flow the moment that $\mathcal{M}_N$ is no longer a hyperbolic invariant manifold.

\subsubsection{Regular patterns -- regularly spaced pulses}\label{NUM-flatTerrain-REG}

Understanding the stability and bifurcations of these regular patterns (see Figure~\ref{fig:STAB-SS-reg-ex}) is more difficult. In these configurations all $V$-pulses have the same height and also the values for $u_{0j}$ are equal. Therefore we can no longer speak of the most unstable pulse. We have determined the eigenfunctions and found two different cases depending on the value of $m$. A precise distinction between these two cases -- similar to the critical value $m_c(H)$ in the homoclinic pulse stability study in section~\ref{sec:stab-1-pulse} -- could not be found; it seems this critical value of $m$ might even depend on the number and precise location of all pulses. However, in the asymptotic cases $m \ll 1 + H^2/4$ and $m \gg 1 + H^2/4$, the parameter $m$ definitely is `small' respectively `large'.

\subsubsection*{$m$ small}

When $m$ is small, we only found critical eigenfunctions with alternating one pulse upwards and one pulse downward\footnote{Or a configuration that is closest to this: for instance with an odd number of pulses and periodic boundary conditions there necessarily are two pulses pointing in the same direction next to each other.}, like the example depicted in Figure~\ref{fig:STAB-SS-reg-stab}. This type of eigenfunctions suggests that adjacent pulses evolve differently when the configuration becomes unstable: one of the pulses grows and the other shrinks. PDE simulations back this idea in general. However it is not clear at all from the eigenfunction \emph{which} pulses disappear: the odd ones or the even ones. PDE simulations indicate that both possibilities can happen; it seems to be very sensitive to the initial conditions.

Moreover, it can happen that a (naive) PDE simulation does not follow the critical destabilising eigenfunction but the next most unstable one, see Figure~\ref{fig:STAB-SS-reg-patternplot-symmetry}. This has to do with the symmetry breaking that is necessary to follow the most unstable eigenfunction. Since the PDE (simulation) wants to preserve its symmetry, it only follows eigenfunctions that satisfy the same symmetry -- though that eigenfunction still does resemble a period doubling as much as possible. This issue is easily solved when we apply a non-symmetric perturbation to the initial condition of the PDE.

We also observed that the eigenvalues, corresponding to these destabilizing eigenfunctions, always have $\lambda \approx 0$ (i.e. no imaginary part). This would suggest a saddle-node bifurcation. It was proven in~\cite{Lottes} that there are two periodic $N$-pulse solutions in the Gray-Scott system. One of these is stable and the other unstable, which underpins the possibility of a saddle-node bifurcation~\cite{Lottes}. Moreover, a recent study in a similar model indicates that such kind of saddle-node bifurcations generally are preceded by a period-doubling bifurcation or a sideband bifurcation~\cite{BjornEigenvalues}. Our numerical observations are thus in agreement with these recent discoveries.

\begin{figure}[t!]
\centering
		\begin{subfigure}[t]{0.27 \linewidth}
			\centering

			\includegraphics[width=\textwidth]{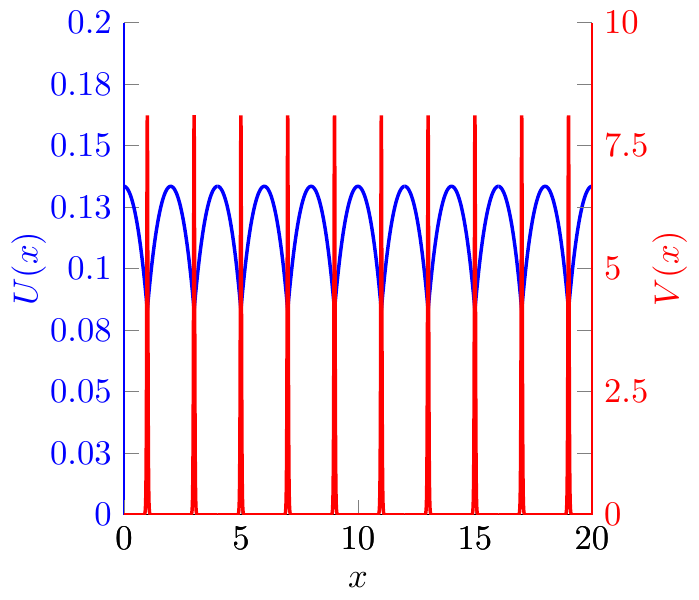}

			\caption{sideview}\label{fig:STAB-SS-reg-ex}
		\end{subfigure}
~
		\begin{subfigure}[t]{0.27 \linewidth}
			\centering

			\includegraphics[width=\textwidth]{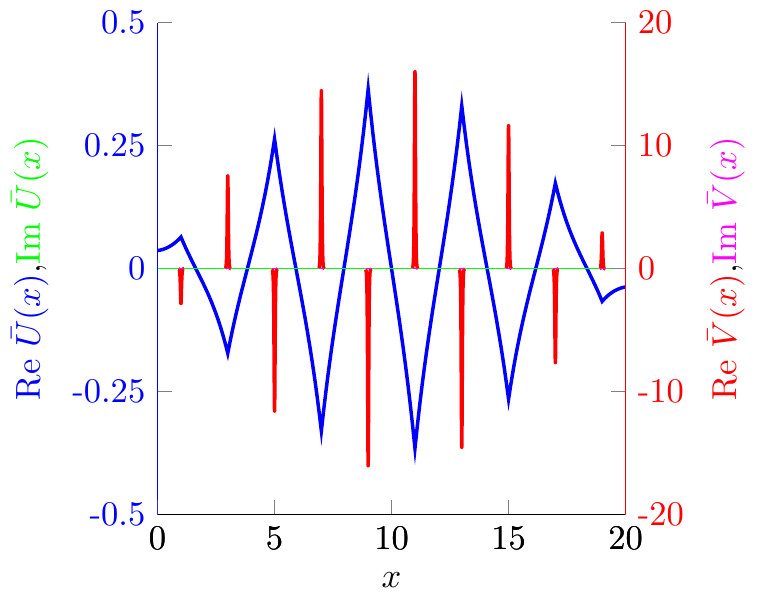}

			\caption{eigenfunction}\label{fig:STAB-SS-reg-stab}
		\end{subfigure}
\\
		\begin{subfigure}[t]{0.4\textwidth}
			\centering
			\includegraphics[width=\linewidth]{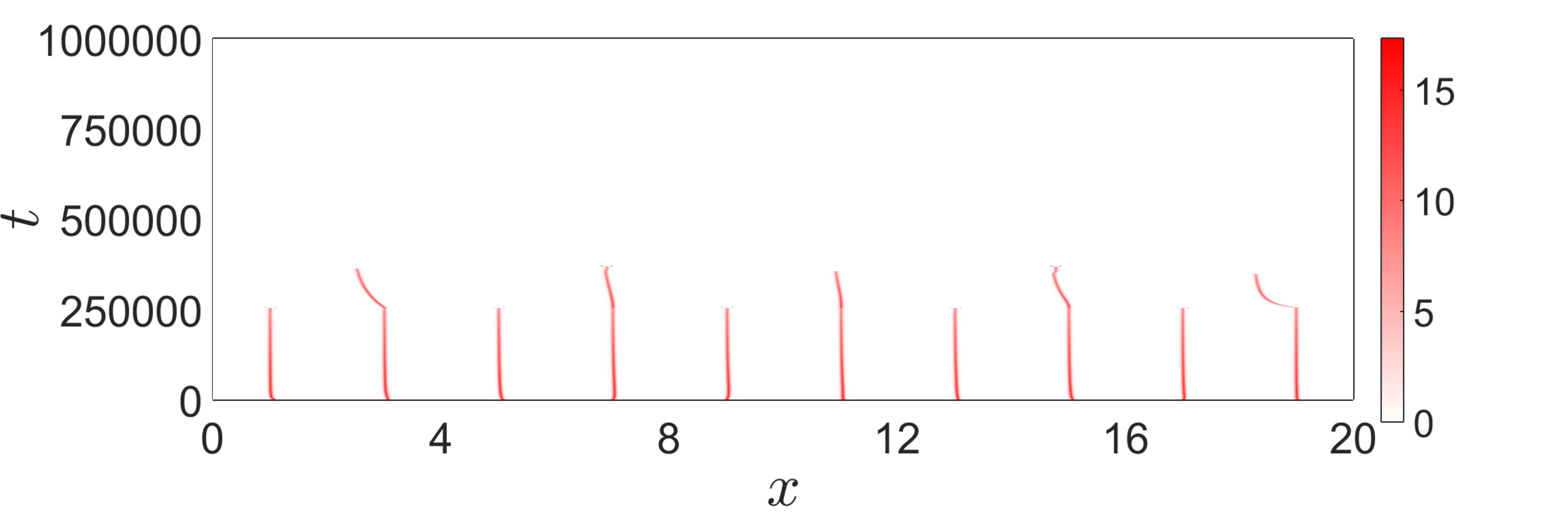}
			\caption{simulation of the full PDE}\label{fig:STAB-SS-reg-patternplot-symmetry}
		\end{subfigure}
~
		\begin{subfigure}[t]{0.4\textwidth}
			\centering
			\includegraphics[width=\linewidth]{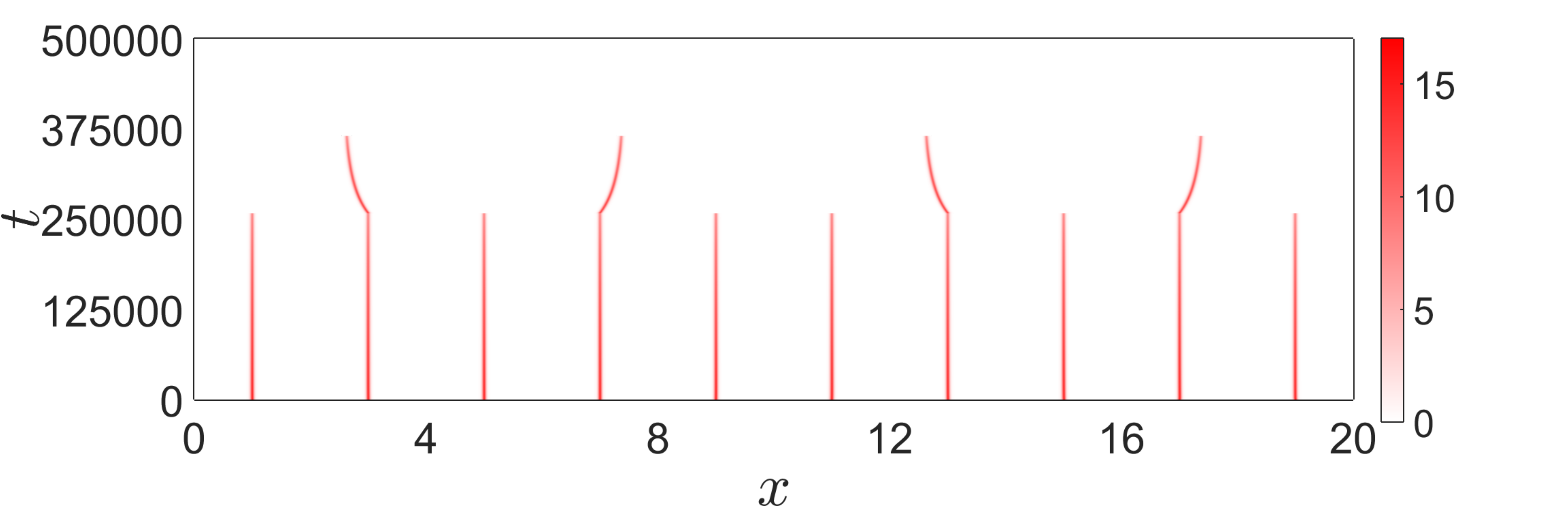}
			\caption{simulation of the full PDE with symmetry issues}\label{fig:STAB-SS-reg-patternplot-symmetry}
		\end{subfigure}

	\caption{Sideview (a) of a $10$-pulse configuration near a bifurcation and the destabilising eigenfunction (b) for this bifurcation (on a bounded domain with Neumann boundary conditions) for parameters $m = 0.45$, $H = 0$, $L = 20$, $D = 0.01$ and $a_{bif}= 0.226$. The eigenvalue here is $\hat{\lambda} \approx 0$, suggesting a saddle-node bifurcation. The PDE simulations (c-d) ran with $a$ decreasing from $a = 0.3$ to $a = 0$, but the initial condition was chosen to be perfectly symmetric in (d). The PDE simulation in (d) does not follow the destabilising eigenfunction from (b), because the solution wants to maintain its symmetry. Therefore the regular $10$-pulse configuration persists for longer time (lower $a$) as well.}
	\label{fig:STAB-SS-reg}
\end{figure}

This gives rise to another conjecture
\begin{conj}[Regular Patterns I]
	When vegetation $V$-pulses form a regular pattern and $m$ is sufficiently small, destabilisation happens via a period doubling bifurcation and the critical eigenvalue crosses $\hat{\lambda} = 0$.
\end{conj}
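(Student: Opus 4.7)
The plan is to exploit the discrete translation symmetry of a regular $N$-pulse pattern by a Bloch--Floquet decomposition of the coupled stability problem of section~\ref{sec:CSP}, reducing the $N$-pulse eigenvalue problem to a one-parameter family of scalar conditions indexed by a Floquet multiplier $\gamma$, and then combine this with the skeleton analysis of $\mathcal{R}(\hat{\lambda})$ from section~\ref{sec:RlambdaProperties}. Concretely, for a regular pattern one has $u_{0j} \equiv u_0$ and $\Delta P_j \equiv \Delta P = L/N$, so the jump/continuity relations that determine $(S_{1,j+1},S_{2,j+1})$ from $(S_{1j},S_{2j})$ (via the matching at $P_{j+1}$ and the inner-region jump $\Delta_{in}\bar{U}_x(P_{j+1}) = \frac{a^2}{m\sqrt{m}D}\frac{\rho_{j+1}}{u_0^2}\hat{C}(\hat{\lambda})$) become a single $j$-independent $2\times 2$ transfer matrix $T = T(\Delta P, H, \hat{\lambda})$. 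Admissible eigenfunctions are then eigenvectors of $T$ with multiplier $\gamma$, i.e.\ $(S_{1,j+1},S_{2,j+1})^T = \gamma\, (S_{1j},S_{2j})^T$, where the spectrum of admissible $\gamma$'s is determined by the boundary conditions: $\gamma^N=1$ for periodic boundaries, and $\gamma = \pm 1$ as the two dominant modes (modulo boundary corrections) for Neumann boundaries.

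With this ansatz, I would substitute into the outer representation~\eqref{eq:STAB-outerregions} and compute the left-hand side $\hat{C}^\ast = \frac{u_0^2}{\rho_j}\Delta_{out}\bar{U}_x(P_j)$ of equation~\eqref{eq:NPulseConditionSN} in closed form as a function $\hat{C}^\ast(\gamma;\Delta P, H)$. The eigenvalue condition then becomes $-\tfrac{m\sqrt{m}D}{a^2}\tfrac{\hat{C}^\ast(\gamma)}{2} = \mathcal{R}(\hat{\lambda}) - 3$. For each fixed $\gamma$ the analysis of section~\ref{sec:RlambdaProperties} applies verbatim: since $m \ll 1 + H^2/4$ we are in the regime of Figure~\ref{fig:NLEPconditions-SN}/Figure~\ref{fig:skeletonm045}, and the properties $\mathcal{R}(0)=6$, $\mathcal{R}'(0) = 9/2 > 0$ established in equations~\eqref{eq:Rlambda0}--\eqref{eq:Rlambdader0} show that the right-hand side attains its minimum on the real axis at $\hat{\lambda} = 0$ (with value $3$) before the branch closes on itself and a complex-conjugate pair detaches. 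Consequently, as the bifurcation parameter (e.g.\ $a$) varies, the first destabilisation for \emph{any} Floquet mode occurs at a real eigenvalue crossing zero, i.e.\ via a saddle-node --- yielding the second half of the conjecture.

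The remaining, and main, content is to identify \emph{which} Floquet mode $\gamma$ destabilises first, which amounts to maximising $|\hat{C}^\ast(\gamma)|$ (with the appropriate sign, dictated by the side of the skeleton on which we live) over the admissible $\gamma$. The $\gamma = 1$ mode corresponds to a uniform shift of all pulses and is the neutral direction tangent to $\mathcal{M}_N$ (cf.\ the family of spatially periodic fixed points found in section~\ref{sec:ODEFixedPoints}), so it cannot be the critical mode; the conjecture predicts the opposite extreme $\gamma = -1$ (the period-doubling mode: $\rho_{j+1} = -\rho_j$). The strategy would be to show that $\hat{C}^\ast(e^{i\theta})$ depends monotonically on $\cos\theta \in [-1,1]$ through the hyperbolic trigonometric factors $\cosh(\tfrac{1}{2}\sqrt{H^2+4}\,\Delta P)$ and $\sinh(\tfrac{1}{2}\sqrt{H^2+4}\,\Delta P)$ appearing in the transfer matrix (these factors being positive and bounded away from $1$ for any $\Delta P > 0$), so that $\gamma = -1$ gives the extremal value.

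The hard part will be this last monotonicity step: while the algebra of computing $\hat{C}^\ast(\gamma)$ is elementary, establishing unambiguously that $\gamma = -1$ maximises the destabilising value for every admissible combination of $H, \Delta P$ and $N$ requires care with signs and with the boundary contributions at the first and last pulse when the domain is bounded with Neumann conditions (cf.\ the auxiliary points $P_0$, $P_{N+1}$ in section~\ref{sec:Pulse-Location-ODE-(A3)}). Modulo this monotonicity, the conjecture follows; and the prediction of a period-doubling eigenfunction $\rho_j \propto (-1)^j$ agrees with the alternating up/down structure observed numerically in Figure~\ref{fig:STAB-SS-reg-stab} and with the sideband/period-doubling mechanism of~\cite{BjornEigenvalues, Hopf-Dances} referenced in the text.
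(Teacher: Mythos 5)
First, be aware that the paper does not actually prove this statement: it is explicitly labelled a Conjecture/Observation, and the only support offered in section~\ref{NUM-flatTerrain-REG} is numerical --- critical eigenfunctions and eigenvalues computed with the coupled (CSP) scheme of section~\ref{sec:CSP} are always found to have alternating sign with $\hat{\lambda}\approx 0$ --- together with citations to the saddle-node structure of periodic Gray--Scott patterns and to the period-doubling/sideband results of~\cite{BjornEigenvalues,Hopf-Dances}. Your proposal is therefore more ambitious than the paper: a Bloch--Floquet reduction of the regular $N$-pulse problem to a single transfer matrix, combined with the skeleton analysis of $\mathcal{R}(\hat{\lambda})$, is the natural analytic route and is consistent with the Floquet framework the paper itself invokes for the decoupled case. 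The part of your argument addressing the second half of the claim (destabilisation through a real eigenvalue at $\hat{\lambda}=0$) is essentially sound: in the regime $m\ll 1+H^2/4$ condition~\eqref{eq:NPulseConditionSN} has the real right-hand side $\mathcal{R}(\hat{\lambda})-3$ and the skeleton of Figure~\ref{fig:skeletonm045} forces the crossing to occur on the real axis --- although your assertion that the real branch attains its minimum at $\hat{\lambda}=0$ is imprecise (for $m<m_c(H)$ the complex pair lands strictly to the \emph{left} of the imaginary axis; what matters is only that the eventual crossing is real).

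The genuine gap is in the first half, and it is twofold. (i) You discard the $\gamma=1$ Floquet mode as ``the neutral direction tangent to $\mathcal{M}_N$''. That is not correct: the translation modes tangent to $\mathcal{M}_N$ are the \emph{small} eigenvalues, which are already excluded by the scaling $\lambda=m\hat{\lambda}$ of the large-eigenvalue problem; the $\gamma=1$ branch of that problem is the in-phase amplitude mode, i.e.\ precisely the full-collapse eigenfunction of Figure~\ref{fig:intro-FullCollapse}, and it is a genuine competitor --- indeed the companion conjecture for large $m$ (Regular Patterns II) records that full collapse can and does win there. A proof of the present conjecture must therefore show why $\gamma=-1$ beats $\gamma=1$ specifically when $m$ is small; your argument removes the competitor by an incorrect symmetry identification rather than by an estimate. (ii) The monotonicity of $\hat{C}^{*}(e^{i\theta})$ in $\cos\theta$, which you yourself flag as the hard step, is exactly the content of the conjecture and is left unproven; until it is established (including the Neumann boundary corrections at the auxiliary points $P_0$ and $P_{N+1}$), the proposal reproduces the paper's numerical observation in a cleaner language but does not upgrade it to a proof.
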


\subsubsection*{$m$ large}

When $m$ is large, we have encountered two sorts of eigenfunctions in our numerical simulation. Both are Hopf bifurcations; one of them suggests a period doubling bifurcation and the second a full collapsing bifurcation (an example of the latter is shown in Figure~\ref{fig:STAB-SS-reg-stab-Hopf}). As stated in section~\ref{sec:DSP}, this backs the idea of Hopf dances near the tip of the Busse Balloon~\cite{Hopf-Dances, Ddestab}. However in this situation our linear predictions have only limited value. When the eigenfunction point to a period doubling, it is possible that the PDE simulation shows a full collapse and vice versa. The faster we decrease our bifurcation parameter, the more likely it is that this happens -- see Figure~\ref{fig:STAB-SS-reg-Hopf} for an example. Recalling the asymptotic analysis in section~\ref{sec:DSP}, this can be understood as follows: when $m \gg 1 + H^2/4$ the eigenvalues for the period doubling and the full collapse have the same value to leading order. They only differ in higher order. Hence, one cannot expect a priori that the linear stability analysis provides a good grip on the non-linear stability and thus the numerically observed behaviour cannot be too surprising.

\begin{figure}[t]
\centering
		\begin{subfigure}[t]{0.27 \linewidth}
			\centering

			\includegraphics[width=\textwidth]{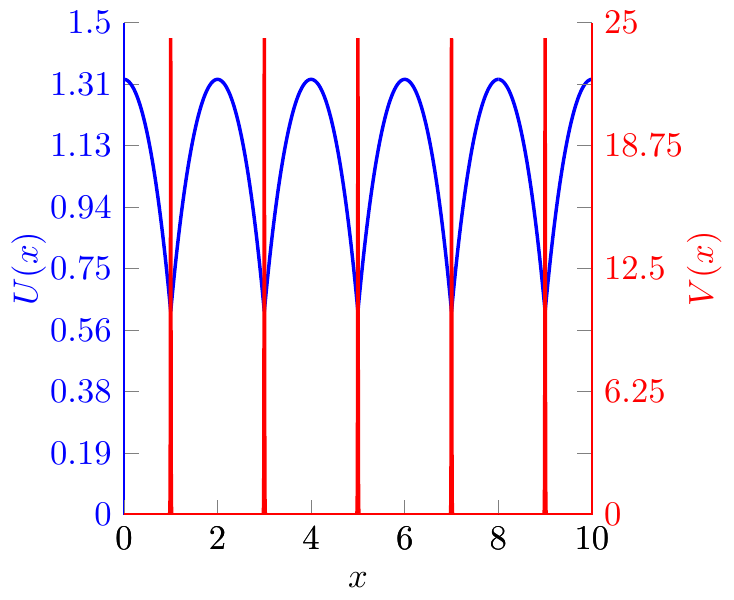}

			\caption{sideview}
		\end{subfigure}
~
		\begin{subfigure}[t]{0.27 \linewidth}
			\centering

			\includegraphics[width=\textwidth]{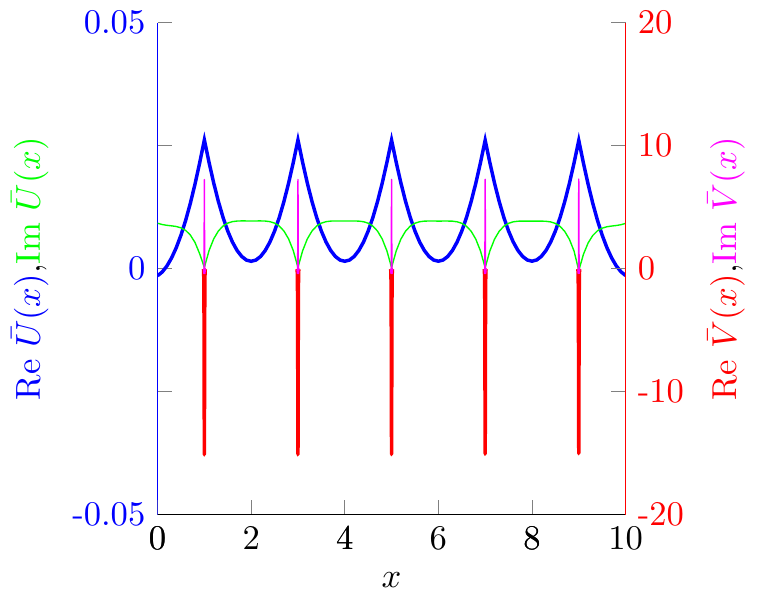}

			\caption{eigenfunction}\label{fig:STAB-SS-reg-stab-Hopf}
		\end{subfigure}
\\
		\begin{subfigure}[t]{0.4\textwidth}
			\centering
			\includegraphics[width=\linewidth]{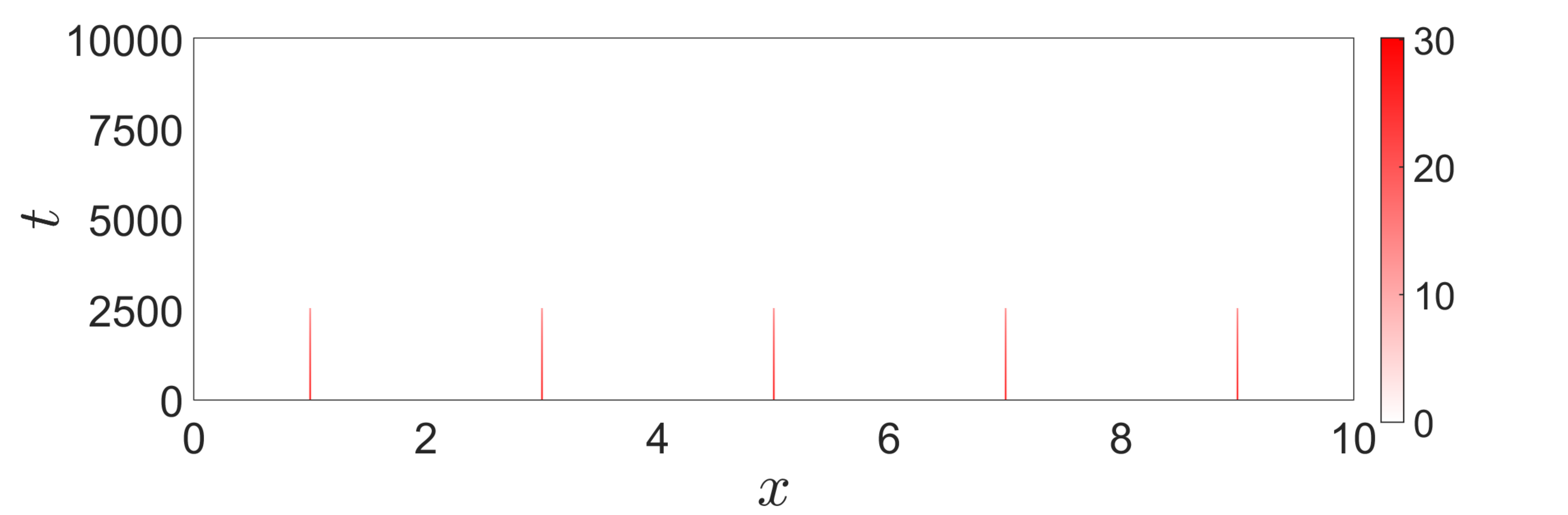}
			\caption{simulation of the full PDE}
		\end{subfigure}
~
		\begin{subfigure}[t]{0.4\textwidth}
			\centering
			\includegraphics[width=\linewidth]{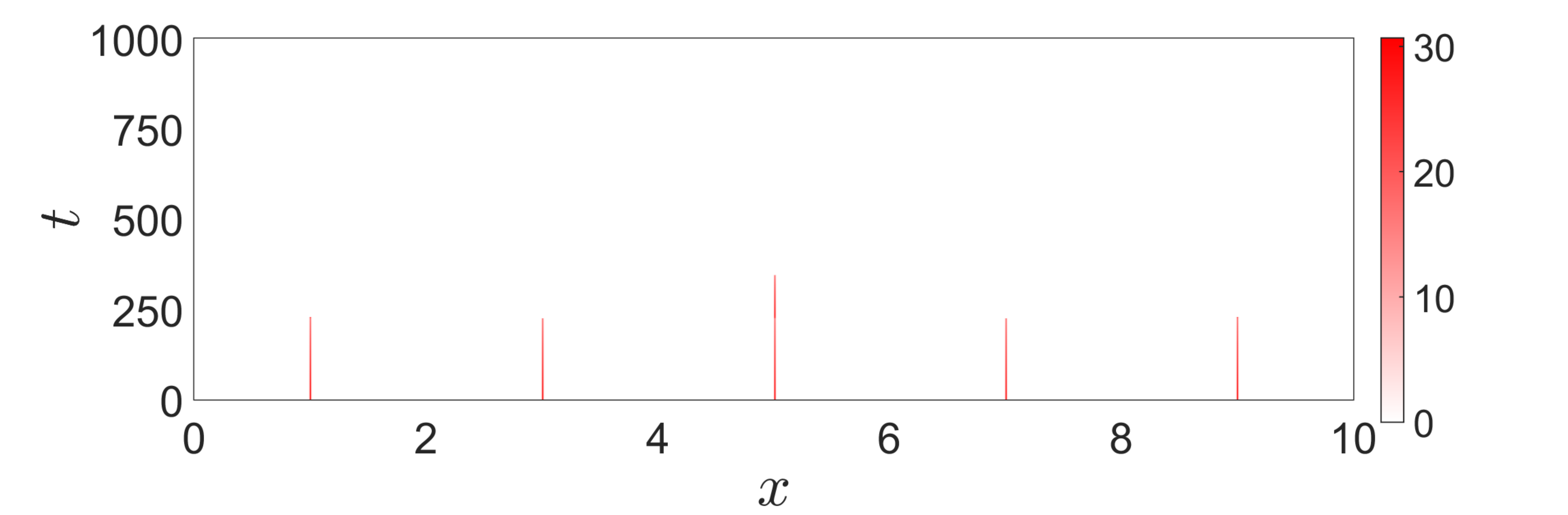}
			\caption{simulation of the full PDE with symmetry issues}
		\end{subfigure}

	\caption{Sideview (a) of a $5$-pulse configuration near a full collapse (Hopf) bifurcation and the destabilising eigenfunction (b) for parameter values $m = 10$, $h(x) \equiv 0$, $L = 10$, $D = 0.01$ and $a_{bif} = 2.622$. The eigenvalue here is $\hat{\lambda} \approx \pm 0.48 i$ indicating a Hopf bifurcation. The PDE simulations ran with $a$ decreasing from $a = 3$ to $a = 0$. The PDE simulation in (d) does not follow the destabilising eigenfunction from (b), possibly because the decrease in the bifurcation parameter was too fast, or because the eigenvalues are close together in this situation (see main text).}
	\label{fig:STAB-SS-reg-Hopf}
\end{figure}

From this all we come to the following conjecture
\begin{conj}[Regular Patterns II]
	When vegetation $V$-pulses form a regular pattern and $m$ is sufficiently large, destabilisation happens either via a period doubling Hopf bifurcation or a full collapse Hopf bifurcation. In these cases the critical eigenvalue has a non-zero imaginary part.
\end{conj}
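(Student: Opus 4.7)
The plan is to leverage the analysis of Section~\ref{sec:DSP}, since $m$ sufficiently large places us in the regime $m \gg 1 + H^2/4$ where the decoupled stability problem is valid at leading order. For a regular $N$-pulse pattern on a bounded or periodic domain, the translational symmetry of the configuration forces $u_{0j} = u_0$ to take the same value at every pulse, so $K_j := m^2 D u_{0j}^2/a^2 = K$ for every $j$. The single-pulse eigenvalue condition~\eqref{eq:LinStabWeakN} then becomes identical at every site, and as $a$ decreases $K$ crosses the critical value $K^*(m,H)$ from~\eqref{eq:definitionKstar} simultaneously for all pulses. Because $m > m_c(H) = 3(1+H^2/4)$ for $m$ sufficiently large, the classification from Section~\ref{sec:stab-1-pulse} (see Figure~\ref{fig:skeletonm10}) guarantees that the critical eigenvalue $\hat{\lambda}^*$ lies on the skeleton at a point with strictly nonzero imaginary part. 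This already establishes the Hopf character of the destabilisation.

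What remains is to identify the spatial structure of the critical eigenfunction as either period-$1$ (full collapse) or period-$2$ (period doubling). At leading order the DSP eigenvalue is $N$-fold degenerate, so resolving which linear combination of single-pulse modes destabilises first requires going to next order, where pulse--pulse coupling enters. The natural tool is a Floquet--Bloch decomposition on the periodic extension of the pattern with parameter $\gamma \in S^1$: for each $\gamma$ the coupled stability problem of Section~\ref{sec:CSP} yields a perturbed condition whose root traces a curve $\hat{\lambda}(\gamma)$ emanating from the degenerate $\hat{\lambda}^*$, and one must maximise $\mathrm{Re}\,\hat{\lambda}(\gamma)$ over $\gamma$.

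Guided by~\cite{Hopf-Dances, Ddestab}, I expect the extremum to be attained at $\gamma = \pm 1$: the value $\gamma = +1$ corresponds to the in-phase mode in which all pulses oscillate together, i.e.\ the full collapse of Figure~\ref{fig:intro-FullCollapse}, while $\gamma = -1$ corresponds to adjacent pulses oscillating in anti-phase, producing the period doubling eigenfunction of Figure~\ref{fig:intro-PeriodDoubling}. To establish that these two Floquet angles are indeed the only candidates, I would expand the dispersion relation to the first order at which the (exponentially small) pulse--pulse coupling manifests itself, and verify that the correction to $\hat{\lambda}^*$ takes the form $\alpha(\hat{\lambda}^*)\cos\theta$ with $\gamma = e^{i\theta}$. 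Since $\cos\theta$ has its extrema exactly at $\theta \in \{0,\pi\}$, and the sign of the next-order coefficient $\alpha$ determines which of the two is the most unstable, this produces precisely the two-mode dichotomy of the conjecture.

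The main obstacle is the computation of this next-order correction. The coupling enters through exponentially small tails of both the outer $\bar U$-component and the critical quasi-steady eigenfunction, and matching these consistently against the inner NLEP problem is delicate for complex $\hat\lambda^*$ (as opposed to the simpler real case of Conjecture~2). A secondary, more conceptual difficulty is that the $\gamma = +1$ and $\gamma = -1$ branches are asymptotically close in the $m \gg 1 + H^2/4$ limit -- this is the essence of the Hopf dance of~\cite{Hopf-Dances} -- so while the linear theory can pin down that the critical mode is one of these two, deciding \emph{which} of the two wins in a given PDE simulation depends on higher-order corrections and on nonlinear effects beyond the reach of the present quasi-steady framework, consistent with the sensitive behaviour observed in Figure~\ref{fig:STAB-SS-reg-Hopf}.
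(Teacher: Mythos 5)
Your proposal follows essentially the same route as the paper: the statement is presented there as a conjecture/observation supported by (i) the decoupled stability analysis of section~\ref{sec:DSP}, which for a regular pattern gives equal $K_j$ at every pulse and, since $m > m_c(H)$, a critical eigenvalue with nonzero imaginary part, and (ii) an appeal to the Floquet/``Hopf dance'' results of~\cite{Hopf-Dances,Ddestab} to reduce the degenerate mode to the $\gamma = \pm 1$ (full collapse vs.\ period doubling) dichotomy, exactly as you do. The paper likewise does not carry out the next-order coupling computation, and your closing remark that the two branches are asymptotically close and the winner is decided beyond leading order matches the paper's own explanation of why its linear predictions have only limited value in this regime.
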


%\FloatBarrier

\subsection{The effect of sloped terrains}\label{sec:NUM-slopedTerrain}

When the terrain is no longer flat, new phenomena occur. To illustrate this, we first consider a constantly sloped terrain, i.e. $h(x) = H x$ on a domain with periodic boundary conditions (section~\ref{sec:NUM-slopePeriodic}) and on a bounded domain with Neumann boundary conditions (section~\ref{sec:NUM-slopeBounded}). Subsequently, we present preliminary results on a terrain that does have a slope that varies in $x$ in section~\ref{sec:NUM-varSlope}.

\subsubsection{Periodic domains}\label{sec:NUM-slopePeriodic}
	
Our study of $N$-pulse solutions on domains with periodic boundary conditions, indicates that these solutions always converge to a configuration in which all pulses are equidistant; i.e. to a regular pattern. This is in agreement with our proofs for the situation $\frac{m\sqrt{m}D}{a^2} \ll 1$ in section~\ref{sec:ODEFixedPoints} and forms a natural extension of our findings in the flat terrain setting of section~\ref{sec:NUM-flatTerrain}. Moreover, the story about eigenfunctions and eigenvalues is also similar to the flat terrain setting: bifurcations of irregular configurations favour single-pulse extinction, whereas regular configurations bifurcate with either a period doubling or a full desertification (depending on the magnitude of $m$ -- see section~\ref{NUM-flatTerrain-REG}). These regular patterns are again the most stable configuration possible for a $N$-pulse solution.

Not everything is the same though: pulses tend to move uphill and therefore solutions are never stationary. Since pulses also try to repel each other this not necessarily means that all pulses always migrate uphill -- this depends on the precise location of all pulses and the size of the slope $H$. However, for regular patterns -- the attracting configuration of the pulse location ODE~\eqref{eq:ODEc1} -- all pulses move uphill. Moreover, we can explicitly determine their migration speed.

\subsubsection*{Uphill migration speed of regular patterns}

We consider a regular pattern with $N$ pulses on a domain with size $L$. For these regular patterns, all pulses are equally far apart from each other. We define this seperation distance -- i.e. the wavelength of the pattern -- as $d := \Delta P_j = L / N$. Substitution of this separation distance in equation~\eqref{eq:ODEforgenH} -- that is derived under assumption (A3) -- gives the speed $\hat{c}_0$ of these regularly spaced pulse configurations as
\begin{equation}
\hat{c}_0(d) = \frac{D a^2}{m \sqrt{m}}\ \frac{\sqrt{H^2+4}}{6}\  \frac{ \cosh( H d / 2) - \cosh( \sqrt{H^2 + 4}\ d / 2) }{\sinh( \sqrt{H^2+4}\ d / 2) } \left( - H + \sqrt{H^2+4} \frac{\sinh(H d / 2)}{\sinh(\sqrt{H^2+4}\ d / 2)} \right).\label{eq:speedperiodic}
\end{equation}
Under the weaker assumption (A3'), the value $\overline{U}(P_j) = \frac{m \sqrt{m} D}{a^2} u_{0j}$ is not necessarily approximately $0$, though the value is the same for all pulses. So we define this value as $u_0 := u_{0j}$. Therefore we may use equation~\eqref{eq:ODEforgenH-deltaO1} to find the speed $\hat{c}$ in this situation as
\begin{equation}
	\hat{c}(d) = \left(1 - \frac{m \sqrt{m} D}{a^2} u_0(d) \right)^2 \hat{c}_0(d).
	\label{eq:speedperiodic-deltaO1}
\end{equation}
with $\hat{c}_0(d)$ as in~\eqref{eq:speedperiodic}.
Note that $u_0$ is not determined in this form and that its value depends on $d$ and $H$. To find this value we need to solve $\vec{F}(\vec{u}_0) = 0$, as explained in section~\ref{sec:pulseODE-SaddleNode}. Although the algebraic equation that needs to be solved is only quadratic in this case, we use a numerical approximation to find the value of $u_0$. In general one sees that the larger the value of $\frac{m \sqrt{m} D}{a^2} u_0$, the slower a (regular) pattern moves -- though it will always move uphill.

\begin{figure}[t]
	\centering
		\begin{subfigure}[t]{0.31 \textwidth}
				\centering

				\includegraphics{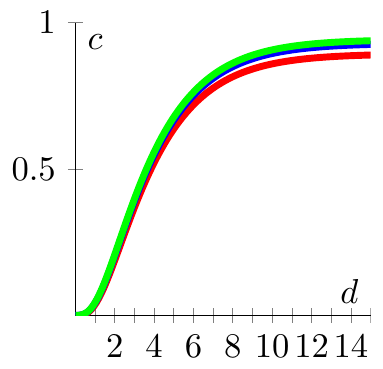}
			\caption{$H = 2$}
		\end{subfigure}
%~
%		\begin{subfigure}[t]{0.31 \textwidth}
%				\centering
%
%				\includegraphics{}
%
%			\caption{$H = 0$}
%		\end{subfigure}
%~
%		\begin{subfigure}[t]{0.475 \textwidth}
%				\centering
%
%				\tikzsetnextfilename{speedH-2}
%
%			\setlength\figureheight{5cm}
%			\setlength\figurewidth{3.3cm}
%				\input{Figs/speedH-2.tikz}
%			\caption{$H = -2$}
%		\end{subfigure}
~
		\begin{subfigure}[t]{0.31 \textwidth}
				\centering

				\includegraphics{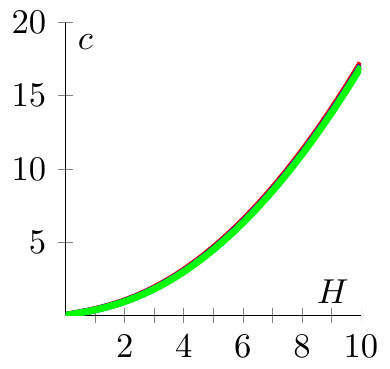}

			\caption{Speed of homoclinic pulse}
		\end{subfigure}
	\caption{Rescaled speed $c$ as a function of $d$ in (a), where $\hat{c}(d) = \frac{D a^2}{m \sqrt{m}} c(d)$. In (b) we show the (rescaled) speed of a homoclinic pulse as a function of the slope $H$. In these plots the green line is a plot of the corresponding parameter-independent equation that are valid under assumption (A3) (i.e. equation~\eqref{eq:speedperiodic} for a and equation~\eqref{eq:speedhomoclinic} for b). The red and blue lines show the evaluations under assumption (A3), when the equations become parameter-dependent (via $u_{0j}$), for $D = 0.01$, $m = 0.45$, $a = 0.5$ (red line) and for $D = 0.01$, $m = 10$, $a = 10$ (blue line).}
	\label{fig:periodic-speeds}
\end{figure}

In Figure~\ref{fig:periodic-speeds} we have plotted the movement speed $\hat{c}_0(d)$ and $\hat{c}(d)$ for several values of $H$. From this it is clear that the farther the pulses are apart, the faster they move. In the limit $d \rightarrow \infty$ we expect them to move at the speed at which a (solitary) homoclinic pulse would move. It follows from equation~\eqref{eq:EX-speed-homoclinic} that these homoclinic pulses move at the speed $\hat{c}_{h}$ given by
\begin{equation}
\hat{c}_h = \left( 1 - \frac{m \sqrt{m} D}{a^2} u_0 \right)^2 \frac{D a^2}{m \sqrt{m}} \frac{H \sqrt{H^2+4}}{6},\label{eq:speedhomoclinic}
\end{equation}
which indeed is also the limit of equation~\eqref{eq:speedperiodic-deltaO1} when we take $d \rightarrow \infty$. Note that $u_0$ is known in this case, see equation~\eqref{eq:EX-u0-homoclinic}. To find the homoclinic speed under assumption (A3) we can simply set $\frac{m \sqrt{m} D}{a^2} u_0 = 0$.

Note that for $d \downarrow 0$ the pulses get closer together. When these pulses get too close together, the linear stability theory of section~\ref{sec:linstabsections} indicates that the configuration is unstable under the PDE flow. Therefore there is a minimum wavelength $d_{min}$ corresponding to a pattern that is marginally stable. Only if $d \geq d_{min}$ we expect to see (stable) periodic patterns. Because the speed of a pattern is a monotonic function of its wavelength -- as directly follows from equation~\eqref{eq:speedperiodic}, see also Appendix~\ref{sec:app-FixedPointProofs} -- we also know that stable periodic configurations can only have speed that is between $\hat{c}(d_{min})$ and $\hat{c}_h$. This agrees with previous theoretical results on the speed of homoclinic pulse solutions~\cite[Equation (5.3)]{Lottes}.

\subsubsection{On bounded domains}\label{sec:NUM-slopeBounded}

Next, we consider $N$-pulse solutions with a constantly sloped terrain on a bounded domain with Neumann boundary conditions. Once again, the fixed point analysis of section~\ref{sec:ODEFixedPoints} -- that was valid under assumption (A3) -- is verified by PDE simulations. Moreover, the results again carry over to the situation in which (A3') holds (i.e. $U(P_j) \approx 0$ does not hold); both simulations of the pulse-location ODE~\eqref{eq:ODEforgenH-deltaO1} as direct PDE simulations always show that all $N$-pulse configuration that start on manifold $\mathcal{M}_N$ evolve to a specific configuration that depends on the parameters of the model (but not on the initial conditions). This specific configuration is the (stable) fixed point of the pulse-location ODE~\eqref{eq:ODEforgenH-deltaO1}. In Figure~\ref{fig:FP-sloped} we have plotted these fixed points as function of the slope $H$ and different number of pulses. These fixed points are obtained as the outcome of simulations of the pulse-interaction ODE~\eqref{eq:ODEforgenH-deltaO1}, with the method as explained in section~\ref{sec:pulseODE-SaddleNode}. From these plots we see an increase in the terrain's slope leads to fixed points that get closer to the boundary of the domain. Moreover, it shows that a simplification of assumption (A3') to (A3) generally leads to the same fixed points, unless the system is close to a saddle-node bifurcation.

\begin{figure}[t]
	\centering
		\begin{subfigure}[t]{0.24 \textwidth}

			\centering

				\includegraphics[width=\textwidth]{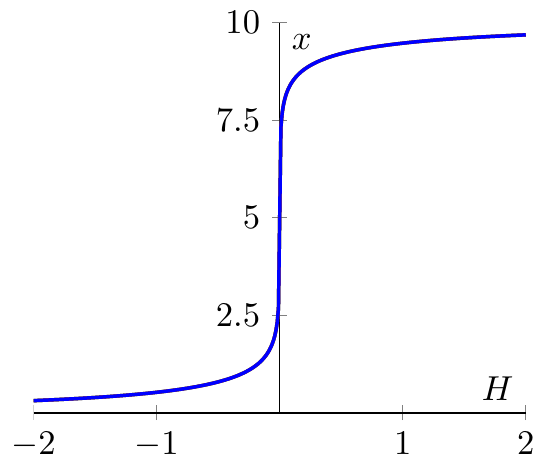}

			\caption{$N=1$, $L = 10$}
		\end{subfigure}
		\begin{subfigure}[t]{0.24 \textwidth}

			\centering

				\includegraphics[width=\textwidth]{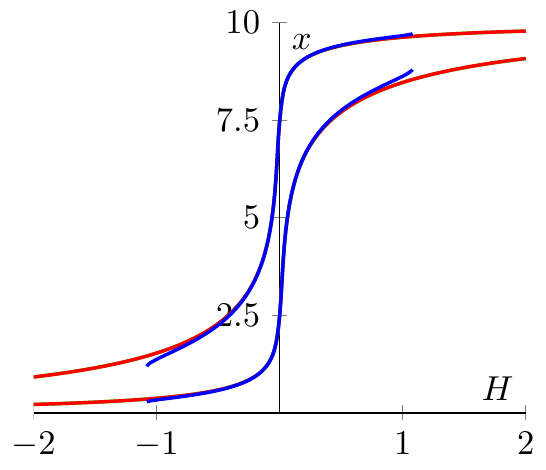}

			\caption{$N=2$, $L=10$}
		\end{subfigure}
		\begin{subfigure}[t]{0.24 \textwidth}

			\centering

				\includegraphics[width=\textwidth]{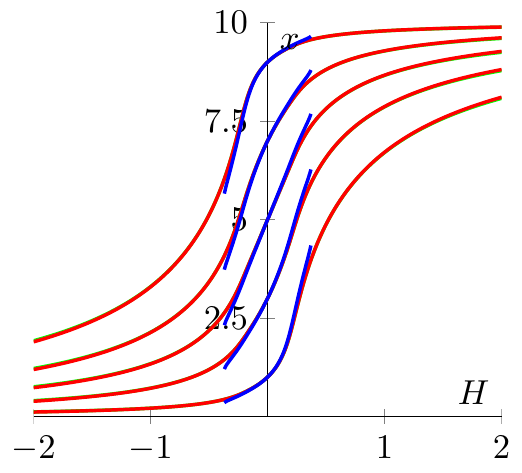}

			\caption{$N=5$, $L=10$}
		\end{subfigure}
		\begin{subfigure}[t]{0.24 \textwidth}

			\centering

				\includegraphics[width=\textwidth]{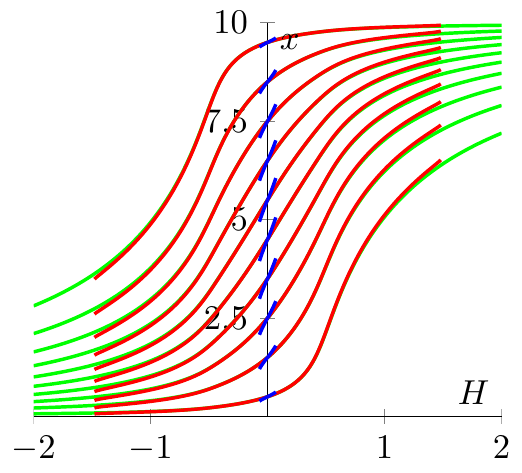}

			\caption{$N=10$,$L=10$}
		\end{subfigure}
	\caption{Stable fixed points of the pulse position ODE on a bounded domain of length $L = 10$ with Neumann boundary conditions for various number of pulses $N$. The green lines (generally laying below the other lines) indicate the fixed point of equation~\eqref{eq:ODEforgenH}, which is valid under assumption (A3). The blue and red lines indicate the fixed points of equation~\eqref{eq:ODEforgenH-deltaO1}, which is valid under assumption (A3'), for parameters $D = 0.01$, $m = 0.45$ and $a = 0.5$ (blue) or $a = 5$ (red). These lines are only plotted when the numerical solver could solve $\vec{F}(\vec{u}_0) = 0$; when it couldn't, a stationary pulse solution does not exist and a saddle-node bifurcation has happened, see section~\ref{sec:pulseODE-SaddleNode}.}
	\label{fig:FP-sloped}
\end{figure}

It should be noted that these (stable) fixed points of the ODE do not need to be stable fixed points of the full PDE. In fact, it can happen that a $N$-pulse configuration evolves under the ODE-flow to another $N$-pulse configuration that is unstable under the flow of the complete PDE -- even in the case of fixed parameter values. That is, a $N$-pulse configuration crosses the boundary of manifold $\mathcal{M}_N$. In Figure~\ref{fig:PDE-FP-unstable} we show a simulation in which this happens. Here we see that the pulses move uphill -- as indicated by the ODE flow -- and then annihilate -- by the PDE flow. This could also be predicted from Figure~\ref{fig:FP-sloped}, since the ODE does not have a fixed point for these parameters. These simulations also back our generalised-Ni conjecture~1: once again the pulse with the lowest $V$-peak disappears at the bifurcation.

Moreover, we also see that the decoupled stability check (DSP) also captures the PDE behaviour very well. This is remarkable here, since the corresponding asymptotic analysis in section~\ref{sec:DSP} is only valid for $m \gg 1 + H^2/4$, whereas here $m = 0.45 < 1$. At first glance the two simulations seem identical. However a better look reveals that the (DSP) ODE simulation gets rid of pulse slightly too early -- though it does give a good prediction on the pulse that is going to disappear. This effect gets exaggerated when more pulses are added to the simulation. In Figure~\ref{fig:NUM-FP-unstable-N10} we have done a simulation with $N = 10$ pulses. Here the mismatch between ODE and PDE simulation can be seen more easily.

\begin{figure}
	\centering
		\begin{subfigure}[t]{0.4 \textwidth}
			\centering
			\includegraphics[width = \textwidth]{"Figs/eps/ODE-slopeunstable-eps-converted-to"}
		\caption{ODE (DSP)}
		\end{subfigure}
		\begin{subfigure}[t]{0.4 \textwidth}
			\includegraphics[width = \textwidth]{"Figs/eps/PDE-slopeunstable-eps-converted-to"}
		\caption{PDE}
		\end{subfigure}
\\
		\begin{subfigure}[t]{0.27 \textwidth}
				\centering

				\includegraphics[width=\textwidth]{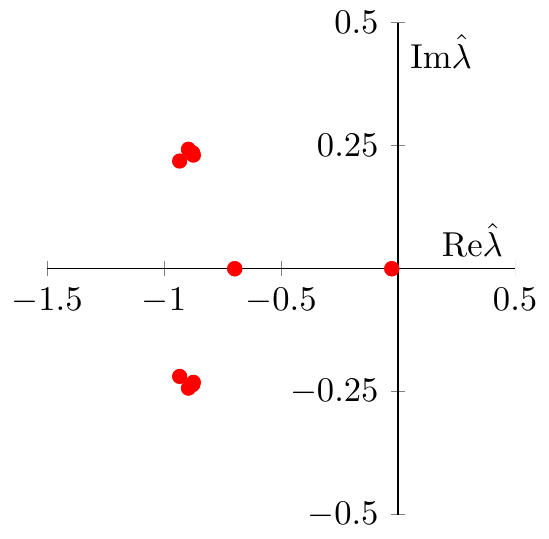}

			\caption{Location of eigenvalues (CSP)}
		\end{subfigure}
		\begin{subfigure}[t]{0.35 \textwidth}
				\centering

				\includegraphics[width=\textwidth]{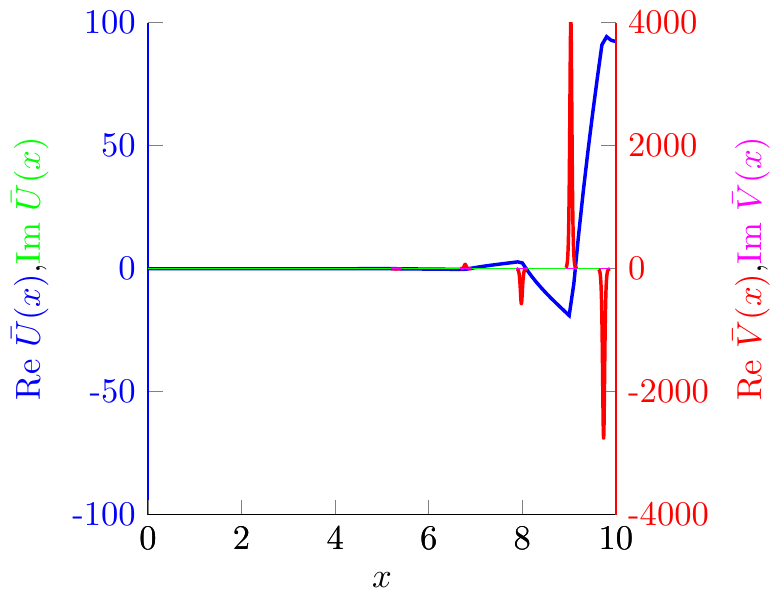}

			\caption{Destabilising eigenfunction (CSP)}
		\end{subfigure}

	\caption{Simulations of the extended Klausmeier model in which the fixed point of the ODE is unstable under the PDE flow, leading to annihilation of the most uphill pulse. The ODE simulation in (a) uses the decoupled stability approximation (DSP). In these simulations, the pulses start as a regular pattern and we have used the parameters $a = 0.5$, $H = 1$, $D = 0.01$, $m = 0.45$, $L = 10$. Moreover we show the location of the eigenvalues close to the moment the first pulse dies out in (c) and the destabilising eigenfunction (corresponding to $\hat{\lambda} \approx 0$) at the same moment is given in (d) -- both are determined using the coupled stability approach (CSP).  Note that (a) and (b) are also shown in Figure~\ref{fig:intro-SixPulsesOnHill}.}
\label{fig:PDE-FP-unstable}
\end{figure}

\begin{figure}
	\centering
		\begin{subfigure}[t]{0.4 \textwidth}
			\centering
			\includegraphics[width = \textwidth]{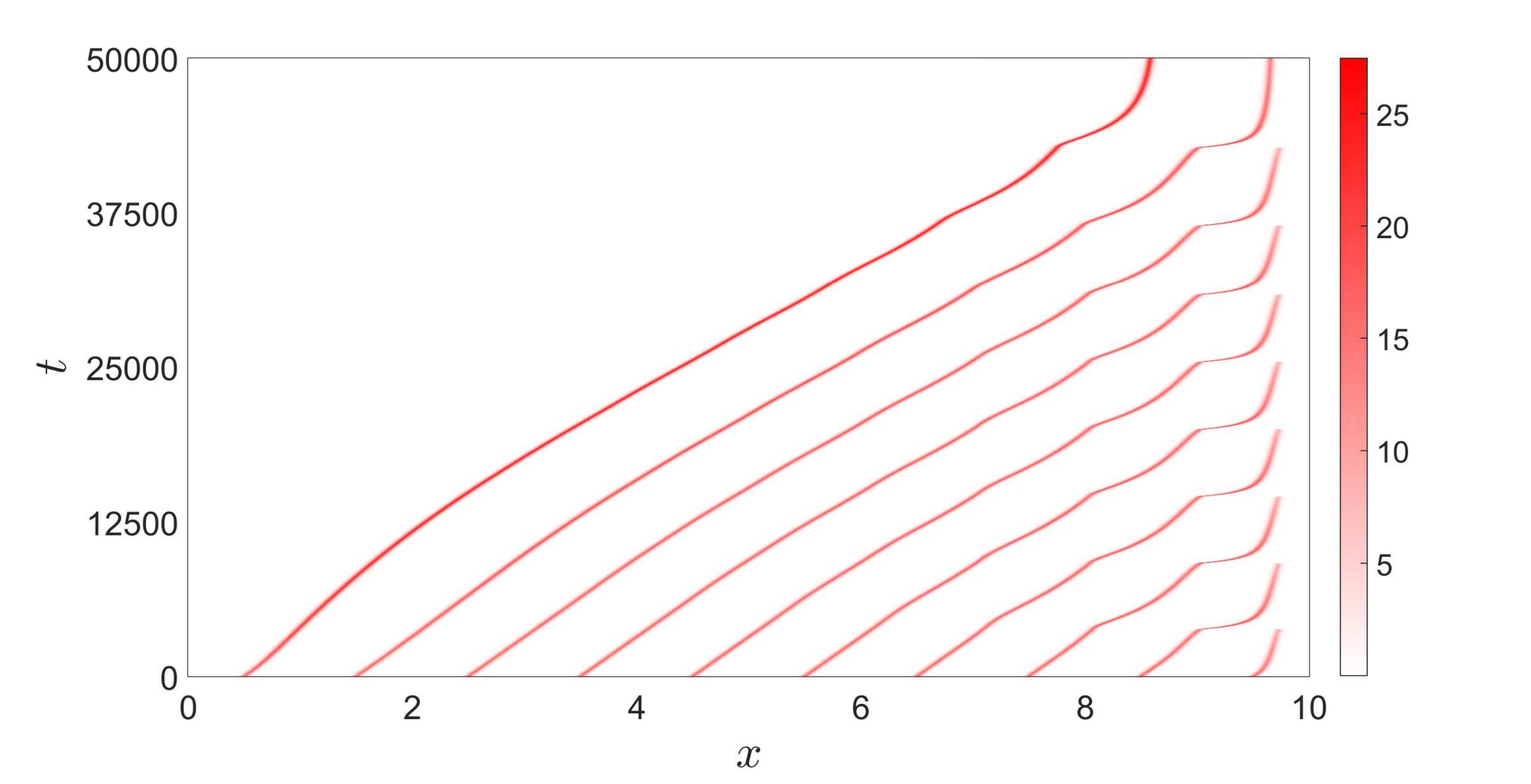}
		\caption{ODE (DSP)}
		\end{subfigure}
~
		\begin{subfigure}[t]{0.4 \textwidth}
			\includegraphics[width = \textwidth]{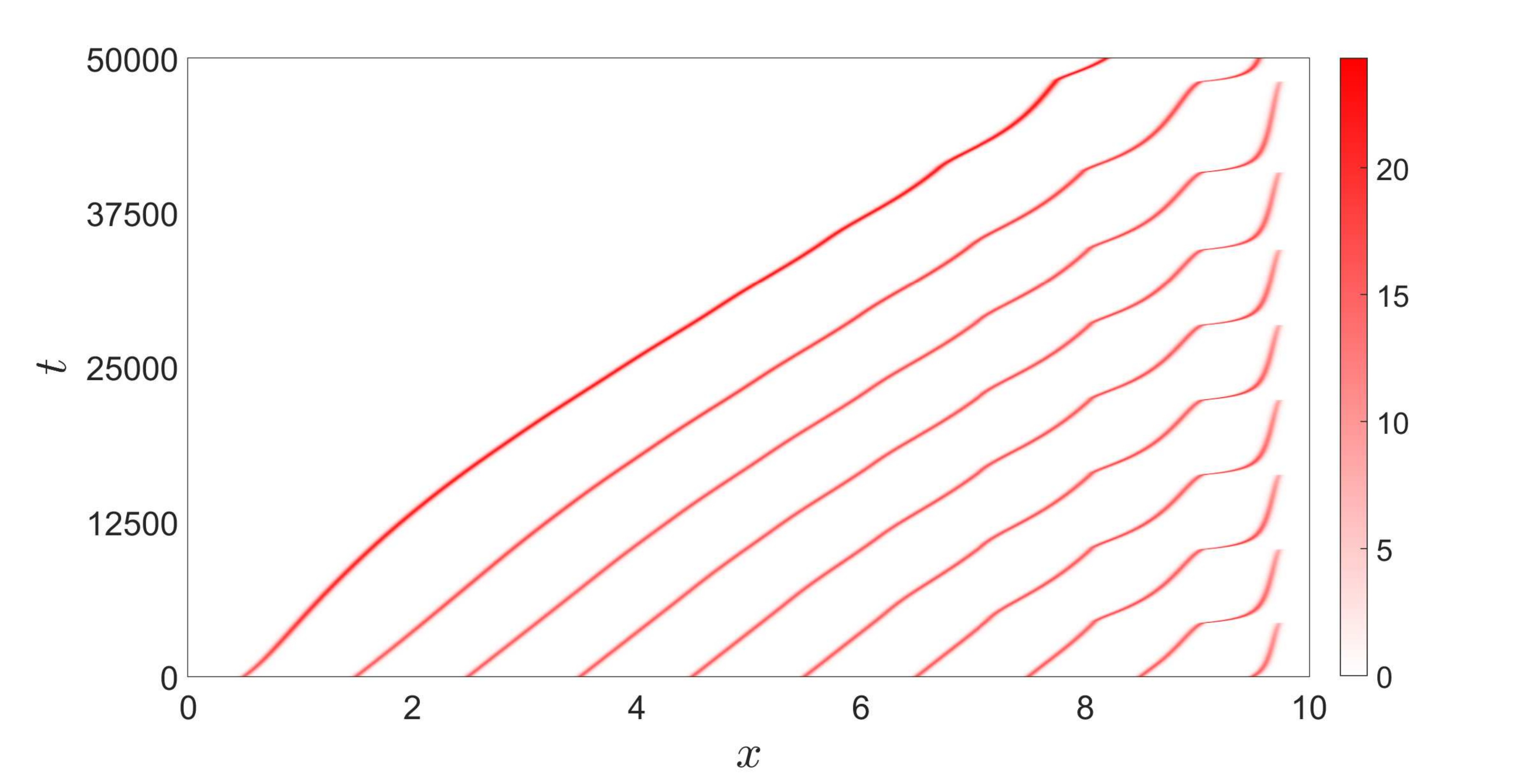}
		\caption{PDE}
		\end{subfigure}
\caption{Simulations of the extended Klausmeier model with $10$ pulses in which the fixed point of the pulse-location ODE is unstable under the PDE flow. The ODE simulation in (a) uses the decoupled stability approximation (DSP). In these simulations pulses start as a regular patterns. The mismatch between the ODE simulation in (a) and the direct PDE simulation in (b) can be seen from comparing these plots. The parameters used are $a = 0.5$, $H = 1$, $D = 0.01$, $m = 0.45$,  $L = 10$.}
\label{fig:NUM-FP-unstable-N10}
\end{figure}

%\FloatBarrier
\subsubsection{Varying Terrain}\label{sec:NUM-varSlope}

In the previous sections we have studied the extended Klausmeier model on terrains with a constant slope, i.e. $h(x) = H x$. In these situations it was possible to find an exact form of the solution in the outer regions. When we inspect a terrain with non-constant slope, it is in general not possible to find an exact solution in the outer region because these terms make the outer problem a non-autonomous problem, see~\eqref{eq:outer-ODE-U-A3}. It is therefore more complicated to study a varying terrain problem. In this section we briefly consider some cases, in which we use assumption (A3), i.e. $\tilde{U}(P_j) = 0$. We use a numerical boundary value problem solver, to find numerical approximations of the solution $\tilde{U}$ for the ODE~\eqref{eq:outer-ODE-U-A3} in the outer regions, between the pulses. In these situations we -- again -- see that the reduction gives a very good description of the movement of the pulses (see Figure~\ref{fig:expterrain1}). In the simulation of Figure~\ref{fig:expterrain1}, we have used a Gaussian function for the terrain, i.e. $h(x) = e^{-0.75 \left( x - \frac{L}{2} \right)^2}$, which resembles a hill with a top at $x = L/2$.

In section~\ref{sec:NUM-slopedTerrain} we saw that pulses on a constantly sloped terrain want to move uphill. Therefore one might be inclined to conclude that all pulses want to move uphill. Additional simulation with a single pulse reveal that it is also possible for a pulse to walk downhill. In Figure~\ref{fig:expterrain2} we show two simulations of the full PDE on a Gaussian terrain of the form $h(x) = \exp[ - B \left(x - \frac{L}{2} \right)^2]$. Here we see that the pulse moves uphill when $B$ is small and downhill when $B$ is bigger. This not necessarily contradicts the ecological intuition: we know that the movement of a pulse is determined through the water availability, see equation~\eqref{eq:ODEc1}. When the curvature of the terrain gets too big, it might happen that water streams downhill so fast that water builds up at the base of the hill. This would make this point, at the basis of the hill, the preferred spot for a pulse, because of the abundance of water and therefore the pulse moves downhill towards this point. The extended Klausmeier model with a more general varying terrain term is studied more in-depth in~\cite{varyingTerrainArticle}.

\begin{figure}[t!]
\centering
	\begin{subfigure}[t]{0.4 \textwidth}
			\centering
			\includegraphics[width = \textwidth]{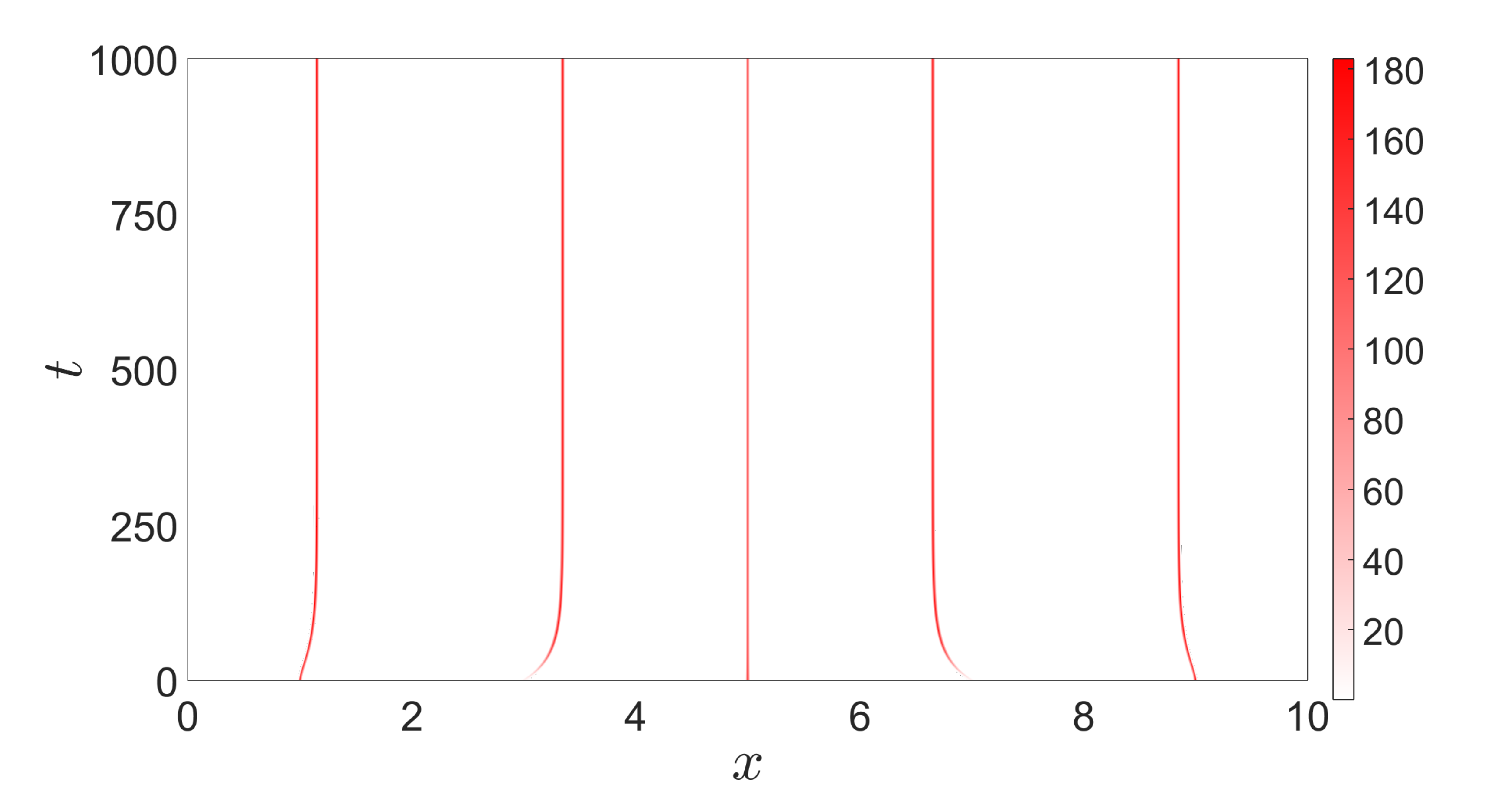}
		\caption{ODE}
	\end{subfigure}
~
	\begin{subfigure}[t]{0.4 \textwidth}
			\includegraphics[width = \textwidth]{"Figs/eps/exp_terrain_5_patternplot_ODE-eps-converted-to"}
		\caption{PDE}
	\end{subfigure}

\caption{The evolution of 5 pulses in simulations of the extended Klausmeier model with non-constantly sloped terrain $h(x) = \exp\left[ - 0.75 \left(x - \frac{L}{2} \right)^2\right]$, for the reduced pulse-location ODE (a) and the full PDE (b). In both simulations we have taken $a = 20$, $m = 20$, $D = 0.01$ and $L = 10$ and the starting configurations are the same, i.e. 5 pulses distributed equally over the domain. From these plots we again see that the ODE reduction agrees with the full PDE dynamics to a great extend.}
\label{fig:expterrain1}

\end{figure}

\begin{figure}[t!]
\centering
	\begin{subfigure}[t]{0.4 \textwidth}
			\centering
			\includegraphics[width = \textwidth]{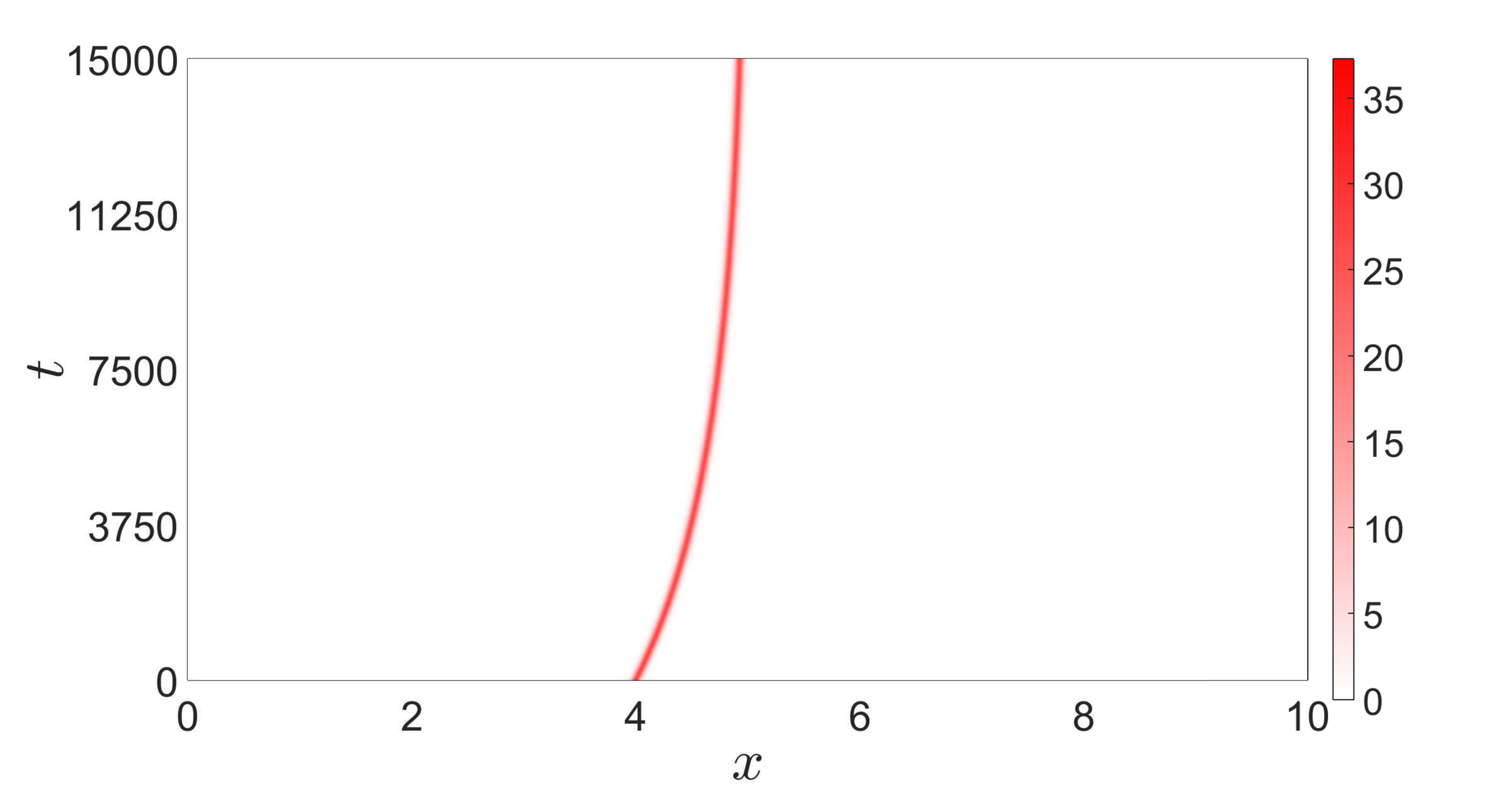}
		\caption{B=0.25: uphill movement}
	\end{subfigure}
~
	\begin{subfigure}[t]{0.4 \textwidth}
			\includegraphics[width = \textwidth]{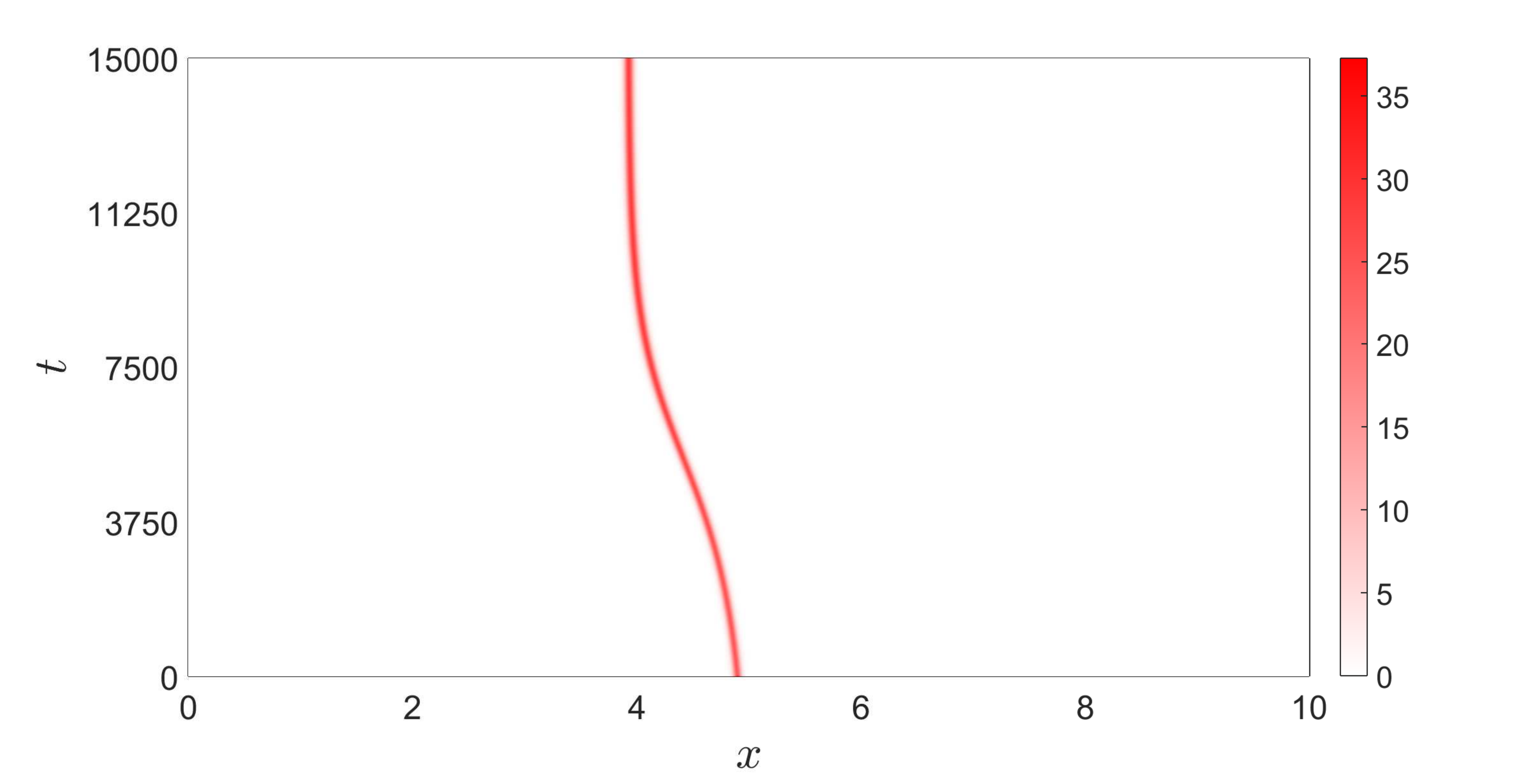}
		\caption{B=1: downhill movement}
	\end{subfigure}
\\

	\begin{subfigure}[t]{0.4 \textwidth}
			\centering
			\includegraphics{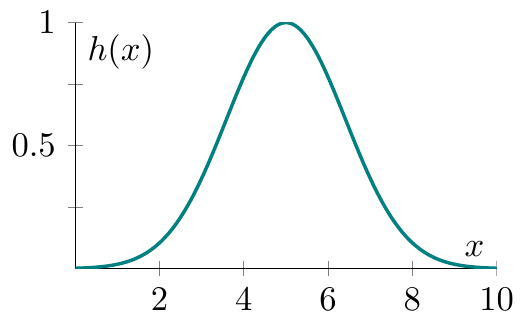}

		\caption{Height function with $B = 0.25$}
	\end{subfigure}
~
	\begin{subfigure}[t]{0.4 \textwidth}
			\centering
			\includegraphics{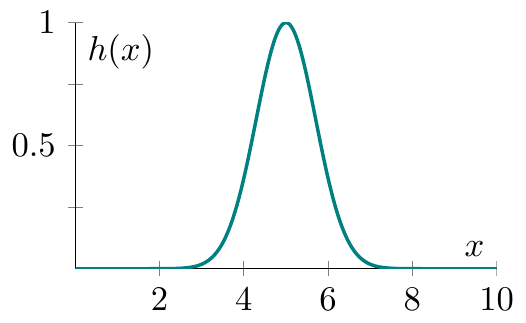}

		\caption{Height function with $B = 1$}
	\end{subfigure}
\caption{PDE Simulations of the full extended Klausmeier PDE model with a terrain with non-constants slope, $h(x) = \exp[-B\left(x-\frac{L}{2}\right)^2]$ for $B = 0.25$ (a) and $B = 1$ (b). Here we see that pulses can move downhill when the width of the hill becomes small. In both simulations we have taken $a = 0.5$, $m = 0.45$, $D = 0.01$, $L = 10$ and used Neumann boundary conditions. The form of the terrains $h(x)$ is plotted in Figures (c) and (d). The pulse-location ODE simulations show similar results (not shown).}
\label{fig:expterrain2}

\end{figure}

\subsubsection{Infiltration of vegetation in bare soil}

Finally, as an illustration of the applicability of our pulse-location ODE, we turn our attention to the phenomenon of colonisation. Observations of vegetation in semi-arid regions in the Sahel showed an inverse relation between the wavelength of vegetation patterns and the slope of the terrain~\cite{Eddy199957}: when the slope increased, the wavelength decreased. Recently, using numerical methods, it was shown that colonization of bare ground leads to the same inverse relationship, suggesting that those regions in the Sahel may once have been deserts~\cite{Sherratt-origins}. With our ODE description~\eqref{eq:ODEforgenH} it is possible to derive an analytic (approximate) expression for this inverse wavelength-slope relation.

The critical wavelength $d_c$ (i.e. the distance between the pulses), for a given terrain with constant slope $H$, is the wavelength for which the uphill moving effect due to the slope of the terrain is negated by the repulsive behaviour of the pulses uphill. If $d> d_c$ the lowest pulse moves uphill and colonization is argued to be unfeasible; if $d < d_c$ the lowest pulse moves downhill and colonization is possible. In our analysis we use assumption (A3), i.e. $\tilde{U}(P_j) = 0$. Therefore we can find the speed of the lowest pulse by only considering the distance to its neighbour pulse. Because the whole problem is symmetric in $H = 0$, we can assume for simplicity that $H \geq 0$. We let the lowest pulse be located at position $P_1$ and we let the distance to the neighbouring pulse uphill be denoted by $d$. We assume that there are no pulses further downhill (i.e. we put $P_0 = -\infty$). From equation~\eqref{eq:ODEforgenH} we then derive the speed of the first pulse as
\begin{equation}
\frac{dP_1}{dt} = \frac{D a^2}{m \sqrt{m}} \frac{1}{6} \left[ \left( \frac{H}{2} - \frac{\sqrt{H^2+4}}{2} \frac{e^{H d / 2} - \cosh( \sqrt{H^2+4}\ d / 2)}{\sinh( \sqrt{H^2+4}\ d / 2) } \right)^2 \right.\left. - \left( \frac{H}{2} - \frac{\sqrt{H^2+4}}{2}\right)^2 \right]. \label{eq:colonization1}
\end{equation}
To find the critical values for the wavelength $d_c$, we need to find the value $d$ for which $\frac{dP_1}{dt} = 0$. That is, we need to find the roots of the terms between the brackets in equation~\eqref{eq:colonization1}. In Figure~\ref{fig:colonization2} the resulting plot is shown. This indeed gives the inverse relationship between the slope $H$ and $d_c$ as reported in~\cite{Sherratt-origins}. It should be noted that these results match up very good when the slope $H$ is large, but start to differ when the slope $H$ is small\footnote{According to~\cite[Figure 5.c]{Sherratt-origins} the critical rainfall value $a_c$ increases when the slope $H$ increases. Therefore small slopes lead to small rainfall parameters, which in turn lead to a violation of assumption (A3).}; Unsurprisingly, precisely for these small slopes assumption (A3) is no longer valid.

\begin{figure}[t!]

	\centering

	\includegraphics[width=0.4\textwidth]{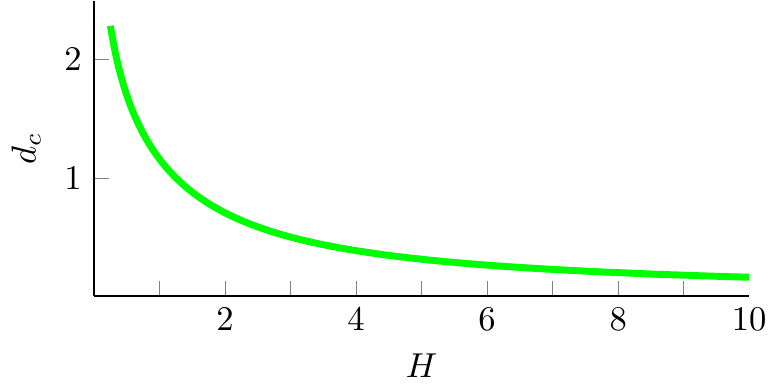}

	\caption{Inverse relationship between the slope $H$ and the critical wavelength $d_c$ for which colonization of bare ground becomes possible. This plot consists of the roots of equation~\eqref{eq:colonization1}. This graphic agrees with plot 5D in~\cite{Sherratt-origins}. Because we use a different scaling both the wavelength and the slope should be divided by$\sqrt{D}$ in~\cite{Sherratt-origins} to obtain the same qualitative plot. One can see that the plots indeed are in good agreement for steeper slopes, i.e. for higher $H$. For less steep slopes, the plots differ. Here the system is closer to the saddle-node bifurcation, which means that assumption (A3) is no longer valid.}
	\label{fig:colonization2}
\end{figure}

\section{Discussion and Outlook}
\label{sec:disc}

In this paper, we extended existing approaches and developed novel methods to study the dynamics of interacting pulse solutions in singularly perturbed 2-component reaction-diffusion systems with parameters that may vary in time and/or space, focusing on the extended Klausmeier -- or generalized Klausmeier-Gray-Scott -- model \eqref{eq:extKmodel1} as prototypical subject of study. We have (formally) shown that the PDE evolution of $N$-pulse patterns can be described by an $N$-dimensional dynamical system and that the solutions of this system live on an (approximate) $N$-dimensional invariant manifold $\mathcal{M}_N$. The stability -- and thus attractivity -- of this manifold is determined by the quasi-steady spectrum that we have determined by Evans function techniques. This analysis also provides insight in the location and nature of the (various components of the) boundary $\partial \mathcal{M}_N$ of $\mathcal{M}_N$, and in the nature of the (linear) destabilization mechanisms associated to $N$-pulse configurations crossing through $\partial \mathcal{M}_N$. Thus, we have found that the dynamics of $N$-pulse patterns can be splitted in two. Firstly, there is the (slow) dynamics \emph{on} the manifold $\mathcal{M}_N$ -- we captured this behaviour in an ODE~\eqref{eq:ODEc1} that describes the evolution of the pulse locations. Secondly, there is (fast) dynamics \emph{off} of $\mathcal{M}_N$, towards a lower-dimensional (approximate, attracting invariant) manifold $\mathcal{M}_M$ (with $M < N$). We have determined the linearized nature of this fall; the  hybrid numerical-asymptotic method developed in this paper predicts the value of $M$, describes the evolution of the resulting $M$ pulses on $\mathcal{M}_M$, and the cascade of jumps towards subsequent manifolds $\mathcal{M}_{\tilde{M}}$.

Our formal approach triggers various themes of further research. The validity of the very first step -- the reduction of the PDE dynamics to $\mathcal{M}_N$ -- is so far only established rigorously for a restricted region in parameter space -- see \cite{bellsky2013}. Moreover, the analysis of \cite{bellsky2013} is in the classical setting of non-varying parameters. Some of the numerical experiments presented in this article were conducted under similar conditions for which the results of \cite{bellsky2013} can be expected to hold; others, however, used parameters way beyond the regions considered in \cite{bellsky2013}. Nevertheless even in those cases the (formally) reduced system usually captures the dynamical movement of the pulses remarkably well. More surprisingly, the ODE reduction even is correct when the prime small parameter of our asymptotic analysis, i.e. $\frac{a}{m}$, is in fact not small, but order $\mathcal{O}(1)$. All in all, the reduction method seems to be valid for settings way beyond the reaches of current validity proofs. It would be extremely valuable to further develop the rigorous theory to understand why the reduction method is so successful.

The behaviour of a $N$-pulse patterns on manifold $\mathcal{M}_N$ was studied using the reduced pulse-location ODE~\eqref{eq:ODEc1}. Under the assumption that the coefficients related to $h(x)$ in \eqref{eq:extKmodel1} do not explicitly vary in $x$ -- $h'(x) \equiv H$, a constantly sloped terrain -- we found on bounded domains with Neumann boundary conditions that $N$-pulse configurations always evolve towards a specific stable fixed point of the ODE; on domains with periodic boundary conditions, the configurations always evolve towards a uniformly traveling solution in which all pulses are equally far apart. These results were proven {\it for the derived ODE approximation} under assumption (A3) in Appendix~\ref{sec:app-FixedPointProofs} and numerics indicate that these still hold under the less restrictive assumption (A3'). Moreover, when $h'(x)$ is allowed to vary -- i.e. for more realistic topographies -- simulations indicate that the pulse-location ODE still has stable fixed points (though there can be multiple fixed points, including unstable ones). A better understanding of the dynamics generated by reduced systems \eqref{eq:ODEc1} is necessary, especially from the ecological point of view. For instance, intuitively, pulses are expected to always move uphill (towards the downhill flowing water). However this mechanism only seems to be valid for terrains with constant slope ($h'(x) \equiv H$); on more realistic terrains pulses can move both uphill and downhill -- depending on the terrain's curvature. This may explain observations of vegetation patterns, that indeed sometimes evolve counter-intuitively (i.e. not uphill) \cite{deblauwe2012determinants, dunkerley2014vegetation}. As a first step towards these goals -- rigorous validation of $\mathcal{M}_N$ and understanding the dynamics on $\mathcal{M}_N$ - one first needs to rigorously establish existence and stability of stationary pulse solutions of~\eqref{eq:extKmodel1} with non-trivial $h(x)$ - this is the subject of~\cite{varyingTerrainArticle}.

The biggest `leap of faith' our method takes is the assumption that insights obtained from the asymptotic analysis of the quasi-steady spectrum can be extrapolated to capture the nonlinear, fast, PDE dynamics of an $N$-pulse configuration crossing through $\partial \mathcal{M}_N$ and jumping from $\mathcal{M}_N$ to $\mathcal{M}_M$ (with $M < N$). Our analysis showed that $\partial \mathcal{M}_N$ corresponds to `quasi-steady bifurcations' -- i.e. bifurcations induced by the intrinsic dynamics of the evolving multi-pulse pattern -- of several types: saddle node bifurcations for small values of $m$ (in \eqref{eq:extKmodel1}) and Hopf bifurcations and decoupled eigenfunctions for large values of $m$. In fact, our linear analysis only yielded information on the appearance of quasi-steady Hopf destabilisations; since all observations of Hopf bifurcations in singularly perturbed reaction-diffusion systems of slowly linear type are subcritical -- see \cite{veerman2015breathing} and the references therein -- we have assumed that all quasi-steady Hopf bifurcations are subcritical. Numerical simulations indicated the correctness of these assumptions in a wide variety of situations; the linear destabilisation arguments predict the fast nonlinear jump mechanisms surprisingly well. Moreover, we found that approximating the stability problem as a decoupled stability problem works convincingly well, even when the leading order asymptotic analysis implied that eigenfunctions are coupled: this a priori oversimplified approximation typically correctly predicts which pulses disappear -- i.e. towards which manifold $\mathcal{M}_M$ an $N$-pulse configuration jumps as it crosses through $\partial \mathcal{M}_N$; it does underestimate the stability slightly, leading to pulses that disappear/jump too early. To obtain a fundamental understanding of the `desertification dynamics' of $N$-pulse patterns in singularly perturbed reaction-diffusion systems -- i.e. the dynamics of pulse patterns jumping from manifolds $\mathcal{M}_k$ to $\mathcal{M}_\ell$ (with $0 \leq \ell < k \leq N$) -- it is crucial to develop analytical insights in the relative locations of the invariant manifolds $\mathcal{M}_n$, $n=1,2,...,N$ within function space, and the nature of the PDE flow between these manifolds. In general, this is a formidable challenge, but such a multi-scale analysis is expected to be possible in specially constructed settings.

Finally, we found that there is a striking difference between the dynamics of regular and irregular patterns. We found that irregular configurations always destabilise gradually -- with pulses disappearing one by one -- whereas for regular configuration either half or all pulses disappear `catastrophically' when  $\partial \mathcal{M}_N$ is crossed\footnote{for small $m$ there always is a period doubling; for large $m$ both a period doubling and a full collapse can happen.}. On the other hand, we also deduced that regularly spaced $N$-pulse configurations are more stable than any other $N$-pulse configuration -- in fact, irregular patterns typically evolve toward regularity on domains with periodic boundary (under specified conditions on $h(x)$ and the nature of the domain and associated boundary conditions). Thus, in situations in which parameters change (slowly) in time -- as $a(t)$ in \eqref{eq:extKmodel1} -- there is a competition between two `desertification scenarios': the gradual one for `sufficiently irregular' patterns in which the pattern step by step jumps down from $\mathcal{M}_k$ to $\mathcal{M}_{k-1}$, and the catastrophic one in which a `sufficiently regular' $N$-pulse pattern looses half or all pulses. The relative time scales of the variation of $a(t)$ versus the intrinsic rate of change of the $N$-pulse pattern as it evolves over $\mathcal{M}_N$ is a decisive ingredient that shapes this competition. A more subtle, but at least as important, ingredient is the -- at present not understood -- (slow) dynamics of the quasi-steady eigenfunctions as they evolve from the irregular setting of being localized around one pulse location to the global Floquet-type eigenfunctions -- see \cite{BjornRiccati} and the references therein -- associated with regular spatially periodic patterns.

\section*{Acknowledgements}
{\it We thank Tom Bellsky for inspiring dicussions. This study was supported by a grant within the Mathematics of Planet Earth program of the Netherlands Organization of Scientific Research (NWO).}

% Bibliography
\bibliographystyle{abbrv}
\bibliography{sources}

\begin{thebibliography}{10}

\bibitem{varyingTerrainArticle}
R.~Bastiaansen, M.~Chirilius-Bruckner, and A.~Doelman.
\newblock Existence and stability of stationary symmetric one-pulse solutions
  to the extended {K}lausmeier model with spatially varying coefficients, {\it
  in progress}.

\bibitem{BHM12}
G.~Bel, A.~Hagberg, and E.~Meron.
\newblock Gradual regime shifts in spatially extended ecosystems.
\newblock {\em Theoretical Ecology}, 5(4):591--604, 2012.

\bibitem{bellsky2013}
T.~Bellsky, A.~Doelman, T.~J. Kaper, and K.~Promislow.
\newblock Adiabatic stability under semi-strong interactions: the weakly damped
  regime.
\newblock {\em Indiana University Mathematics Journal}, 62(2):1809--1859, 2013.

\bibitem{chen2009oscillatory}
W.~Chen and M.~J. Ward.
\newblock Oscillatory instabilities and dynamics of multi-spike patterns for
  the one-dimensional {G}ray-{S}cott model.
\newblock {\em European Journal of Applied Mathematics}, 20(02):187--214, 2009.

\bibitem{chen2011stability}
W.~Chen and M.~J. Ward.
\newblock The stability and dynamics of localized spot patterns in the
  two-dimensional {G}ray-{S}cott model.
\newblock {\em SIAM Journal on Applied Dynamical Systems}, 10(2):582--666,
  2011.

\bibitem{BjornEigenvalues}
B.~de~Rijk.
\newblock Spectra and stability of periodic pulse patterns ii: the critical
  spectral curve, {\it to appear} (2018).

\bibitem{BjornRiccati}
B.~de~Rijk, A.~Doelman, and J.~D. Rademacher.
\newblock Spectra and stability of spatially periodic pulse patterns: {E}vans
  function factorization via riccati transformation.
\newblock {\em SIAM Journal on Mathematical Analysis}, 48(1):61--121, 2016.

\bibitem{deblauwe2012determinants}
V.~Deblauwe, P.~Couteron, J.~Bogaert, and N.~Barbier.
\newblock Determinants and dynamics of banded vegetation pattern migration in
  arid climates.
\newblock {\em Ecological monographs}, 82(1):3--21, 2012.

\bibitem{dek1siam}
A.~Doelman, W.~Eckhaus, and T.~J. Kaper.
\newblock Slowly modulated two-pulse solutions in the {G}ray-{S}cott model i:
  Asymptotic construction and stability.
\newblock {\em SIAM Journal on Applied Mathematics}, 61(3):1080--1102, 2000.

\bibitem{dek2siam}
A.~Doelman, W.~Eckhaus, and T.~J. Kaper.
\newblock Slowly modulated two-pulse solutions in the {G}ray-{S}cott model ii:
  Geometric theory, bifurcations, and splitting dynamics.
\newblock {\em SIAM Journal on Applied Mathematics}, 61(6):2036--2062, 2001.

\bibitem{doelman1998stability}
A.~Doelman, R.~A. Gardner, and T.~J. Kaper.
\newblock Stability analysis of singular patterns in the 1d {G}ray-{S}cott
  model: a matched asymptotics approach.
\newblock {\em Physica D: Nonlinear Phenomena}, 122(1):1--36, 1998.

\bibitem{Split-Evans}
A.~Doelman, R.~A. Gardner, and T.~J. Kaper.
\newblock Large stable pulse solutions in reaction-diffusion equations.
\newblock {\em Indiana University Mathematics Journal}, 50(1):443--507, 2001.

\bibitem{DK03}
A.~Doelman and T.~J. Kaper.
\newblock Semi-strong pulse interactions in a class of coupled
  reaction-diffusion equations.
\newblock {\em SIAM Journal on Applied Dynamical Systems}, 2(1):53--96, 2003.

\bibitem{DKP}
A.~Doelman, T.~J. Kaper, and K.~Promislow.
\newblock Nonlinear asymptotic stability of the semistrong pulse dynamics in a
  regularized {G}ierer--{M}einhardt model.
\newblock {\em SIAM Journal on Mathematical Analysis}, 38(6):1760--1787, 2007.

\bibitem{Ddestab}
A.~Doelman, J.~D.~M. Rademacher, B.~de~Rijk, and F.~Veerman.
\newblock Destabilization mechanisms of periodic pulse patterns near a
  homoclinic limit, {\it submitted} (2017).

\bibitem{Hopf-Dances}
A.~Doelman, J.~D.~M. Rademacher, and S.~van~der Stelt.
\newblock Hopf dances near the tips of busse balloons.
\newblock {\em Discrete and Continuous Dynamical Systems-Series S},
  {5}({1}):{61--92}, {2012}.

\bibitem{doelman2002homoclinic}
A.~Doelman and H.~van~der Ploeg.
\newblock Homoclinic stripe patterns.
\newblock {\em SIAM Journal on Applied Dynamical Systems}, 1(1):65--104, 2002.

\bibitem{dunkerley2014vegetation}
D.~L. Dunkerley.
\newblock Vegetation mosaics of arid western new south wales, australia:
  Considerations of their origin and persistence.
\newblock In {\em Patterns of Land Degradation in Drylands}, pages 315--345.
  Springer, 2014.

\bibitem{Eddy199957}
J.~Eddy, G.~Humphreys, D.~Hart, P.~Mitchell, and P.~Fanning.
\newblock Vegetation arcs and litter dams: similarities and differences.
\newblock {\em \{CATENA\}}, 37(1–2):57 -- 73, 1999.

\bibitem{Gerschgorin}
S.~Gerschgorin.
\newblock \"uber die abgrenzung der eigenwerte einer matrix.
\newblock {\em Bulletin de l'Acad\'emie des Sciences de l'URSS. Classe des
  sciences math\'ematiques et na}, pages 749--754, 1931.

\bibitem{gilad2004ecosystem}
E.~Gilad, J.~Von~Hardenberg, A.~Provenzale, M.~Shachak, and E.~Meron.
\newblock Ecosystem engineers: from pattern formation to habitat creation.
\newblock {\em Physical Review Letters}, 93(9):098105, 2004.

\bibitem{HW10}
A.~Hastings and D.~B. Wysham.
\newblock Regime shifts in ecological systems can occur with no warning.
\newblock {\em Ecology Letters}, 13:464--472, 2010.

\bibitem{klausmeier1999regular}
C.~A. Klausmeier.
\newblock Regular and irregular patterns in semiarid vegetation.
\newblock {\em Science}, 284(5421):1826--1828, 1999.

\bibitem{Kolokolnikov2005PS}
T.~Kolokolnikov, M.~J. Ward, and J.~Wei.
\newblock The existence and stability of spike equilibria in the
  one-dimensional {G}ray–{S}cott model on a finite domain.
\newblock {\em Applied Mathematics Letters}, 18(8):951 -- 956, 2005.

\bibitem{Kolokolnikov2005}
T.~Kolokolnikov, M.~J. Ward, and J.~Wei.
\newblock The existence and stability of spike equilibria in the
  one-dimensional {G}ray–{S}cott model: The low feed-rate regime.
\newblock {\em Studies in Applied Mathematics}, 115(1):21--71, 2005.

\bibitem{Kolokolnikov2005PSinGS}
T.~Kolokolnikov, M.~J. Ward, and J.~Wei.
\newblock The existence and stability of spike equilibria in the
  one-dimensional gray–scott model: The pulse-splitting regime.
\newblock {\em Physica D: Nonlinear Phenomena}, 202(3–4):258 -- 293, 2005.

\bibitem{kolokolnikov2005pulsesplitting}
T.~Kolokolnikov, M.~J. Ward, and J.~Wei.
\newblock Pulse-splitting for some reaction-diffusion systems in one-space
  dimension.
\newblock {\em Studies in Applied Mathematics}, 114(2):115--165, 2005.

\bibitem{BookMeron}
E.~Meron.
\newblock {\em Nonlinear Physics of Ecosystems}.
\newblock CRC Press, 2015.

\bibitem{Ni-conjecture}
W.-M. Ni.
\newblock Diffusion, cross-diffusion, and their spike-layer steady states.
\newblock {\em Notices of the AMS}, 45(1):9--18, 1998.

\bibitem{Pea93}
J.~Pearson.
\newblock Complex patterns in a simple system.
\newblock {\em Science}, 261:189--192, 1993.

\bibitem{Pro02}
K.~Promislow.
\newblock A renormalization method for modulational stability of quasi-steady
  patterns in dispersive systems.
\newblock {\em SIAM J. Math. Anal.}, 33(6):1455--1482, 2002.

\bibitem{Retal04}
M.~Rietkerk, S.~Dekker, P.~de~Ruiter, and J.~van~de Koppel.
\newblock Self-organized patchiness and catastrophic shifts in ecosystems.
\newblock {\em Science}, 305(5692):1926--1929, 2004.

\bibitem{RvdK08}
M.~G. Rietkerk and J.~van~de Koppel.
\newblock Regular pattern formation in real ecosystems.
\newblock {\em Trends in Ecology and Evolution}, 23(3):169--175, 2008.

\bibitem{Lottes}
L.~Sewalt and A.~Doelman.
\newblock Spatially periodic multi-pulse patterns in a generalized
  {K}lausmeier-{G}ray-{S}cott model.
\newblock {\em SIAM Journal on Applied Dynamical Systems}, 16(2):1113--1163.

\bibitem{SH13}
J.~A. Sherratt.
\newblock History-dependent patterns of whole ecosystems.
\newblock {\em Ecological Complexity}, 14:8--20, 2013.

\bibitem{Sherratt-origins}
J.~A. Sherratt.
\newblock Using wavelength and slope to infer the historical origin of semiarid
  vegetation bands.
\newblock {\em Proceedings of the National Academy of Sciences},
  112(14):4202--4207, 2015.

\bibitem{Eric-Striped}
E.~Siero, A.~Doelman, M.~B. Eppinga, J.~D. Rademacher, M.~Rietkerk, and
  K.~Siteur.
\newblock Striped pattern selection by advective reaction-diffusion systems:
  Resilience of banded vegetation on slopes.
\newblock {\em Chaos}, 25(3), 2015.

\bibitem{Eric-Beyond-Turing}
K.~Siteur, E.~Siero, M.~B. Eppinga, J.~D. Rademacher, A.~Doelman, and
  M.~Rietkerk.
\newblock Beyond {T}uring: The response of patterned ecosystems to
  environmental change.
\newblock {\em Ecological Complexity}, 20:81 -- 96, 2014.

\bibitem{Sun2005}
W.~Sun, M.~J. Ward, and R.~Russell.
\newblock The slow dynamics of two-spike solutions for the {G}ray-{S}cott and
  {G}ierer--{M}einhardt systems: Competition and oscillatory instabilities.
\newblock {\em SIAM Journal on Applied Dynamical Systems}, 4(4):904--953, 2005.

\bibitem{van2013rise}
S.~van~der Stelt, A.~Doelman, G.~Hek, and J.~D. Rademacher.
\newblock Rise and fall of periodic patterns for a generalized
  {K}lausmeier--{G}ray-{S}cott model.
\newblock {\em Journal of nonlinear science}, 23(1):39--95, 2013.

\bibitem{HeijsterFrontInt}
P.~van Heijster, A.~Doelman, T.~J. Kaper, and K.~Promislow.
\newblock Front interactions in a three-component system.
\newblock {\em SIAM Journal on Applied Dynamical Systems}, 9(2):292--332, 2010.

\bibitem{veerman2015breathing}
F.~Veerman.
\newblock Breathing pulses in singularly perturbed reaction-diffusion systems.
\newblock {\em Nonlinearity}, 28(7):2211, 2015.

\bibitem{veerman2013pulses}
F.~Veerman and A.~Doelman.
\newblock Pulses in a {G}ierer--{M}einhardt equation with a slow nonlinearity.
\newblock {\em SIAM Journal on Applied Dynamical Systems}, 12(1):28--60, 2013.

\end{thebibliography}

%%%
%%% APPENDICES
%%%
\appendix
\section{The movement of water on a varying terrain}
\label{A:hx}

\begin{figure}[t]
	\centering
	\begin{subfigure}[t]{0.4\textwidth}
		\centering
	\includegraphics{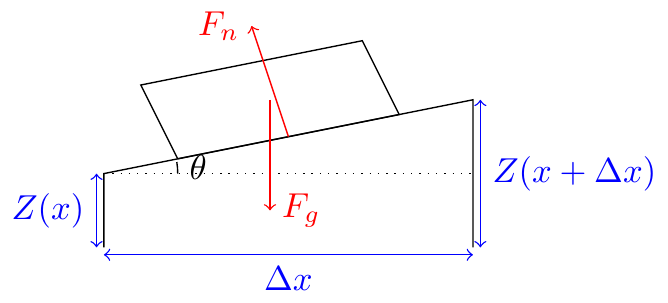}
	\caption{}
	\end{subfigure}
~
	\begin{subfigure}[t]{0.4\textwidth}
		\centering
	\includegraphics{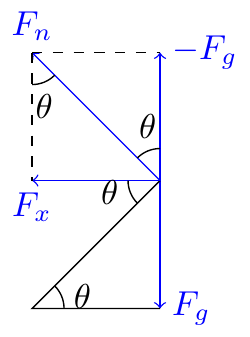}
	\caption{}
	\end{subfigure}
	\caption{Sketch of the classic `mass on incline' problem (a). A mass $M$ is positioned on a slope. Gravity has its effect on this mass and -- due to the normal force -- the box starts to move downwards. For water, the mass $M$ can be replaced by the density $\rho$. A sketch of the relevant forces and angles of the problem are given in (b).}
	\label{fig:appendixSlopeOnIncline}
\end{figure}

Previous versions of the extended Klausmeier model only considered terrains with a constant slope. The model studied in this article, however, is suitable for more generic terrains by the addition of the term $h_{xx} u$. In this appendix, we explain how this new term originates from a shallow water approximation. Here, we denote the concentration/height of water by $U(t,x,y)$, the height of the terrain by $H(x,y)$ and the speed of water by $\vec{v}(t,x,y,z)$. By the principles of mass conservation, a physical model should obey the continuity equation,
\begin{equation}
	\frac{dU}{dt}(t,\vec{x}) = - \vec{\nabla} \cdot \vec{j}(t,\vec{x}) + q(t,\vec{x}),
\end{equation}
where $\vec{j}(t,\vec{x})$ denotes the flux (of water) and $q$ includes all the sources and sinks of the model. In this situation, the flux constitutes of diffusion $\vec{j}_{diff} = - D \vec{\nabla} U$ and advection $\vec{j}_{adv} = \vec{v}U$. Since we want to understand the effect of a terrain, we focus only on the effect of advection. To describe the flow due to advection, we need to determine the velocity $\vec{v}$ of the water. The starting point for this are the momentum equations,
\begin{equation}
	\rho \frac{D \vec{v}}{Dt} = - \vec{\nabla}p + \vec{f}_{gravity} + \vec{f}_{friction}.
\end{equation}
Here, $\rho$ is the density of water, $p$ is the pressure, $\vec{f}$ denotes the forces that act on the water and $\frac{D}{Dt}$ is the material derivative. In this formulation friction is included as a force. Because the height of water (i.e. $U$) is small in semi-arid climates, a shallow water approximation can be made. Thus we assume that there is no movement in the $z$-direction, and that $\vec{v}$ and $\rho$ are constant as function of $z$. In addition, we assume that the pressure $p$ only depends on the $z$-coordinate, and that $\rho$ does not depend on $x$ or $y$. Therefore the $x$- and $y$-momentum equations simplify to
\begin{equation}
	\rho \frac{D \vec{v}}{Dt} = \vec{f}_{gravity} + \vec{f}_{friction}.
\end{equation}
For the force due to friction we assume Rayleigh friction, i.e. $\vec{f}_{friction} = - K \vec{v}$, where $K$ is a (Rayleigh) constant. The force due to gravity comes into play because of the sloped terrain. Ultimately, the computation of the contribution of $\vec{f}_{gravity}$ boils down to the `mass on incline' problem (see Figure~\ref{fig:appendixSlopeOnIncline}). In the continuum limit, this leads to $\vec{f}_{gravity} = -\rho g \tan(\vec{\theta}) = - \rho g \vec{\nabla}Z$, where $Z$ is the relevant height. There are several choices possible for this height. In this article we have chosen $Z = H$, the height of the terrain. Another often used choice is $Z = H + U$, the height of the terrain plus the height of the water; see~\cite{gilad2004ecosystem}.

As a final step, we use the diffusive wave approximation, i.e. $\frac{D\vec{v}}{Dt} = 0$. Combining everything yields the velocity
\begin{equation}
	\vec{v} = - \frac{\rho g}{K} \vec{\nabla} Z = - C \vec{\nabla} Z,
\end{equation}
where $C = \frac{\rho g}{K}$ is a constant. Therefore the advective flux is $\vec{j}_{adv} = - C U \vec{\nabla} Z$. Substitution in the continuity equation gives
\begin{equation}
	\frac{dU}{dt} = C \vec{\nabla} \cdot \left(U \vec{\nabla} Z \right)
\end{equation}
where we have suppressed the diffusive and reaction terms for clarity of presentation. The choice, $Z = H$, which we have made throughout this article, leads to
\begin{equation}
	\frac{dU}{dt} = C \vec{\nabla} \cdot \left(U \vec{\nabla} H \right)= C \vec{\nabla} U \cdot \vec{\nabla} H + C U \Delta H.
\end{equation}
The alternative choice, $Z = H + U$, leads to the expression that is used in e.g.~\cite{gilad2004ecosystem},
\begin{equation}
	\frac{dU}{dt} = C \vec{\nabla} \cdot \left(U \vec{\nabla} (H+U) \right)= \frac{C}{2} \Delta U^2 + C U \Delta H + C \vec{\nabla} U \cdot \vec{\nabla} Z
\end{equation}

\section{Fixed Points of the pulse-location ODE~\eqref{eq:ODEforgenH} -- Proofs}
\label{sec:app-FixedPointProofs}

In this appendix we give proofs of the claims in section~\ref{sec:ODEFixedPoints} about the fixed points of the pulse-location ODE~\eqref{eq:ODEforgenH}. Crucial in all these proofs is the fact that $\tilde{U}_x(P_k^\pm)$ is strictly increasing/decreasing as function of the distance to the neighbouring pulse. For notational simplicity we define the function $R_\pm$ as
\begin{align*}
	R_+(k) & := \left( \frac{H}{2} - \frac{\sqrt{H^2+4}}{2} \frac{e^{Hk/2} - \cosh\left( \sqrt{H^2+4}k/2\right)}{\sinh \left( \sqrt{H^2+4} k/2\right)} \right), \\
	R_-(k) & := \left( \frac{H}{2} + \frac{\sqrt{H^2+4}}{2} \frac{e^{-Hk/2} - \cosh\left( \sqrt{H^2+4} k/2 \right)}{\sinh \left( \sqrt{H^2+4}k/2 \right)} \right).
\end{align*}
The pulse-location ODE~\eqref{eq:ODEforgenH} can then be written as
\begin{equation*}
	\frac{dP_j}{dt} = \frac{D a^2}{m \sqrt{m}} \frac{1}{6} \left[ R_+(\Delta P_j)^2 - R_-(\Delta P_{j-1})^2\right]
\end{equation*}

\subsection{Properties of $R_\pm$}
Before we give the proofs for the fixed we point, we first need to study the functions $R_\pm$. First of all, straightforward limit computations reveal that
\begin{align*}
	\lim_{k \downarrow 0} R_\pm(k) & = 0, &
	\lim_{k \rightarrow \infty} R_\pm(k) & = \frac{H \pm \sqrt{H^2+4}}{2}.
\end{align*}
The derivative of $R_\pm$ is given by
\begin{equation*}
	R_\pm'(k) = \mp \frac{\sqrt{H^2+4}}{2} \frac{S_\pm(k)}{\sinh(\sqrt{H^2+4}k/2)^2},
\end{equation*}
where
{\small
\begin{align*}
	S_\pm(k) =
&\ \left[ \pm \frac{H}{2} e^{\pm H k / 2} - \frac{\sqrt{H^2+4}}{2}\sinh(\sqrt{H^2+4})\right] \sinh(\sqrt{H^2+4}k/2) - \frac{\sqrt{H^2+4}}{2} \cosh(\sqrt{H^2+4}k/2)\left[ e^{\pm Hk/2} - \cosh(\sqrt{H^2+4}k/2)\right] \\
=&\ \frac{\sqrt{H^2+4}}{2} + e^{\pm H k / 2} \left[ \pm \frac{H}{2} \sinh(\sqrt{H^2+4}k/2) - \frac{\sqrt{H^2+4}}{2} \cosh(\sqrt{H^2+4}k/2) \right] \\
= &\ \frac{\sqrt{H^2+4}}{2} + \frac{\pm H - \sqrt{H^2+4}}{4} e^{(\pm H + \sqrt{H^2+4})k/2} + \frac{\mp H - \sqrt{H^2+4}}{4}e^{(\pm H - \sqrt{H^2+4})k/2}.
\end{align*}}

That means that $R_\pm'(k)$ has a zero at $k$ only when $S_\pm(k) = 0$. With straightforward limit computations we can check that $S_\pm(0) = 0$ and that $\lim_{k \rightarrow \infty} |S_\pm(k)| = \infty$. Now, the derivative of $S_\pm$ is easy to compute:
\begin{align*}
	S_\pm'(k)
= &\ \frac{(\pm H - \sqrt{H^2+4})(\pm H + \sqrt{H^2+4})}{8} e^{(\pm H + \sqrt{H^2+4})k/2}  + \frac{(\mp H - \sqrt{H^2+4})(\pm H - \sqrt{H^2+4})}{8} e^{(\pm H - \sqrt{H^2+4})k/2} \\
= &\ - \frac{1}{2} e^{(\pm H + \sqrt{H^2+4})k/2} + \frac{1}{2} e^{(\pm H - \sqrt{H^2+4})k/2} \\
= &\ - e^{\pm H k / 2} \sinh(\sqrt{H^2+4}k/2).
\end{align*}
Hence $S_\pm'(k) = 0$ if and only if $k = 0$. Thus $S_+$ and $S_-$ are strictly decreasing in $k$. Since $S_\pm(0) = 0$ this means that $S_\pm$ has the same sign for all $k > 0$. Therefore $R_+'(k) > 0$ and $R_-'(k) < 0$ for all $k > 0$.

Finally we also need to know which function increases faster in absolute value. For that we can suffice to determine the sign of $R_+' + R_-'$, since $R_+$ is increasing from $0$ and $R_-$ is decreasing from $0$. That means we need to look at the sign of $- (S_+ - S_-)$. A direct computation reveals
\begin{equation}
	- \left[ S_+(k) - S_-(k) \right] = - H \cosh\left(\frac{H k}{2}\right) \sinh\left(\frac{\sqrt{H^2+4}k}{2}\right) + \sqrt{H^2+4} \sinh\left(\frac{H k}{2}\right) \cosh\left(\frac{\sqrt{H^2+4}k}{2}\right). \label{eq:app-fp-difference}
\end{equation}
Taking the derivative of this expression gives
\begin{equation*}
S_-'(k) - S_+'(k) = 2 \sinh(H k / 2) \sinh(\sqrt{H^2+4}k/2)
\end{equation*}
Unless $H = 0$ this expression is never zero for any $k > 0$. Combined with the fact that $R_+'(0) + R_-'(0) = 0$ this this reveals that $R_+' + R_-'$ does not change sign. We might now compute the limit for $k \rightarrow \infty$ to determine which grows faster. Taking the limit of~\eqref{eq:app-fp-difference} as $k \rightarrow \infty$ indicates $\sgn(R_+' + R_-') = \sgn(H)$. Thus if $H > 0$ we see that $R_+$ increases faster and if $H < 0$ $R_-$ increases faster in size; when $H = 0$ both increase at the same rate.

Summarizing everything from this section, we do know the following:
\begin{itemize}
	\item $R_+(k)$ is strictly increasing from $0$ to $\frac{H + \sqrt{H^2+4}}{2}$.
	\item $R_-(k)$ is strictly decreasing from $0$ to $\frac{H - \sqrt{H^2+4}}{2}$.
	\item If $H > 0$ $|R_+(k)|$ increases faster than $|R_-(k)|$; if $H < 0$ it is $|R_-(k)|$ that increases faster; if $H = 0$ they increase at the same rate.
\end{itemize}

\subsection{Unbounded domains}

\newtheorem{theorem}{Theorem}
\begin{theorem}
	On unbounded domains the pulse-location ODE~\eqref{eq:ODEforgenH} does not have any fixed points, unless $N = 1$ and $H = 0$.
\end{theorem}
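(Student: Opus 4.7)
The plan is to treat the cases $N=1$ and $N\geq 2$ separately, using the explicit boundary behaviour of $R_{\pm}$ established in the preceding subsection together with the observation that on unbounded domains the auxiliary distances $\Delta P_0$ and $\Delta P_N$ are infinite.

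For $N=1$, there is nothing to do beyond a direct evaluation: since $\Delta P_0,\Delta P_1 \to \infty$, one has $R_+(\Delta P_1)=\tfrac{H+\sqrt{H^2+4}}{2}$ and $R_-(\Delta P_0)=\tfrac{H-\sqrt{H^2+4}}{2}$, so the right-hand side of \eqref{eq:ODEforgenH} reduces to $\tfrac{D a^2}{6 m\sqrt{m}}\,H\sqrt{H^2+4}$, which vanishes iff $H=0$. This recovers the exceptional case in the statement and, incidentally, gives the uniform translation speed mentioned in the main text when $H\neq0$.

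For $N\geq 2$ the strategy is to show that the two \textit{boundary} equations of the fixed-point system are already incompatible, irrespective of the internal distances. Setting $dP_1/dt=0$ with $\Delta P_0=\infty$ forces
\begin{equation*}
R_+(\Delta P_1)^2 \;=\; R_-(\infty)^2 \;=\; \Bigl(\tfrac{\sqrt{H^2+4}-H}{2}\Bigr)^{\!2},
\end{equation*}
and since $R_+$ is a strictly increasing continuous bijection from $(0,\infty)$ onto $\bigl(0,\tfrac{\sqrt{H^2+4}+H}{2}\bigr)$, this can be solved for a finite $\Delta P_1>0$ only when $\tfrac{\sqrt{H^2+4}-H}{2}<\tfrac{\sqrt{H^2+4}+H}{2}$, i.e.\ iff $H>0$. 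Analogously $dP_N/dt=0$ with $\Delta P_N=\infty$ demands
\begin{equation*}
|R_-(\Delta P_{N-1})| \;=\; R_+(\infty) \;=\; \tfrac{\sqrt{H^2+4}+H}{2},
\end{equation*}
and since $|R_-|$ is a strictly increasing bijection from $(0,\infty)$ onto $\bigl(0,\tfrac{\sqrt{H^2+4}-H}{2}\bigr)$, a finite $\Delta P_{N-1}>0$ exists only when $H<0$. The two constraints exclude each other (and both fail at $H=0$, where each supremum is only approached, not attained), so no fixed point can exist.

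The key technical input is the strict monotonicity and the precise image of $R_\pm$ on $(0,\infty)$ --- these are exactly the properties recorded in the bullet list at the end of the preceding subsection, so the argument is essentially a bookkeeping of ranges. The only mild subtlety is remembering that suprema of $R_\pm$ are approached but never achieved, which is what rules out the degenerate $H=0$ case for $N\geq 2$ and pins down the equality case in the $N=1$ analysis.
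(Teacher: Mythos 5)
Your proof is correct and rests on exactly the same ingredients as the paper's: the strict monotonicity of $R_\pm$ and their (unattained) limits at infinity, applied to the equations for the outermost pulses whose auxiliary distances $\Delta P_0$, $\Delta P_N$ are infinite. The only cosmetic difference is that the paper first reduces to $H\geq 0$ by symmetry and works with the single equation $dP_N/dt=0$, whereas you keep both signs of $H$ and play the $j=1$ and $j=N$ equations against each other; both routes are equivalent.
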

\begin{proof}
	Without loss of generality we assume $H \geq 0$.

	To have a fixed point, we need to have $\frac{dP_j}{dt} = 0$ for all $j \in \{1,\ldots,N\}$. In particular we need $\frac{dP_N}{dt} = 0$. That is, $R_+(\Delta P_N)^2 = R_-(\Delta P_{N-1})^2$. Since $\Delta P_N \rightarrow \infty$ on unbounded domains we know that $R_+(\Delta P_N)^2 = \left(\frac{H + \sqrt{H^2+4}}{2}\right)^2$. However, we know that $R_-(k)^2 \in \left[0, \left( \frac{H - \sqrt{H^2+4}}{2} \right)^2\right]$ and that this function is strictly increasing. To have equality we therefore need $H = 0$ and $\Delta P_{N-1} \rightarrow \infty$. That is only possible if we only have one pulse, i.e. $N = 1$.
\end{proof}

\begin{theorem}
	On unbounded domains the pulse-location ODE~\eqref{eq:ODEforgenH} does not have a uniformly traveling solution in which all pulses move with the same speed, unless $N = 1$. The distance between the first and last pulse is always increasing.
\end{theorem}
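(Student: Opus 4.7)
The plan is to show directly that on an unbounded domain $P_N-P_1$ strictly increases for any $N\geq 2$, which immediately rules out the existence of uniformly traveling solutions (since uniform travel would force $\frac{d}{dt}(P_N-P_1)=0$). First I would record that, by the convention $P_0 \to -\infty$ and $P_{N+1}\to +\infty$ introduced in the paragraph on unbounded domains, the pulse-location ODE~\eqref{eq:ODEforgenH} for the first and last pulse reads
\begin{align*}
\frac{dP_1}{dt} &= \frac{Da^2}{m\sqrt{m}}\frac{1}{6}\bigl[R_+(\Delta P_1)^2 - R_-(\infty)^2\bigr],\\
\frac{dP_N}{dt} &= \frac{Da^2}{m\sqrt{m}}\frac{1}{6}\bigl[R_+(\infty)^2 - R_-(\Delta P_{N-1})^2\bigr],
\end{align*}
where $R_+(\infty)=\tfrac{H+\sqrt{H^2+4}}{2}$ and $R_-(\infty)=\tfrac{H-\sqrt{H^2+4}}{2}$ by the limit computation in Appendix~\ref{sec:app-FixedPointProofs}.

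Next I would subtract and regroup to obtain
\begin{equation*}
\frac{d}{dt}(P_N-P_1) = \frac{Da^2}{m\sqrt{m}}\frac{1}{6}\Bigl[\bigl(R_+(\infty)^2 - R_+(\Delta P_1)^2\bigr) + \bigl(R_-(\infty)^2 - R_-(\Delta P_{N-1})^2\bigr)\Bigr].
\end{equation*}
From the properties of $R_\pm$ established in the previous subsection, $R_+$ is strictly increasing from $0$ to $R_+(\infty)>0$, and $R_-$ is strictly decreasing from $0$ to $R_-(\infty)<0$; in particular $|R_+(k)|<R_+(\infty)$ and $|R_-(k)|<|R_-(\infty)|$ for every finite $k>0$. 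Hence both bracketed terms are strictly positive, so $\frac{d}{dt}(P_N-P_1)>0$ for all $t$. This establishes the second claim.

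Finally, for $N\geq 2$ a uniformly traveling solution would require $\frac{dP_1}{dt}=\frac{dP_N}{dt}$, i.e. $\frac{d}{dt}(P_N-P_1)=0$, contradicting the strict positivity just shown; this proves the first claim. The main (and really only) subtlety is to be careful that the strict inequalities $|R_\pm(k)|<|R_\pm(\infty)|$ hold for all \emph{finite} $k$, so that no degenerate configuration with some $\Delta P_j=\infty$ can sneak in when $N\geq 2$; this is exactly the monotonicity already proved. The case $N=1$ is trivial: there is only one pulse, so its motion is automatically ``uniform'', with speed $\frac{Da^2}{6m\sqrt{m}}\bigl(R_+(\infty)^2-R_-(\infty)^2\bigr)=\frac{Da^2}{6m\sqrt{m}}H\sqrt{H^2+4}$.
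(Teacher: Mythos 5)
Your proposal is correct and follows essentially the same route as the paper's own proof: both write $\frac{d}{dt}(P_N-P_1)$ using $\Delta P_0,\Delta P_N\to\infty$ so that $R_+(\Delta P_N)^2$ and $R_-(\Delta P_0)^2$ attain their (strict) suprema, and then use the strict monotonicity of $R_\pm(k)^2$ to conclude the derivative is strictly positive for $N\geq 2$. Your version merely makes the regrouping and the trivial $N=1$ case slightly more explicit.
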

\begin{proof}
	The situation in which $N = 1$ is trivially true. So we restrict ourselves to the cases $N > 1$.

	Now, if a solution with all pulses moving with the same speed would exist, then the distance between the first and last pulse needs to be constant, i.e. the following expression needs to hold true
\begin{equation*}
	0 = \frac{d}{dt} (P_N - P_1) = R_+(\Delta P_N)^2 + R_-(\Delta P_0)^2 - R_+(\Delta P_1)^2 - R_-(\Delta P_{N-1})^2
\end{equation*}
On unbounded domains we have $\Delta P_N \rightarrow \infty$ and $\Delta P_0 \rightarrow \infty$. Thus both $R_+(\Delta P_N)^2$ and $R_-(\Delta P_0)^2$ take on their maximum values. Since $R_\pm(k)^2$ are strictly increasing, the equality above can only hold true if $\Delta P_1 \rightarrow \infty$ and $\Delta P_{N-1} \rightarrow \infty$. That is not possible when $N > 1$. In particular we see that $\frac{d}{dt}(P_N-P_1) > 0$.
\end{proof}

\subsection{Bounded domains with periodic boundaries}

\begin{theorem}
	On bounded domains with periodic boundaries the pulse-location ODE~\eqref{eq:ODEforgenH} does not have any fixed points, unless $H = 0$.
\end{theorem}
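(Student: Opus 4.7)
The plan is to reduce a fixed-point condition to a cyclic chain of inequalities on the consecutive gaps $\Delta P_j$, and then derive a contradiction from periodicity. A fixed point of~\eqref{eq:ODEforgenH} requires $R_+(\Delta P_j)^2 = R_-(\Delta P_{j-1})^2$ for every $j \in \{1, \ldots, N\}$, and on a periodic domain the indices are taken modulo $N$ with $\Delta P_0 = \Delta P_N$. I will exploit the asymmetry between $|R_+|$ and $|R_-|$ that was established in the preceding subsection: when $H > 0$, $|R_+(k)| > |R_-(k)|$ for all $k > 0$, since both quantities vanish at $k = 0$ and $|R_+|$ grows strictly faster, and when $H < 0$ the opposite inequality holds.

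Assume first $H > 0$. For each $j$, the fixed-point equation yields
\begin{equation*}
|R_-(\Delta P_{j-1})| = |R_+(\Delta P_j)| > |R_-(\Delta P_j)|,
\end{equation*}
using the comparison above with $k = \Delta P_j > 0$. Because $|R_-|$ is a strictly increasing function on $(0,\infty)$, this forces $\Delta P_{j-1} > \Delta P_j$ for every $j$. Iterating around the cycle gives
\begin{equation*}
\Delta P_0 > \Delta P_1 > \Delta P_2 > \cdots > \Delta P_{N-1} > \Delta P_N = \Delta P_0,
\end{equation*}
a contradiction. The case $H < 0$ is entirely analogous: now $|R_-(k)| > |R_+(k)|$ for $k > 0$, so the equality $|R_+(\Delta P_j)| = |R_-(\Delta P_{j-1})|$ combined with strict monotonicity of $|R_+|$ forces $\Delta P_j > \Delta P_{j-1}$ for all $j$, again contradicting periodicity.

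The case $H = 0$ is excluded from the claim, and it is instructive to note \emph{why} the argument collapses there: the two strict comparisons $|R_+| > |R_-|$ and $|R_-| > |R_+|$ both degenerate to $|R_+(k)| = |R_-(k)|$, so the fixed-point relations reduce to $R_-(\Delta P_{j-1})^2 = R_-(\Delta P_j)^2$, which is satisfied by any regularly spaced configuration (consistent with the continuous family of fixed points asserted for $H = 0$ in section~\ref{sec:ODEFixedPoints}). The main obstacle I anticipate is purely one of bookkeeping: making the strict comparison $|R_+(k)| > |R_-(k)|$ for $H > 0$ and all $k > 0$ fully rigorous from the three summary bullet points at the end of the previous subsection. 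However, those bullets state that both $|R_+|$ and $|R_-|$ vanish at $0$ and that $|R_+|$ grows strictly faster everywhere for $H > 0$, so integrating this strict inequality of growth rates from $0$ suffices, and the remainder of the argument is a one-line cyclic-monotonicity contradiction.
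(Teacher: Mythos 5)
Your proof is correct, but it takes a different route from the paper's. The paper sums the fixed-point conditions over all pulses: by periodicity the reindexing $\sum_j R_-(\Delta P_{j-1})^2 = \sum_j R_-(\Delta P_j)^2$ turns $0 = \sum_j \frac{dP_j}{dt}$ into $0 = \sum_{j=1}^N \left[ R_+(\Delta P_j)^2 - R_-(\Delta P_j)^2 \right]$, and since for $H \neq 0$ each summand is nonzero with the same sign (because $R_\pm^2$ both vanish at $0$ and grow at strictly different rates), the sum cannot vanish. You instead work equation-by-equation: each condition $|R_+(\Delta P_j)| = |R_-(\Delta P_{j-1})|$, combined with the pointwise strict inequality $|R_+(k)| > |R_-(k)|$ (for $H>0$) and the strict monotonicity of $|R_-|$, forces $\Delta P_{j-1} > \Delta P_j$, and the cyclic chain of strict inequalities is self-contradictory. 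Both arguments rest on the same lemma from the preceding subsection (the sign of $R_+' + R_-'$ equals $\sgn(H)$, integrated from $k=0$); the paper's summation is slightly more economical in that it needs only the sign of $R_+^2 - R_-^2$ and not the monotonicity of $|R_\pm|$, whereas your version extracts a bit more structure, namely that any putative fixed point would have strictly monotone gaps around the cycle. Your aside about why the argument degenerates at $H=0$ correctly matches the continuous family of regularly spaced fixed points asserted in section~\ref{sec:ODEFixedPoints}.
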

\begin{proof}
	In the situation where $H = 0$ one can easily verify that a continuous family of pulse solutions exist by setting $\Delta P_j = L / N$ for all $j$.

	For all $H \neq 0$ we see that if such a fixed point exists, then the sum of the movement of all pulses needs to be zero, i.e. it is required that the following equality holds true
\begin{equation*}
	0 =  \sum_{j=1}^N \frac{dP_j}{dt} = \sum_{j=1}^N \left[ R_+(\Delta P_j)^2 - R_-(\Delta P_j)^2\right].
\end{equation*}
However, since $R_+(k)^2$ and $R_-(k)^2$ increase with a different rate, the terms $\left[ R_+(\Delta P_j)^2 - R_-(\Delta P_j)^2\right]$ are non-zero and carry the same sign for all $j$. Hence the equality does not hold and therefore the ODE does not have a fixed point.
\end{proof}

\begin{theorem}
	On bounded domains with periodic boundaries the pulse-location ODE~\eqref{eq:ODEforgenH} does have a continuous family of uniformly traveling solutions in which all pulses move with the same speed. The distance between pulses for those solutions is always given by $\Delta P_j = L / N$ for all $j$.\label{theorem:periodicTraveling}
\end{theorem}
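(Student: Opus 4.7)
The plan is to establish existence of the family by direct construction and then pin down the form via the strict monotonicity of the two `branches' $f(d) := R_+(d)^2$ and $g(d) := R_-(d)^2$ that appear in the ODE.

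For existence, I would set $P_j(t) = P_j(0) + c_0 t$ with $P_{j+1}(0) - P_j(0) = L/N$ for every $j$ (using the periodic identification $P_{N+1} = P_1 + L$) and $c_0 := \frac{D a^2}{6 m\sqrt m}\bigl[R_+(L/N)^2 - R_-(L/N)^2\bigr]$. Then $\Delta P_j(t) \equiv L/N$ for all $j$ and all $t$, so the right-hand side of~\eqref{eq:ODEforgenH} reduces to $c_0$ independently of $j$ and the ansatz solves the ODE. Taking $P_1(0) \in [0, L/N)$ as a free parameter yields a continuous one-parameter family of uniformly traveling solutions.

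For the characterization, suppose now that $\{P_j(t)\}$ is any uniformly traveling solution with common speed $c$. Writing $d_j = \Delta P_j$ (indices mod $N$, so $d_0 = d_N$, and $\sum_j d_j = L$), the condition that~\eqref{eq:ODEforgenH} produces the same value $c$ at every pulse reads $f(d_j) - g(d_{j-1}) = C$ for all $j$, where $C := 6 m\sqrt m\, c / (D a^2)$. From the properties of $R_\pm$ established earlier in Appendix~\ref{sec:app-FixedPointProofs}, $R_+$ is strictly increasing from $0$ so $f$ is strictly increasing on $(0,\infty)$, while $R_-$ is strictly decreasing from $0$ so $|R_-|$, and hence $g$, is strictly increasing on $(0,\infty)$. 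Consequently the recursion $d_j = f^{-1}(g(d_{j-1}) + C) =: \psi(d_{j-1})$ defines a strictly increasing map $\psi$ on its domain.

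The key step is then the elementary observation that a strictly increasing map admits no periodic orbit of period greater than one: if $\psi(d_*) > d_*$ then induction gives $\psi^{k+1}(d_*) > \psi^k(d_*)$, so the orbit is strictly increasing and cannot close up, and symmetrically if $\psi(d_*) < d_*$. The closure $d_N = \psi^N(d_N)$ therefore forces $\psi(d_N) = d_N$, whence $d_j = d_N$ for every $j$, and the constraint $\sum_j d_j = L$ yields $d_j = L/N$. The only mild technical point I anticipate is checking that $\psi$ is genuinely well-defined along the whole orbit---i.e. that $g(d_{j-1}) + C$ lies in the range of $f$---but this is automatic as soon as one assumes a uniformly traveling solution exists, so no genuine obstacle arises.
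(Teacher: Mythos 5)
Your proposal is correct and follows essentially the same route as the paper: the characterization rests on the same strict monotonicity of $R_+^2$ and $R_-^2$, and your ``a strictly increasing map has no periodic orbit of period greater than one'' is just a cleaner packaging of the paper's iteration $\Delta P_1 > \Delta P_N > \ldots > \Delta P_1$. The only (cosmetic) difference is that your map $\psi$ handles $H=0$ and $H\neq 0$ uniformly, whereas the paper treats $H=0$ as a separate case via $\sum_j dP_j/dt = 0$.
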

\begin{proof}
	In the situation where $H = 0$, we know that $\sum_{j=1}^N \frac{dP_j}{dt} = 0$. Therefore each pulse needs to be stationary. That is, $R_+(\Delta P_j)^2 = R_-(\Delta P_{j-1})^2$. Because $R_+(k)^2$ and $R_-(k)^2$ increase at the same rate (when $H = 0$) this means that $\Delta P_j = \Delta P_{j-1}$ for all $j$. As we need that $\sum_{j=1}^N \Delta P_j = N$ this indicates that $\Delta P_j = L / N$.

	Without loss of generality we now assume $H > 0$. To find a solution that has the desired property we need $\frac{dP_j}{dt} = \frac{dP_k}{dt}$ for all $j,k$. In particular we thus need to have
	\begin{equation*}
		R_+(\Delta P_j)^2 - R_-(\Delta P_{j-1})^2 = R_+(\Delta P_{j+1})^2 - R_-(\Delta P_j)^2 \mbox{ for all $j$.}
	\end{equation*}
Since $R_+(k)^2$ and $R_-(k)^2$ are strictly increasing, we can deduce the following: if $\Delta P_j > \Delta P_{j-1}$ then we also need $\Delta P_{j+1} > \Delta P_j$. Repeating this argument reveals $\Delta P_1 > \Delta P_N > \ldots > \Delta P_1$. This obviously cannot hold true and therefore a solution cannot have $\Delta P_j > \Delta P_{j-1}$ for any pulse $j$. Similarly we can exclude the possibility that $\Delta P_j < \Delta P_{j-1}$ for any $j$.

Therefore the only possibility left indicates that $\Delta P_j = \Delta P_k$ for all $j,k$. Since $\sum_{j=1}^N \Delta P_j = N$ that means that $\Delta P_j = L / N$. It is straightforward to check that this indeed gives a solution with the desired property.

\end{proof}

\begin{theorem}
	On bounded domains with periodic boundary conditions, the continuous family of regularly spaced solutions, with $\Delta P_j = L / N$, is stable under the flow of the ODE.
\end{theorem}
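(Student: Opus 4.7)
The plan is to reduce to the gap variables $d_j := \Delta P_j = P_{j+1}-P_j$ (with cyclic indexing and constraint $\sum_j d_j = L$), in which the family of uniformly traveling regular solutions collapses to the single fixed point $d_j^* \equiv L/N$ by Theorem~\ref{theorem:periodicTraveling}. Setting $d_j = d^* + \delta_j$ (with $\sum_j \delta_j = 0$) and linearizing
$$\dot d_j = C\bigl[R_+(d_{j+1})^2 - R_+(d_j)^2\bigr] - C\bigl[R_-(d_j)^2 - R_-(d_{j-1})^2\bigr], \qquad C := \tfrac{D a^2}{6 m\sqrt{m}},$$
yields
$$\dot\delta_j = C\bigl[\alpha_+(\delta_{j+1}-\delta_j) - \alpha_-(\delta_j-\delta_{j-1})\bigr], \qquad \alpha_\pm := 2R_\pm(d^*)R_\pm'(d^*).$$
The sign analysis of $R_\pm$ already carried out in this appendix gives $R_+, R_+' > 0$ and $R_-, R_-' < 0$ on $(0,\infty)$, so both $\alpha_+$ and $\alpha_-$ are strictly positive.

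The right-hand side is a circulant operator on $\mathbb{Z}/N\mathbb{Z}$, which I would diagonalize via the discrete Fourier modes $\delta_j = e^{2\pi i k j/N}$ for $k = 0,1,\ldots,N-1$. A direct substitution produces the eigenvalues
$$\lambda_k = C(\alpha_+ + \alpha_-)\bigl(\cos(2\pi k/N) - 1\bigr) + i C (\alpha_+ - \alpha_-)\sin(2\pi k/N),$$
so $\mathrm{Re}\,\lambda_k \le 0$ with equality only at $k = 0$. The eigenvector for $k = 0$ is the constant vector $(1,\ldots,1)^T$, which violates the constraint $\sum_j \delta_j = 0$; hence on the $(N-1)$-dimensional invariant subspace relevant to the gap dynamics all eigenvalues have strictly negative real part. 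This gives asymptotic stability of $d_j^* \equiv L/N$ in gap variables and, translated back to the pulse positions $P_j$, orbital (Lyapunov) stability of the continuous family of uniformly traveling configurations—the translation mode removed in passing to gap variables is exactly what parametrizes the family.

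The main conceptual obstacle is the correct bookkeeping of the neutral $k=0$ direction: it must be recognized as the translation symmetry of the ODE (producing motion along the family) rather than a degeneracy of the linearization, so that the appropriate notion of stability is orbital. Once the reduction to gap variables is made this is automatic, since the constant Fourier mode simply violates the zero-sum constraint. A minor bookkeeping point is that for $H\neq 0$ one has $\alpha_+ \neq \alpha_-$, so the nontrivial eigenvalues $\lambda_k$ are generically complex; this only reflects the uniform drift of the whole pattern and does not affect the sign of $\mathrm{Re}\,\lambda_k$. All remaining steps are purely algebraic and use only the monotonicity and sign information for $R_\pm$ already established.
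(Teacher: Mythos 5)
Your proposal is correct and follows the same overall route as the paper's proof: pass to the gap variables $\Delta P_j$ (so the travelling family collapses to the single fixed point $L/N$ on the zero-sum subspace), linearise, and analyse the resulting circulant matrix with off-diagonal entries $2R_+(L/N)R_+'(L/N)>0$ and $2R_-(L/N)R_-'(L/N)>0$, using exactly the sign and monotonicity facts about $R_\pm$ established earlier in the appendix. The only place the two arguments diverge is the final linear-algebra step: the paper shows the symmetric part of the circulant is negative definite on the zero-sum subspace via the quadratic form $\vec{r}^{\,T}M\vec{r}=-\tfrac{\alpha+\beta}{2}\sum_j(r_j-r_{j+1})^2$, whereas you diagonalise the circulant explicitly with discrete Fourier modes. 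Your version buys slightly more: it exhibits the eigenvalues $\lambda_k$ in closed form, makes transparent that they are genuinely complex when $H\neq 0$ (reflecting the uniform drift), and cleanly identifies the neutral $k=0$ mode with the translation symmetry excluded by the constraint $\sum_j\delta_j=0$; the paper's quadratic-form argument is shorter but only yields the sign of the real parts. Both are complete proofs.
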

\begin{proof}
	By Theorem~\ref{theorem:periodicTraveling} The regularly spaced solutions are fixed points of the related ODE
\begin{equation*}
	\frac{d}{dt} \Delta P_j = \frac{d P_j}{dt} - \frac{d P_{j-1}}{dt}.
\end{equation*}
We denote the fixed points of this equation by $\Delta P_j^*$ and we linearise around them by setting $\Delta P_j = \Delta P_j^* + r_j$, where $\sum_{j=1}^N r_j = 0$ because of the bounded domain. We then obtain
\begin{equation*}
	\frac{d r_j}{dt} = \frac{D a^2}{m \sqrt{m}} \frac{1}{3} \left[ R_+(\Delta P_{j+1}^*) R_+'(\Delta P_{j+1}^*) r_{j+1} - \left( R_+(\Delta P_{j}^*)R_+'(\Delta P_j^*) + R_-(\Delta P_j^*)R_-'(\Delta P_j^*) \right) r_j + R_-(\Delta P_{j-1}^*)R_-'(\Delta P_{j-1}^*) r_{j-1} \right]
\end{equation*}
Because $\Delta P_j^* = L / N$ for all $j \in \{1,\ldots,N\}$, we may define
\begin{align*}
	\alpha &:= R_+(\Delta P_j^*) R_+'(\Delta P_j^*) & \beta &:= R_-(\Delta P_j^*)R_-'(\Delta P_j^*) & C &:= \frac{D a^2}{m\sqrt{m}} \frac{1}{3}.
\end{align*}
Because $R_+$ is positive and increasing, and $R_-$ is negative and increasing, we know that $\alpha > 0$ and $\beta > 0$.

We then define $\vec{r} := \left(r_1,\ldots,r_N\right)^T$ and rewrite the linearised equation as
\begin{equation*}
\frac{d}{dt} \vec{r} = C M \vec{r},
\end{equation*}
where
\begin{equation*}
M := \left( \begin{array}{cccccc}
-(\alpha+\beta)	&	\alpha			&	0		& \cdots 	& 0 			&	\beta 	\\
\beta 			& -(\alpha+\beta) 	& \alpha		& 	0		& \cdots 	&	0		\\
0 				& \ddots 			& \ddots 	& \ddots 	& \ddots 	& \vdots 	\\
\vdots 			& \ddots 			& \ddots 	& \ddots 	& \ddots 	& 0 			\\
0 				& 					& \ddots 	& \ddots 	& \ddots 	& \alpha 	\\
\alpha 			& 0					& \cdots 	& 0 			& \beta 		& -(\alpha+\beta)
\end{array}\right)
\end{equation*}
The matrix $M$, with the additional constraint $\sum_{j=1}^N r_j = 0$ is negative definite, as a straight-forward computation yields (here $r_0 = r_N$ and $r_{N+1} = r_1$)
\begin{equation*}
f(\vec{r}) := \vec{r}^T M \vec{r} = - (\alpha + \beta) \sum_{j=1}^N r_j^2 + \alpha \sum_{j=1}^N r_j r_{j+1} + \beta \sum_{j=1}^N r_j r_{j-1} = - \frac{1}{2} \sum_{j=1}^N (r_j - r_{j+1})^2
\end{equation*}
Thus $f(\vec{r}) < 0$ unless $r_j = r_{j+1}$ for all $j$ -- which is excluded by the condition $\sum_{j=1}^N r_j = 0$. Thus the matrix associated with the linearisation is negative definite and therefore possesses only negative eigenvalues, proving that the regularly spaced configuration are stable under the flow of the ODE.
\end{proof}

\subsection{Bounded domains with Neumann boundary conditions}
\begin{theorem}
	On bounded domains with Neumann boundary conditions the pulse-location ODE~\eqref{eq:ODEforgenH} does always have precisely one fixed point.
\end{theorem}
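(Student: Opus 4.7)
The plan is to parametrize the fixed point problem by a single scalar (namely $\Delta P_0 = P_1 - P_0$) and reduce the question to solving one monotone scalar equation, then invoke the intermediate value theorem together with strict monotonicity.

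First I would rewrite the fixed-point conditions. At a fixed point, $\frac{dP_j}{dt}=0$ for every $j$, i.e.\ $R_+(\Delta P_j)^2 = R_-(\Delta P_{j-1})^2$. Since $R_+(k)>0$ and $R_-(k)<0$ for all $k>0$ (from the properties established in the first subsection of this appendix), squaring can be undone with a definite sign: $R_+(\Delta P_j) = -R_-(\Delta P_{j-1})$ for $j=1,\dots,N$. Without loss of generality I take $H\ge 0$ (the case $H<0$ follows by the reflection $x\mapsto L-x$); then $-R_-$ maps $(0,\infty)$ into a subset of the range of $R_+$, so the map $\phi := R_+^{-1}\circ(-R_-)$ is well-defined on $(0,\infty)$. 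Since $R_+$ is strictly increasing and $-R_-$ is strictly increasing (both with value $0$ at $0$), $\phi$ is a strictly increasing homeomorphism fixing $0$. The recurrence then reads $\Delta P_j = \phi(\Delta P_{j-1})$, so once $\Delta P_0$ is prescribed, every gap is determined by $\Delta P_j=\phi^j(\Delta P_0)$.

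Next I would encode the Neumann boundary conditions. As recalled in section~\ref{sec:Pulse-Location-ODE-(A3)}, the Neumann condition at the left endpoint determines the auxiliary $P_0$ as a function of $P_1$ (and hence $P_1$ as a strictly increasing continuous function of $\Delta P_0$, with $P_1\to 0$ as $\Delta P_0\to 0^+$), and the condition at the right endpoint determines $P_{N+1}$ as a function of $P_N$, giving $\Delta P_N = \Xi(P_N)$ where $\Xi$ is strictly decreasing with $\Xi(L)=0$ and $\Xi(0)>0$. Writing $P_N$ in terms of $\Delta P_0$,
\begin{equation*}
P_N(\Delta P_0) \;=\; P_1(\Delta P_0) + \sum_{j=1}^{N-1}\phi^j(\Delta P_0),
\end{equation*}
the remaining fixed point equation $\Delta P_N = \phi^N(\Delta P_0)$ becomes the single scalar condition
\begin{equation*}
\phi^N(\Delta P_0) \;=\; \Xi\!\bigl(P_N(\Delta P_0)\bigr).
\end{equation*}

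Finally I would apply monotonicity plus continuity to this scalar equation. The left-hand side $\phi^N(\Delta P_0)$ is strictly increasing in $\Delta P_0$ (composition of strictly increasing maps) and continuous, starting at $0$ and going to the supremum of the range of $\phi^N$. The right-hand side $\Xi(P_N(\Delta P_0))$ is strictly decreasing and continuous in $\Delta P_0$, since $P_N(\Delta P_0)$ is strictly increasing (each summand is) and $\Xi$ is strictly decreasing. At $\Delta P_0\to 0^+$ we have LHS $\to 0$ and RHS $\to \Xi(0)>0$, so LHS $<$ RHS; at the value $\Delta P_0^{\max}$ for which $P_N(\Delta P_0)=L$ (which exists by continuity and the fact that $P_N$ is strictly increasing starting from $0$), we have RHS $=\Xi(L)=0$ while LHS $>0$, so LHS $>$ RHS. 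By the intermediate value theorem together with the strict monotonicity of LHS $-$ RHS, there exists exactly one $\Delta P_0\in(0,\Delta P_0^{\max})$ satisfying the scalar equation, and it yields a unique valid configuration $0<P_1<\dots<P_N<L$.

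The main technical obstacle is keeping track of the domain on which $\phi$, and in particular its iterate $\phi^N$, is well-defined: for $H\neq 0$ the ranges of $R_+$ and $-R_-$ differ, so $\phi$ is only a self-map of the gap interval $(0, k^{**})$ with $k^{**}=R_+^{-1}\bigl(\tfrac{\sqrt{H^2+4}-H}{2}\bigr)$ (for $H\geq 0$). One must check that as $\Delta P_0$ varies over the parameter interval used above, all iterates $\phi^j(\Delta P_0)$ stay inside this admissible range—this is where the monotonicity properties of $R_\pm$ derived earlier in the appendix, together with WLOG $H\ge 0$, do the bulk of the work. The remaining boundary-map monotonicities (i.e.\ $P_1$ increasing in $\Delta P_0$ and $\Xi$ decreasing in $P_N$) are exactly the monotonicity statements recorded in section~\ref{sec:Pulse-Location-ODE-(A3)}, so they can be invoked directly.
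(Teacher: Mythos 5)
Your proposal is correct and follows essentially the same route as the paper's proof: your map $\phi = R_+^{-1}\circ(-R_-)$ is exactly the paper's strictly increasing function $y$, and both arguments reduce the problem to intersecting a strictly increasing description of $\Delta P_N$ (coming from iterating the gap recursion) with a strictly decreasing one (coming from the right Neumann condition), parametrized by $P_1$ or, equivalently, by $\Delta P_0$. The ``technical obstacle'' you flag is in fact vacuous: for $H\ge 0$ the range of $-R_-$ is contained in the range of $R_+$, so $\phi$ maps all of $(0,\infty)$ into $(0,k^{**})\subset(0,\infty)$ and every iterate is automatically well-defined.
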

\begin{proof}
	Without loss of generality we assume $H \geq 0$. For all $H \geq 0$ we have $R_+(k)^2 \geq R_-(k)^2$ for all $k > 0$. Thus for all $x \geq 0$ there is a $y = y(x) \geq 0$ such that $R_+(y(x))^2 = R_-(x)^2$. Since $R_\pm(k)^2$ is strictly increasing, we know that $y$ is strictly increasing in $x$ as well.

	Now, to have a fixed point $P_1^*, \ldots, P_N^*$ we need
\begin{equation}
	R_-(\Delta P_{j-1})^2 = R_+(\Delta P_j)^2 \mbox{ for all $j$. }
\end{equation}
Because of our reasoning above there are strictly increasing functions $y_j$ such that
\begin{equation}
	R_-(\Delta P_{j-1})^2 = R_+( y_j(\Delta P_{j-1}) )^2 \mbox{ for all $j$. }
\end{equation}
So we should choose $P_1, \ldots, P_N$ such that $\Delta P_{j} = y_j(\Delta P_{j-1})$. That is,
\begin{equation}
\Delta P_j = \left( y_j \circ \ldots \circ y_1 \right) (\Delta P_0).
\label{eq:app-yj-application}
\end{equation}
In particular we have $\Delta P_N = \left( y_N \circ \ldots \circ y_1 \right) (\Delta P_0)$. Because $\Delta P_0$ is strictly increasing in $P_1$, we know that this expression for $\Delta P_N$ is a strictly increasing in $P_1$.

At the same time our solution should fit in the domain and therefore we know that $\Delta P_N$ is strictly decreasing in
\begin{equation*}
P_N = P_1 + y_1(\Delta P_0) + \ldots + \left(y_N \circ \ldots y_1\right)(\Delta P_0).
\end{equation*}
Therefore this expression for $\Delta P_N$ is also strictly decreasing in $P_1$.

So we now have two descriptions of $\Delta P_N$ which should be equal. One of these is strictly increasing in $P_1$ starting from $0$ and the other is strictly decreasing in $P_1$ starting from $L$. Therefore there is precisely one location $P_1 = P_1^*$ that leads to equality of these descriptions. The other locations follow from equation~\eqref{eq:app-yj-application}. This leads to a unique fixed point of~\eqref{eq:ODEforgenH}.

\end{proof}

\begin{theorem}
	On bounded domains with Neumann boundary conditions, the unique fixed point solution of~\eqref{eq:ODEforgenH} is stable under the flow of the ODE.
\end{theorem}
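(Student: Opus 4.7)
The plan is to linearize the ODE around the unique fixed point $\vec P^{\,*}=(P_1^*,\dots,P_N^*)$ whose existence was just proved, and then exhibit a weighted quadratic Lyapunov functional that makes the Jacobian Hurwitz. By symmetry $H\leftrightarrow -H$ we may assume $H\ge 0$, so that all the sign/monotonicity statements for $R_\pm$ collected in the appendix apply.

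Writing $P_j=P_j^*+r_j$ and expanding~\eqref{eq:ODEforgenH} to first order, using $R_+(\Delta P_j^*)^2=R_-(\Delta P_{j-1}^*)^2$ at the fixed point, I would first derive the tridiagonal linear system
$$\dot r_j \;=\; 2C\bigl[\alpha_j(r_{j+1}-r_j)-\beta_{j-1}(r_j-r_{j-1})\bigr],\qquad j=2,\dots,N-1,$$
where $C=Da^2/(6m\sqrt m)>0$, $\alpha_j:=R_+(\Delta P_j^*)R_+'(\Delta P_j^*)>0$ and $\beta_j:=R_-(\Delta P_j^*)R_-'(\Delta P_j^*)>0$ (both strictly positive because $R_+>0$, $R_-<0$, $R_+'>0$, $R_-'<0$). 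At the two boundary pulses the Neumann conditions make $\Delta P_0$ a function of $P_1$ alone and $\Delta P_N$ a function of $P_N$ alone, and the monotonicity statements quoted in section~\ref{sec:Pulse-Location-ODE-(A3)} produce positive constants $\gamma_0,\gamma_N>0$ with
$$\dot r_1=2C\alpha_1(r_2-r_1)-2C\beta_0\gamma_0\,r_1,\qquad \dot r_N=-2C\alpha_N\gamma_N\,r_N-2C\beta_{N-1}(r_N-r_{N-1}).$$
This identifies the Jacobian $M$ as a tridiagonal matrix with \emph{strictly} positive off–diagonals and strictly negative diagonal entries.

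Next I would symmetrize $M$ through a diagonal conjugation: define weights recursively by $c_1=1$ and $c_{j+1}=c_j\alpha_j/\beta_j>0$, and set $\omega_{j+1}:=c_j\alpha_j=c_{j+1}\beta_j$. The candidate Lyapunov function is $V(\vec r)=\tfrac12\sum_{j=1}^N c_jr_j^2$; the recursion is precisely chosen so that $\mathrm{diag}(c_j)\,M$ is symmetric. A direct but careful accounting — analogous to (and slightly more intricate than) the computation that produced $-\tfrac12\sum(r_j-r_{j+1})^2$ in the periodic case — should yield the clean identity
$$\dot V\;=\;-2C\Bigl[\,c_1\beta_0\gamma_0\,r_1^2\;+\;\sum_{j=1}^{N-1}\omega_{j+1}(r_j-r_{j+1})^2\;+\;c_N\alpha_N\gamma_N\,r_N^2\Bigr].$$
Every coefficient is strictly positive, so $\dot V\le0$ with equality only when the first term forces $r_1=0$, the middle terms force $r_1=r_2=\dots=r_N$, and the last term forces $r_N=0$; chained together these imply $\vec r=\vec 0$. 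Hence $V$ is a strict Lyapunov functional for the linearization, $M$ is negative definite in the weighted inner product $\langle\vec u,\vec v\rangle_c=\sum c_ju_jv_j$, and its eigenvalues are all real and strictly negative. Asymptotic stability of $\vec P^{\,*}$ then follows from standard Lyapunov theory.

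The main obstacle I anticipate is making the boundary contributions rigorous for \emph{general} $H$. For $H=0$ one has the explicit reflections $P_0=-P_1$ and $P_{N+1}=2L-P_N$, whence $\gamma_0=\gamma_N=2$ by inspection. For $H\neq 0$ the auxiliary locations $P_0$ and $P_{N+1}$ are only defined implicitly (as the extraneous zeros of the extended outer solutions), so one has to justify smoothness and strict positivity of $\gamma_0=d\Delta P_0/dP_1$ and $\gamma_N=-d\Delta P_N/dP_N$ by differentiating the Neumann boundary equations $\tilde U_x(0)=0$ and $\tilde U_x(L)=0$ implicitly; this is essentially the content of the monotonicity claim in the excerpt but needs to be spelled out. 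Aside from this, and the routine algebra for the identity for $\dot V$ (which I would first verify in the transparent $H=0$ case as a sanity check), the proof is largely a weighted adaptation of the periodic argument.
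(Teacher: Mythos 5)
Your proposal is correct, but it takes a genuinely different route from the paper's. The paper linearises to the same tridiagonal Jacobian $M$ (positive off-diagonals $\alpha_j,\beta_j$, extra negative boundary contributions proportional to $\gamma_1\beta_1$ and $\gamma_N\alpha_N$), but then argues spectrally rather than variationally: the Gershgorin circle theorem places all eigenvalues in disks whose closure meets the right half-plane only at the origin, and $\lambda=0$ is then excluded by a bespoke recursion argument — a putative null vector must have components $x_j=\delta_j x_1$ with $\delta_j$ strictly increasing, which contradicts the last row of $M\vec{x}=\vec{0}$. Your diagonal symmetrisation with weights $c_{j+1}=c_j\alpha_j/\beta_j$ is a standard and valid alternative for tridiagonal matrices with positive off-diagonals, and the claimed sum-of-squares identity does check out: writing $\omega_{j+1}:=c_j\alpha_j$, the cross term $\omega_{j+1}r_j(r_{j+1}-r_j)$ from row $j$ pairs with $-\omega_{j+1}r_{j+1}(r_{j+1}-r_j)$ from row $j+1$ to give $-\omega_{j+1}(r_{j+1}-r_j)^2$, and the boundary rows contribute the two strictly negative endpoint terms, so equality in $\dot V\le 0$ forces $\vec{r}=\vec{0}$. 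Your approach buys something the paper's does not: $M$ is self-adjoint and negative definite in the weighted inner product, so all eigenvalues are \emph{real} and negative (Gershgorin alone leaves open complex eigenvalues with negative real part), and $V$ is an explicit Lyapunov functional that could in principle be promoted to a nonlinear basin-of-attraction statement; this also makes the Neumann case a direct weighted analogue of the paper's own periodic-case proof. The one substantive point you flag — strict positivity of the boundary derivatives $\gamma_0$ and $\gamma_N$ for general $H$ — is exactly the same ingredient the paper relies on, invoking the monotonicity of the auxiliary locations $P_0(P_1)$ and $P_{N+1}(P_N)$ asserted (without detailed proof) in section~\ref{sec:Pulse-Location-ODE-(A3)}, so your argument is no less complete than the published one on that score.
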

\begin{proof}
We denote the fixed point as $P_1^*, \ldots, P_N^*$. Then we linearise by setting $P_j = P_j^* + r_j$, which results in
\begin{align*}
	\frac{d}{dt} r_1 & = \frac{D a^2}{m \sqrt{m}} \frac{1}{3} \left[ \alpha_1 r_2 - (\alpha_j + \beta_j) r_1 -  \beta_j \gamma_1 r_1 \right]; \\
	\frac{d}{dt} r_j & = \frac{D a^2}{m \sqrt{m}} \frac{1}{3} \left[ \alpha_j r_{j+1} - (\alpha_j+\beta_j) r_j + \beta_j r_{j-1} \right]; & (j = 2, \ldots, N_1)\\
	\frac{d}{dt} r_N & = \frac{D a^2}{m \sqrt{m}} \frac{1}{3} \left[ - \alpha_N \gamma_N r_N - (\alpha_N + \beta_N) r_N + \beta_N r_{N-1} \right],\\
\end{align*}
where
\begin{align*}
	\alpha_j 	& := R_+(P_{j+1}^*-P_{j}^*)R'_+(P_{j+1}^*-P_{j}^*)	&
	\beta_j 		& := R_-(P_j^* - P_{j-1}^*)R'_-(P_j^* - P_{j-1}^*)		&
	C 			& := \frac{D a^2}{m\sqrt{m}} \frac{1}{3}				
\end{align*}\begin{align*}
	\gamma_1 	& := - \frac{d}{dP_1} P_0(P_1^*)						&
	\gamma_N 	& := - \frac{d}{dP_N} P_{N+1}(P_N^*)
\end{align*}
Note that the function $R_+$ is positive and increasing, $R_-$ is negative and decreasing, $P_0$ is decreasing and $P_{N+1}$ is decreasing. Therefore $\alpha_j > 0$, $\beta_j > 0$, $\gamma_1 > 0$ and $\gamma_N > 0$.

We then define $\vec{r} := \left(r_1,\ldots,r_N\right)^T$ and rewrite the linearised equation as
\begin{equation*}	\frac{d}{dt} \vec{r} = C M \vec{r}, \end{equation*}
where
\begin{equation*}
	M = \left( \begin{array}{cccccc}
-(\alpha_1+\beta_1) - \gamma_1 \beta_1 	&	\alpha_1			& 0			& \cdots & \cdots 	& 0 			\\
\beta_2 									& -(\alpha_2+\beta_2) 	& \alpha_2	& \ddots & 			& \vdots 	\\
0 										& \ddots 				& \ddots 	& \ddots & \ddots 	& \vdots 	\\
\vdots 									& \ddots 				& \ddots 	& \ddots & \ddots 	& 0 			\\
\vdots 									& 						& \ddots 	& \beta_{N-1} & -(\alpha_{N-1}+\beta_{N-1}) & \alpha_{N-1} \\
0 & \cdots & \cdots & 0 & \beta_N & - (\alpha_N + \beta_N) - \gamma_N \alpha_N
\end{array}\right)
\end{equation*}
Because of the structure of $M$, the Gershgorin circle theorem~\cite{Gerschgorin} immediately indicates that all eigenvalues lie in a Greschgorin disc. Because $M$ is weak diagonal dominant, the only non-negative eigenvalue that is not yet excluded is $\lambda = 0$. The rest of this proof consists of proving that $\lambda = 0$ cannot be an eigenvalue.

If $\lambda = 0$ would be an eigenvalue then there is an eigenvector $\vec{x} = \left(x_1,\ldots,x_N\right)^T \neq 0$ such that $M \vec{x} = 0$. This vector needs to satisfy
\begin{align}
	- (\alpha_1 + \beta_1 + \gamma_1 \beta_1) x_1 + \alpha_1 x_2 			& = 0\\
	\beta_j x_{j-1} - (\alpha_j+\beta_j) x_j + \alpha_j x_{j+1} 				& = 0 \hspace{1cm} (j=2,\ldots,N_1) \\
	\beta_N x_{N-1} - (\alpha_N + \beta_N + \gamma_N \alpha_N) x_N 		& = 0 \label{eq:appEVcond}
\end{align}
From the first $N-1$ of these expressions one can formulate each $x_j$ in terms of $x_1$. We find $x_j = \delta_j x_1$, with
\begin{align*}
\delta_{j+1} & = \delta_j + \frac{\beta_j}{\alpha_j}(\delta_j - \delta_{j-1}), &
\delta_1 & = 1, &
\delta_2 & = 1 + \frac{\beta_1}{\alpha_1}\left(1+\gamma_1\right) > \delta_1.
\end{align*}
One might easily verify that $\delta_j > \delta_{j-1}$ for all $j$.

Finally, if $\vec{x}$ is an eigenvector it should also satisfy the $N$-th expression~\eqref{eq:appEVcond}. Substitution of the found expressions results in the condition
\begin{equation*}
\left[\beta_N (\delta_{N_1}-\delta_N) - (1+\gamma_N)\alpha_N \delta_N \right] = 0.
\end{equation*}
Because $\delta_N > \delta_{N_1}$ the left-hand side of this equation is always negative. Therefore this condition can never be fulfilled and hence $\lambda = 0$ cannot be an eigenvalue of $M$. Thus all eigenvalues of $M$ need to be negative and the fixed point is thus stable under the flow of the ODE.
\end{proof}

\end{document}